\numberwithin{equation}{section}
\newtheorem{theorem}{Theorem}[section]
\newtheorem{lemma}[theorem]{Lemma}
\newtheorem{corollary}[theorem]{Corollary}
\newtheoremstyle{example}{\topsep}{\topsep}%
  {}%         Body font
  {}%         Indent amount (empty = no indent, \parindent = para indent)
  {\bfseries}% Thm head font
  {}%        Punctuation after thm head
  {\newline}%     Space after thm head (\newline = linebreak)
	{\thmname{#1}\thmnumber{ #2}\thmnote{ #3}}%         Thm head spec
\theoremstyle{example}
\newtheorem{example}{Example}[section]
\newcommand{\pd}[2]{\frac{\partial#1}{\partial#2}} % partial derivatives quicker.
\newcommand{\E}{\mathbb{E}} % Expectation E
\newcommand{\p}{\mathbb{P}} % Probability P
\newcommand{\eps}{\varepsilon} % Epsilon
\DeclareMathOperator{\arccot}{arccot}
\begin{document}

%%%%%%%%%%%%%%% TITLE %%%%%%%%%%%%%%%

\title{Modeling Flocks and Prices: Jumping Particles with an Attractive Interaction}

\author{
	M\'arton Bal\'azs
	\thanks{Institute of Mathematics, Budapest University of Technology; \texttt{balazs@math.bme.hu}; research partially supported by the Hungarian Scientific Research Fund (OTKA) grants K60708, F67729, K100473, by T\'AMOP - 4.2.2.B-10/1-2010-0009, and by the Bolyai Scholarship of the Hungarian Academy of Sciences.}
	\and
	Mikl\'os Z. R\'acz
	\thanks{Department of Statistics, University of California, Berkeley; \texttt{racz@stat.berkeley.edu}; most of this work was done while the author was at the Institute of Mathematics, Budapest University of Technology.}
	\and
	B\'alint T\'oth
	\thanks{Institute of Mathematics, Budapest University of Technology; \texttt{balint@math.bme.hu}; research partially supported by the Hungarian Scientific Research Fund (OTKA) grant K60708, K100473 and by T\'AMOP - 4.2.2.B-10/1-2010-0009.}
	}

\date{\today}

\maketitle

\begin{abstract}
We introduce and investigate a new model of a finite number of particles jumping forward on the real line. The jump lengths are independent of everything, but the jump rate of each particle depends on the relative position of the particle compared to the center of mass of the system. The rates are higher for those left behind, and lower for those ahead of the center of mass, providing an attractive interaction keeping the particles together. We prove that in the fluid limit, as the number of particles goes to infinity, the evolution of the system is described by a mean field equation that exhibits traveling wave solutions. A connection to extreme value statistics is also provided.
\end{abstract}

\noindent {\bf Keywords:}
Competing particles, center of mass, mean field evolution, traveling wave, fluid limit, extreme value statistics

\bigskip\noindent
{\bf 2010 Mathematics Subject Classification:}
Primary: 60K35; Secondary: 60J75

%%%%%%%%%%%%%%% TITLE %%%%%%%%%%%%%%%
\section{Introduction}

Interacting particle systems always show interesting behavior and represent a challenge for rigorous treatment. In this paper we introduce and investigate a new type of model which---to the best of our knowledge---has not emerged in the literature before. The system consists of a finite number of particles moving forward on the real line $\mathbb R$. The center of mass of these particles is computed and, given a configuration, each particle jumps after an exponential waiting time that depends on its relative position compared to the center of mass. The more a particle lags behind the center of mass, the less its mean waiting time is; on the other hand, the more a particle is ahead of the center of mass, the more its mean waiting time is. When the exponential time has passed, the particle jumps forward a random distance, independent of everything else, drawn from a probability distribution concentrated on the non-negative half-line. While doing so, it naturally moves the center of mass forward by $1/n$ times its jump length as well, where $n$ is the number of particles. Thus the dynamics has an effect of keeping the particles together, while randomness creates fluctuations in the system.

Several real-life phenomena can be modeled this way: the evolution of prices set by agents trying to sell the same product in a market, the position of goats in a herd, etc. Mathematical finance is an area of active research, and many processes arising in finance are modeled with interacting particle systems, see e.g.~the monograph on stochastic portfolio theory by Fernholz \cite{fernholz2002stochastic}, Banner, Fernholz and Karatzas \cite{banner2005atlas}, and the survey by Fernholz and Karatzas \cite{fernholz2008stochastic}. Herding behavior has drawn serious interest in the physics literature, a few examples are Vicsek, Czir\'ok, Ben-Jacob, Cohen and Shochet \cite{vicsek1995ntp}, Czir\'ok, Barab\'asi and Vicsek \cite{czirok1999cms}, Ballerini, Cabibbo, Candelier, Cavagna, Cisbani, Giardina, Lecomte, Orlandi, Parisi, Procaccini, Viale and Zdravkovic \cite{ballerini2008cir}.

The problem of competing individuals is interesting even without dependence of the dynamics on the actual position of the particles. A series of papers have been written on models where the competing particles evolve according to i.i.d.~increments: Ruzmaikina and Aizenman \cite{ruzmaikina2005characterization}, Arguin \cite{arguin2008competing}, Arguin and Aizenman \cite{arguin2009structure}, Shkolnikov \cite{shkolnikov2009competing}. The main point of investigation in these papers is characterizing point processes for which the joint distribution of the gaps between the particles is invariant under the evolution.

Several authors have considered continuous models in which the motion (drift) of the particles depends on their rank in the spatial ordering; see e.g.~Banner, Fernholz and Karatzas \cite{banner2005atlas}, Pal and Pitman \cite{pal2008one}, Chatterjee and Pal \cite{chatterjee2008phase}, Shkolnikov \cite{shkolnikov2010competing,shkolnikov2010large,shkolnikov2011large}. These models are central objects of study in modeling capital distributions in equity markets. Branching diffusion-type models have also been investigated with a linear interaction between particles, see e.g.\ Greven and Hollander \cite{greven2007phase}, Engl\"ander \cite{englander2010center} and the references therein. Linearity of the interaction is an important ingredient in these papers making the analysis somewhat easier.

Closer to the present work are jump processes with interaction between the jumping particles. A related model is introduced in ben-Avraham, Majumdar and Redner \cite{benavraham2007tmr}, where particles compete with a leader who looks back only at the contestant directly behind him. Interaction between the leader and the followers is realized via a linear dependence of the mean jump length on the relative position of the leader and the followers. A process with rank-dependent jump rates is investigated in Greenberg, Malyshev and Popov \cite{greenberg1995stochastic}, and one with similar, but randomized jump rates in Manita and Shcherbakov \cite{manita2005asymptotic}. Maybe the closest model to our system is the one featured in Grigorescu and Kang \cite{grigorescu2008steady}, where particles jump in a configuration-dependent manner, and the authors give special focus to the situation where the dependence of particle advances on the configuration is via the center of mass of the particles.

In our case, positive jump lengths are independently chosen, and the jump rate of a particle depends on its position relative to the center of mass. This \emph{jump rate function} is non-increasing and non-negative, hence clearly cannot be linear in nontrivial cases. The setting is thus genuinely different from situations in the above examples. The non-increasing jump rate function creates a monotonicity in the system which keeps the particles together. This gives rise to a traveling wave-type stationary distribution. We address the following questions.
\begin{itemize}
\item The two-particle case is, of course, tractable. We compute the stationary distribution of the gap between the two particles for several choices of the jump length distribution. However, we cannot handle the situation even for three particles. (The process is not reversible.)
\item If the number of particles goes to infinity, we expect that the motion of the center of mass converges to a deterministic one. This motion, together with the distribution of the particles, satisfy a deterministic integro-differential equation in this so-called \emph{fluid limit}. We call the resulting equation the mean field equation. We write up this equation by heuristic arguments and, in some cases, we solve it for a traveling wave form, i.e.\ the stationary solution as seen from the center of mass. Even finding the traveling wave solution to this mean field equation is nontrivial. The reason for expecting a deterministic limiting equation is the averaging property of the dynamics. We in fact prove this phenomena in this paper later on for some cases. As the motion of the center of mass becomes deterministic, the particles decouple, each independently following the deterministically moving center of mass. Therefore in the limit it does not matter if we consider a deterministic cloud of particles (the limit coming from the empirical distribution of $n\to\infty$ particles), or a single particle in the corresponding distribution that is driven by the deterministic center of mass. The evolution of the fluid limit can therefore be called a mean field dynamics. Of course, the motion of the center of mass must be chosen appropriately to be consistent with the cloud or distribution of particles. In the traveling wave case, the distribution, as seen from the center of mass, becomes stationary, and the speed of the center of mass becomes constant in time.
\item Some traveling wave solutions lead us to Gumbel distributions, which naturally pose the question whether an extreme value theory interpretation can be given to the process. An idea by Attila R\'akos shows that there is indeed such a connection.
\item Under restrictive assumptions on the jump rates, we prove that
the evolution of the process in the fluid limit indeed satisfies the deterministic integro-differential equation we formulated by heuristic arguments. We use weak convergence methods and uniqueness results on the integro-differential equation. Since the center of mass is not a bounded function of the particle positions, standard weak convergence techniques need to be extended to our situation.
\end{itemize}

This last point is the most interesting from a mathematical point of view. Our proof is a standard fluid limit argument, somewhat similar to those in Manita and Shcherbakov \cite{manita2005asymptotic} and Grigorescu and Kang \cite{grigorescu2008steady}: we use an extended notion of weak convergence, tightness, and martingales to show convergence to the unique solution of the limiting integro-differential equation. As opposed to Greenberg, Malyshev and Popov \cite{greenberg1995stochastic}, asymptotic independence of the particles (as the number of particles tends to infinity) will be a consequence of the proof, rather than a method to rely on. This is because our case of dependence through the center of mass does not seem to provide an easy a priori way of showing asymptotic independence of the particles.

We start by precisely describing the model in Section \ref{sec:model}. We formulate our main results in Section \ref{sec:results}:  first, we formulate the limiting mean field equation together with some traveling wave solutions, and we also state the fluid limit theorem. These are the results we believe to be the most important, and additional statements can also be found in subsequent sections. We end this section with a collection of open questions. We proceed by analyzing the two-particle case in Section \ref{sec:two_particle}. The mean field equation is investigated in Section \ref{sec:mf}. In particular, a connection with extreme value statistics is heuristically explained in Section \ref{sec:gumbel}. Section \ref{sec:fluid_limit} contains our fluid limit argument under some restrictive conditions on the jump rates, and finally Section \ref{sec:sim} supports, via simulation results, the idea that the fluid limit should hold in other cases as well.

\section{The model}\label{sec:model}

To model the phenomena described in the Introduction, we present the following model. The model consists of $n$ particles on the real line. Let us denote the position of the particles by $x_{1}\left(t\right), x_{2}\left(t\right),\dots,x_{n}\left(t\right)$, where $t$ represents time and $x_{i}\left(t\right) \in \mathbb{R}$ for $i=1,\dots,n$. We now describe the dynamics. Given a configuration, the mean position (center of mass) of the particles is $m_{n}\left(t\right) = \frac{1}{n}\sum_{i=1}^{n}{x_{i}\left(t\right)}$. Now let us denote by $w : \mathbb{R} \to \mathbb{R^{+}}$ the \emph{jump rate function}, a positive and monotone decreasing function. The dynamics is a continuous-time Markov jump process. The $i^{\text{th}}$ particle, positioned at $x_{i}\left(t\right)$ at time $t$, jumps with rate $w\left(x_{i}\left(t\right) - m_{n}\left(t\right)\right)$. That is, conditioned on the configuration of the particles, jumps happen independently after an exponentially distributed time, with the parameter of the $i^{\text{th}}$ particle being $w\left(x_{i}\left(t\right) - m_{n}\left(t\right)\right)$. 

When a particle jumps, the length of the jump is a random positive number $Z$ from a specific distribution. This is independent of time and the position of the particle and all other particles as well. We scale the jump length so that $\E Z = 1$, and for technical reasons we assume that $Z$ has a finite third moment. If this distribution is absolutely continuous then its density is denoted by $\varphi$.

Our focus is on the \emph{empirical measure} of the $n$ particle process at time $t$:
\begin{equation*}
\mu_{n}\left( t \right) = \frac{1}{n} \sum\limits_{i=1}^{n} \delta_{x_{i} \left( t \right)}.
\end{equation*}
More generally, we are interested in the \emph{path} of the empirical measure, $\mu_{n} \left( \cdot \right) = \left( \mu_{n} \left( t \right) \right)_{t \geq 0}$, with initial condition $\mu_{n} \left( 0 \right) = \frac{1}{n} \sum_{i=1}^{n} \delta_{x_{i}\left( 0 \right)}$.

We introduce the following notation in order to abbreviate formulas, which is also well suited for weak convergence type arguments. For a function $f: \mathbb{R} \to \mathbb{R}$ and a probability measure $P$ on $\mathbb{R}$ let
\begin{equation*}
\left\langle f, P \right\rangle := \int\limits_{-\infty}^{\infty} f \left( x \right) P\left( dx \right).
\end{equation*}
We also use the following notation: $\left\langle f\left( x \right), P \right\rangle \equiv \left\langle f, P \right\rangle$. Using this notation, we have $m_n \left( t \right) = \left\langle x, \mu_n \left( t \right) \right\rangle$.

Now we can present the generator of the process acting on the integrated test functions as above. Let $C_{b}$ denote the space of bounded and continuous functions from $\mathbb{R}$ to $\mathbb{R}$, and let $Id$ denote the identity from $\mathbb{R}$ to $\mathbb{R}$: $Id \left( x \right) = x$. Define
\begin{equation}\label{eq:H_def}
H := \left\{ f \in C_{b}:\ \left|f\right| \leq 1 \right\} \cup \left\{Id\right\}.
\end{equation}
This set of functions will be important later on. For a function $f \in H$ we have:
\begin{equation*}
L \left\langle f, \mu_n \left( t \right) \right\rangle = \left\langle \left( \E \left( f \left( x + Z \right) \right) - f\left( x \right) \right) w\left( x - m_{n}\left( t \right) \right), \mu_{n} \left( t \right) \right\rangle.
\end{equation*}
The particular choice of $H$ is so that we can apply the generator $L$ to the center of mass $m_n \left( t \right)$.

\section{Results}\label{sec:results}

The case of a fixed number $n$ of particles is difficult to treat. Of course the $n = 2$ case is doable: in Section \ref{sec:two_particle} we look at two specific cases of the model, and in both we compute the stationary distribution of the gap between the two particles. At the end of the section we illustrate the difficulties that arise for more than two particles in Figure \ref{fig:haromszogek_sz}.

Our main interest is in the large $n$ limit (so called \emph{fluid limit}) of the particle system. Let us assume that the jump length distribution has a density $\varphi$. Heuristically, in the fluid limit the empirical measure $\mu_n \left( t \right)$ of the particles at time $t$ becomes an absolutely continuous measure with density $\varrho \left( \cdot, t \right)$ whose time evolution is given by the so-called mean field equation
\begin{equation}
\label{eq:master}
\pd{\varrho\left(x,t\right)}{t} = - w\left(x-m\left(t\right)\right) \varrho\left(x,t\right) + \int\limits_{-\infty}^{x} w\left(y-m\left(t\right)\right) \varrho\left(y,t\right) \varphi\left(x-y\right)dy,
\end{equation}
where
\begin{equation}
\label{eq:mean}
m\left(t\right) = \int\limits_{-\infty}^{\infty} x \varrho\left(x,t\right) dx
\end{equation}
is the mean of the probability density $\varrho\left( \cdot ,t\right)$. The first term on the right hand side of \eqref{eq:master} accounts for the particles hopping forward from position $x$ at time $t$: at time $t$ there is a density of $\varrho\left(x,t\right)$ particles at $x$, who jump forward with rate $w\left(x-m\left(t\right)\right)$. The second, integral term on the right hand side of \eqref{eq:master} accounts for the particles who are at position $y < x$ jumping forward at time $t$ to position $x$: at time $t$ there is a density of $\varrho\left(y,t\right)$ particles at $y$, who jump forward with rate $w\left(y-m\left(t\right)\right)$, and need to jump forward exactly $x-y$ to land at $x$, the probability of this being proportional to $\varphi\left(x-y\right)$.

Our first results concern solutions to the mean field equation \eqref{eq:master} in the form of a traveling wave
\begin{equation}
\label{eq:TravellingWave}
\varrho(x,t)=\rho(x-ct),
\end{equation}
where $c$ is the constant velocity of the wave. Traveling waves of this form represent stationary distributions as seen from the center of mass, which travels at a constant speed $c$. Substituting \eqref{eq:TravellingWave} into the mean field equation \eqref{eq:master}, and then writing $x$ instead of $x-ct$ for simplicity everywhere, we arrive at the following equation:
\begin{equation}
\label{eq:stationary}
-c\rho'\left(x\right) = -w\left(x\right)\rho\left(x\right)+\int\limits_{-\infty}^{x} w\left(y\right)\rho\left(y\right)\varphi\left(x-y\right)dy.
\end{equation}
Recall that the jump rate function $w$ is a non-increasing function; any non-constant $w$ gives a strong enough attraction to keep the particles together, i.e.\ there is a probability density traveling wave solution $\rho$. This is formulated in the following theorem for exponentially distributed jump lengths.
\begin{theorem}\label{thm:exp_stac}
In the specific case when the jump length distribution is exponential with mean one, the solution to \eqref{eq:stationary} for general jump rate function $w$ is
\begin{equation}
\label{eq:stationary_dist}
\rho\left(x\right) = Ke^{\int\limits_{0}^{x} \left(\frac{1}{c}w\left(s\right)-1\right) ds},
\end{equation}
where $K$ is a constant. If $w$ is non-constant then for an appropriate normalizing constant $K$, $\rho$ of \eqref{eq:stationary_dist} is a probability density with at least exponentially decreasing tails. In our model the speed of the wave is determined by the fact that $\rho$ is centered, thus $c$ is the unique solution of
\begin{equation}
\label{eq:speed_derivation}
\int\limits_{-\infty}^{\infty} x e^{\int\limits_{0}^{x}\left(\frac{1}{c}w\left(s\right)-1\right)ds} dx = 0.
\end{equation}
\end{theorem}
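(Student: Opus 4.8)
The plan is to establish the three assertions in turn: that \eqref{eq:stationary_dist} solves \eqref{eq:stationary}, that for non-constant $w$ it can be normalized to a probability density with at least exponentially decreasing tails, and that the centering equation \eqref{eq:speed_derivation} has a unique solution. The feature that makes the exponential case tractable is that $\varphi(z)=e^{-z}\mathbf{1}_{\{z>0\}}$ turns the convolution term into something that telescopes after one integration.

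\emph{Verification.} First I would note that \eqref{eq:stationary_dist} is precisely the solution of the first-order linear ODE $\rho'(x)=\bigl(\tfrac1c w(x)-1\bigr)\rho(x)$, equivalently $w(x)\rho(x)=c\bigl(\rho(x)+\rho'(x)\bigr)=c\,e^{-x}\tfrac{d}{dx}\!\bigl(e^{x}\rho(x)\bigr)$. Hence, for the exponential density,
\begin{equation*}
\int_{-\infty}^{x}w(y)\rho(y)\varphi(x-y)\,dy= c\,e^{-x}\int_{-\infty}^{x}\frac{d}{dy}\!\bigl(e^{y}\rho(y)\bigr)\,dy= c\rho(x)-c\,e^{-x}\lim_{y\to-\infty}e^{y}\rho(y),
\end{equation*}
and the boundary term vanishes since $e^{y}\rho(y)=K\exp\bigl(\tfrac1c\int_0^{y}w\bigr)\to 0$ as $y\to-\infty$ (because $\int_0^{y}w\le -|y|\,w(0)\to-\infty$, $w$ being positive and non-increasing). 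Substituting into the right-hand side of \eqref{eq:stationary} leaves $-w(x)\rho(x)+c\rho(x)=-(w(x)-c)\rho(x)=-c\rho'(x)$, which is the left-hand side. For the converse --- so that \eqref{eq:stationary_dist} is \emph{the} probability-density solution --- I would differentiate \eqref{eq:stationary} once, using $\tfrac{d}{dx}\int_{-\infty}^{x}u(y)e^{-(x-y)}dy=u(x)-\int_{-\infty}^{x}u(y)e^{-(x-y)}dy$ and eliminating the remaining integral via \eqref{eq:stationary} itself, reducing the equation to $c\rho''+c\rho'=(w\rho)'$; integrating once and using that any probability density solution must satisfy $\rho,\rho',w\rho\to 0$ at $\pm\infty$ forces the integration constant to be zero, recovering the ODE above.

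\emph{Normalizability and tails.} Writing $W(x):=\int_0^{x}w$, we have $\rho(x)=K e^{W(x)/c-x}$ and $\rho'(x)/\rho(x)=\tfrac1c w(x)-1\to\tfrac1c w(\pm\infty)-1$ as $x\to\pm\infty$. Thus $\rho$ is integrable with a finite mean precisely when $w(+\infty)<c<w(-\infty)$; on the interval $I:=(w(+\infty),w(-\infty))$, which is nonempty and contained in $(0,\infty)$ exactly because $w$ is non-increasing and non-constant, $\rho$ decays at least exponentially at $+\infty$ and at least exponentially (super-exponentially if $w$ is unbounded) at $-\infty$, and can be normalized to a probability density.

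\emph{Uniqueness of the speed.} For $c\in I$ let $\rho_c$ be the normalized density; the sign of $F(c):=\int_{-\infty}^{\infty}x\,e^{W(x)/c-x}\,dx$ equals that of $\E_{\rho_c}[X]$. Differentiating under the integral sign, $\tfrac{d}{dc}\E_{\rho_c}[X]=-\tfrac{1}{c^{2}}\operatorname{Cov}_{\rho_c}\!\bigl(X,W(X)\bigr)<0$, since $W$ is strictly increasing, so $X$ and $W(X)$ are comonotone and non-degenerate under $\rho_c$; hence $c\mapsto\E_{\rho_c}[X]$ is continuous and strictly decreasing on $I$. It then remains to check the endpoint behaviour: as $c\downarrow w(+\infty)$ the right-tail exponent $\tfrac1c w(+\infty)-1$ increases to $0$, mass escapes to $+\infty$, and $\E_{\rho_c}[X]\to+\infty$; as $c\uparrow w(-\infty)$ (or $c\to\infty$ when $w(-\infty)=\infty$) the location $\{x:w(x)=c\}$ of the mode of $\rho_c$, and with it the bulk of the mass, runs off to $-\infty$, so $\E_{\rho_c}[X]\to-\infty$. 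By the intermediate value theorem $F$ has a unique zero $c^{\ast}\in I$, i.e.\ \eqref{eq:speed_derivation} has a unique solution, which is the wave speed. The main obstacle I anticipate is making these endpoint asymptotics fully rigorous (together with the standard but not entirely free justification of differentiating under the integral sign); the conceptual heart --- the telescoping identity for the convolution --- is specific to the memoryless exponential jump length and is where the explicit closed form comes from.
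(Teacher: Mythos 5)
Your argument is correct, and its core --- differentiating the equation and exploiting the memorylessness of the exponential jump density to collapse the convolution into a first-order linear ODE --- is the same as the paper's. The differences are in the bookkeeping and in how much is actually proved. To fix the integration constant the paper integrates the reduced equation over $\mathbb{R}$, invoking conservation of total probability and the speed identity $c = \int w\rho$ from \eqref{eq:speed_general}; you instead impose the boundary conditions $\rho, \rho', w\rho \to 0$ at $\pm\infty$, an equivalent route with a slightly different flavor of regularity assumption (yours requires knowing the decay of $w\rho$, the paper's requires knowing $\int w\rho$ is finite and equal to $c$). Your derivation of the constraint $w(+\infty) < c < w(-\infty)$ comes from demanding normalizability of the explicit formula, whereas the paper obtains the same bounds from \eqref{eq:speed_general} because $\rho$ is a probability density; both work. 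Where you go genuinely beyond the paper is the uniqueness of the wave speed: the published proof merely asserts that \eqref{eq:speed_derivation} has a unique root, while you supply a reason --- $c \mapsto \E_{\rho_c}[X]$ is strictly decreasing by the covariance identity $\frac{d}{dc}\E_{\rho_c}[X] = -\frac{1}{c^2}\operatorname{Cov}_{\rho_c}\bigl(X, W(X)\bigr) < 0$ (since $W' = w > 0$ makes $X$ and $W(X)$ strictly comonotone), combined with sign changes at the endpoints of the admissible interval. That monotonicity step is a real addition to what the paper gives. As you flag yourself, the endpoint asymptotics (mass escaping to $+\infty$ as $c\downarrow w(+\infty)$, mode running to $-\infty$ as $c\uparrow w(-\infty)$) still need a careful limit argument and the differentiation under the integral should be justified, but the reduction is sound and makes the uniqueness claim much less of a leap of faith than in the paper.
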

There are several specific jump rate functions $w$ for which the resulting stationary distribution is of interest. In particular, the case of $w \left( x \right) = e^{- \beta x }$, where $\beta > 0$, leads to the generalized Gumbel distribution:
\begin{corollary}\label{cor:gumbel}
If $w\left(x\right)=e^{-\beta x}$, where $\beta > 0$, then from Theorem \ref{thm:exp_stac} we obtain for the stationary density
\begin{equation}
\label{eq:rhobeta1.1}
\rho_{\beta}\left(x\right) = \frac{\beta}{\Gamma\left(\frac{1}{\beta}\right)}e^{-\left(x-\frac{\psi\left(\frac{1}{\beta}\right)}{\beta}\right)-e^{-\beta\left(x-\frac{\psi\left(\frac{1}{\beta}\right)}{\beta}\right)}},
\end{equation}
where $\psi$ is the digamma function:
\begin{equation*}
\psi\left(x\right) = \frac{\Gamma'\left(x\right)}{\Gamma\left(x\right)}.
\end{equation*}
Specifically for $\beta = 1$ the stationary distribution is the centered standard Gumbel distribution.
The velocity of the wave is
\begin{equation*}
c=\frac{1}{\beta}e^{-\psi\left(\frac{1}{\beta}\right)}.
\end{equation*}
\end{corollary}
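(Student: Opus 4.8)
The plan is to obtain this as a direct computation from Theorem~\ref{thm:exp_stac}, with the only real inputs being one clever substitution and two standard Gamma-function identities. First I would substitute $w(s)=e^{-\beta s}$ into \eqref{eq:stationary_dist}. Since $\int_0^x e^{-\beta s}\,ds=(1-e^{-\beta x})/\beta$, the exponent in \eqref{eq:stationary_dist} becomes $\frac{1}{c\beta}\left(1-e^{-\beta x}\right)-x$, so that
\[
\rho(x)=K\,e^{1/(c\beta)}\,e^{-x-\frac{1}{c\beta}e^{-\beta x}}.
\]
Introducing the shift $a$ defined by $e^{\beta a}=1/(c\beta)$, equivalently $a=-\tfrac1\beta\ln(c\beta)$, this rewrites as $\rho(x)=\widetilde K\,e^{-(x-a)-e^{-\beta(x-a)}}$ for a new constant $\widetilde K$. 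Thus $\rho$ already has the functional shape of a generalized Gumbel density, and it remains only to pin down $\widetilde K$ and $a$ (equivalently $c$).

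For the normalizing constant I would use the substitution $u=e^{-\beta(x-a)}$: writing $y=x-a$, one has $dy=-du/(\beta u)$ and $e^{-y}=u^{1/\beta}$, so that $\int_{-\infty}^{\infty}e^{-(x-a)-e^{-\beta(x-a)}}\,dx=\frac1\beta\int_0^\infty u^{1/\beta-1}e^{-u}\,du=\frac1\beta\Gamma(1/\beta)$. Hence $\widetilde K=\beta/\Gamma(1/\beta)$, which already matches the prefactor in \eqref{eq:rhobeta1.1}.

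To determine $c$ I would impose that $\rho$ is centered, i.e.\ \eqref{eq:speed_derivation}. After the shift $y=x-a$ this reads $a+\frac{\beta}{\Gamma(1/\beta)}\int_{-\infty}^{\infty}y\,e^{-y-e^{-\beta y}}\,dy=0$. Applying the same substitution $u=e^{-\beta y}$, now together with the classical identity $\int_0^\infty u^{s-1}e^{-u}\ln u\,du=\Gamma'(s)=\Gamma(s)\psi(s)$ at $s=1/\beta$, gives $\int_{-\infty}^{\infty}y\,e^{-y-e^{-\beta y}}\,dy=-\frac{1}{\beta^{2}}\Gamma(1/\beta)\psi(1/\beta)$, so the centering condition becomes $a=\psi(1/\beta)/\beta$. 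Combining this with $e^{\beta a}=1/(c\beta)$ yields $\psi(1/\beta)=-\ln(c\beta)$, i.e.\ $c=\frac1\beta e^{-\psi(1/\beta)}$; substituting $a=\psi(1/\beta)/\beta$ back into $\rho$ gives exactly \eqref{eq:rhobeta1.1}. Uniqueness of this $c$ among solutions of \eqref{eq:speed_derivation} is already provided by Theorem~\ref{thm:exp_stac}. For $\beta=1$ one has $\psi(1)=-\gamma$ (the Euler--Mascheroni constant), hence $a=-\gamma$, $c=e^{\gamma}$, and $\rho_1(x)=e^{-(x+\gamma)-e^{-(x+\gamma)}}$ is the standard Gumbel density recentered to have mean zero.

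I do not expect any genuine obstacle: this is a corollary precisely because the analytic difficulties---existence of the traveling wave, the closed form \eqref{eq:stationary_dist}, and uniqueness of the speed---are settled in Theorem~\ref{thm:exp_stac}. The only points needing mild care are recognizing that the substitution $u=e^{-\beta(x-a)}$ collapses every integral to $\Gamma$ and its logarithmic derivative $\Gamma'=\Gamma\psi$, and checking integrability: the density \eqref{eq:rhobeta1.1} has a doubly-exponentially small left tail and an exponentially small right tail, so the normalization and first-moment integrals above all converge and the manipulations are justified.
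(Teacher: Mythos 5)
Your proposal is correct and follows essentially the same route as the paper's Example~\ref{ex:exp}: substitute $w(s)=e^{-\beta s}$ into \eqref{eq:stationary_dist}, use the substitution $u=e^{-\beta(x-a)}$ (the paper's $z=e^{-\beta y}$) to reduce the normalization integral to $\Gamma(1/\beta)$, and use $\int_0^\infty u^{s-1}e^{-u}\ln u\,du=\Gamma(s)\psi(s)$ to impose the centering condition and solve for $c$. The only cosmetic difference is that you isolate the shift $a=-\frac1\beta\log(c\beta)$ explicitly before computing, whereas the paper carries $\log(c\beta)/\beta$ along; the computations and identities invoked are identical.
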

The Gumbel distribution often arises in extreme value theory (Gumbel \cite{gumbel}), e.g.\ this is the limiting distribution of the rescaled maximum of a large number of independent identically distributed random variables drawn from a distribution that has no upper bound and decays faster than any power law at infinity (e.g.\ exponential or Gaussian distributions). More generally, the $k$th largest value, once rescaled, follows the generalized Gumbel distribution (Bertin \cite{bertin2005gfgs}, Clusel and Bertin \cite{clusel2008global}, Gumbel \cite{gumbel}). Therefore the question arises whether there is an underlying extremal process in the dynamics. It turns out that there is a connection between the mean field model and extreme value theory: this is discussed in Section \ref{sec:gumbel}.

The validity of the mean field equation is strongly supported by simulation results for various special cases of the model, see Section \ref{sec:sim}. We expect this to be true in quite some generality (for fairly general jump rate functions and jump length distributions).

 The main result of the paper is to rigorously validate this approximation. That is, our goal is to show that in the fluid limit---when the number of particles tends to infinity---the mean field equation \eqref{eq:master} holds. However, when trying to rigorously prove this, technical difficulties may arise. Therefore we impose some restrictions on the jump rate function and the jump length distribution. Our main restriction is that we suppose that $w$ is bounded: $w \left( x \right) \leq a$. We also impose some additional (smoothness) restrictions on $w$, but boundedness is the main restriction. Our restrictions on the jump length distribution are light: we assume that it has a finite third moment.

In order to precisely state our main theorem, some preparations are needed. First notice that the mean field equation \eqref{eq:master} can be rewritten as the evolution of a probability measure $\mu \left( \cdot \right)$ using test functions from $H$. Moreover, we do not have to assume that the jump length distribution has a density either.
Define
\begin{equation}\label{eq:A_def}
\begin{aligned}
A_{t,f} \left( \mu\left( \cdot \right) \right) :&= \left\langle f, \mu \left( t \right) \right\rangle - \left\langle f, \mu \left( 0 \right) \right\rangle - \int\limits_{0}^{t} \left\langle \left( \E \left( f \left( x + Z \right) \right) - f\left( x \right) \right) w\left( x - m\left( s \right) \right), \mu \left( s \right) \right\rangle ds\\
&= \left\langle f, \mu \left( t \right) \right\rangle - \left\langle f, \mu \left( 0 \right) \right\rangle - \int\limits_{0}^{t} L \left\langle f, \mu \left( s \right) \right\rangle  ds
\end{aligned}
\end{equation}
where
\begin{equation*}
%m\left( t \right) = \int x \mu \left( t, dx \right).
m\left( t \right) = \left\langle x, \mu \left( t \right) \right\rangle.
\end{equation*}
We say that $\mu\left( \cdot \right)$ satisfies the mean field equation if
\begin{equation}\label{eq:Atf}
A_{t,f} \left( \mu \left( \cdot \right) \right) = 0
\end{equation}
for every $t \geq 0$ and $f \in H$.

The position of the center of mass of the particles plays an important role in our model, and therefore we always assume it exists (and is finite). Consequently, it is natural to look at $\mu_n \left( t \right)$ as an element of $\mathcal{P}_1 \left(\mathbb{R}\right)$, the space of probability measures on $\mathbb{R}$ having a finite first (absolute) moment. In order to talk about convergence in $\mathcal{P}_1 \left( \mathbb{R} \right)$ we need to define a metric on $\mathcal{P}_1 \left( \mathbb{R} \right)$. We use the so-called \emph{1-Wasserstein metric}. For $\mu, \nu \in \mathcal{P}_{1} \left( \mathbb{R} \right)$ define the set of coupling measures
\begin{equation*}
\Pi \left( \mu, \nu \right) := \left\{ \pi \in M_{1} \left( \mathbb{R} \times \mathbb{R} \right)\ :\ \pi \left( \cdot \times \mathbb{R} \right) = \mu, \pi \left( \mathbb{R} \times \cdot \right) = \nu \right\},
\end{equation*}
where $M_1 \left( \mathbb{R} \times \mathbb{R} \right)$ denotes the space of probability measures on $\mathbb{R} \times \mathbb{R}$. The \emph{$1$-Wasserstein metric} on the space $\mathcal{P}_{1} \left( \mathbb{R} \right)$ is defined as
\begin{equation}\label{eq:was_metric}
d_{1} \left( \mu, \nu \right) := \inf \left\{ \int_{\mathbb{R} \times \mathbb{R}} \left| x - y \right| \pi \left( dx, dy \right)\ :\ \pi \in \Pi \left( \mu, \nu \right) \right\}
\end{equation}
for $\mu, \nu \in \mathcal{P}_{1} \left( \mathbb{R} \right)$. Much is known about the metric space $\left( \mathcal{P}_{1} \left( \mathbb{R} \right), d_{1} \right)$; we will state what we need later.

The natural space in which to consider $\mu_{n} \left( \cdot \right)$ is the Skorohod space $D\left( [0,\infty), \mathcal{P}_{1} \left( \mathbb{R} \right) \right)$: we know that $\mu_{n} \left( \cdot \right)$ is a piece-wise constant function in $\mathcal{P}_{1} \left( \mathbb{R} \right)$, having jumps whenever a particle jumps. Similarly, for any test function $f$ (for instance $f \in H$), the natural space in which to consider $\left\langle f, \mu_{n} \left( \cdot \right) \right\rangle$ is the Skorohod space $D\left( [0,\infty), \mathbb{R} \right)$.

In order to prove weak convergence of $\left\{ \mu_{n} \left( \cdot \right) \right\}_{n\geq 1}$ in the appropriate Skorohod space, we have to make the following assumptions on the initial distribution of the particles.

\textbf{Assumption 1.} Our first assumption is that
\begin{equation}\label{eq:assumption1}
\lim\limits_{L \to \infty} \limsup\limits_{n \to \infty} \p \left( \frac{1}{n} \sum\limits_{i=1}^{n} \left| x_{i}\left( 0 \right) \right| > L \right) = 0.
\end{equation}
%A sufficient condition for this assumption to hold is that there exists an $M < \infty$ such that $\E\left( \left| x_{i} \left( 0 \right) \right| \right) \leq M$ for all $i$.

\textbf{Assumption 2.} Our second assumption is that there exists a constant $\hat{c}$ such that for every $\eps > 0$ 
\begin{equation*}
\sup\limits_{n} \p \left( \frac{1}{n} \sum\limits_{i=1}^{n} \mathbf{1} \left[ \left| x_{i} \left( 0 \right) \right| > \hat{c} \eps^{-2/3} \right] \geq \eps / 4 \right) \leq \eps  /3.
\end{equation*}
%In particular, Assumption 2 is satisfied if the distribution of $\left| x_{i} \left( 0 \right) \right|$ is not too heavy-tailed.
(Note: the exponent 2/3 has no special significance. This is simply because we assume $\E \left( Z^3 \right) < \infty$ for simplicity. We could assume $\E \left( Z^{2+\epsilon} \right) < \infty$ for some small $\epsilon > 0$, in which case we could replace the exponent 2/3 with an exponent of $2/ \left( 2 + \epsilon \right)$ in Assumption 2.)

A sufficient and simple condition for both Assumption 1 and 2 to hold is that
\begin{equation*}
\sup_n \E \left( \frac{1}{n} \sum_{i=1}^{n} \left| x_i \left( 0 \right) \right|^{3} \right) < \infty.
\end{equation*}

\textbf{Assumption 3.} Our third assumption is that
\begin{equation}\label{eq:assumption3}
\mu_{n} \left( 0 \right) \Rightarrow_{n} \nu
\end{equation}
in $\mathcal{P}_{1}\left(\mathbb{R}\right)$, where $\nu \in \mathcal{P}_{1}\left( \mathbb{R} \right)$ is a deterministic initial profile and $\Rightarrow_{n}$ denotes weak convergence.

We are now ready to state our main theorem.

\begin{theorem}\label{thm:main}
Suppose that the non-increasing jump rate function $w$ is bounded, and that it is differentiable with bounded derivative. Suppose the jump length $Z$ is scaled such that $\E Z = 1$ and that it has a finite third moment. Suppose that Assumptions 1, 2 and 3 from above hold. Then
\begin{equation}%\label{eq:weak_convergence_of_mu_n}
\mu_{n} \left( \cdot \right) \Rightarrow_{n} \mu \left( \cdot \right)
\end{equation}
in $D\left( [0,\infty), \mathcal{P}_{1}\left( \mathbb{R} \right) \right)$, where $\mu\left( \cdot \right)$ is the unique deterministic solution to the mean field equation \eqref{eq:Atf} with initial condition $\nu$.
\end{theorem}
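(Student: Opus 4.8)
The plan is to follow the classical three-step recipe for fluid limits---tightness, identification of limit points, uniqueness---but adapted so that the unbounded test function $Id$ (i.e. the center of mass) can be handled. First I would establish the necessary a priori moment bounds. Using Assumption 1 together with the fact that each particle only jumps forward by increments distributed as $Z$ with $\E Z = 1$ and $\E Z^3 < \infty$, and that the total jump rate is bounded by $na$, one controls $\E \langle |x|, \mu_n(t)\rangle$ and higher moments uniformly on compact time intervals. This is where boundedness of $w$ is essential: it gives a crude but uniform bound $n a$ on the number of jumps per unit time, so that $\langle |x|,\mu_n(t)\rangle$ cannot blow up. Assumption 2 is the quantitative input that lets these moment bounds be turned into control of tails of $\mu_n(t)$ uniformly in $n$, which is exactly what is needed for relative compactness in $\mathcal{P}_1(\mathbb{R})$ (not merely in the weak topology): recall a set in $\mathcal{P}_1$ is relatively compact iff it is tight and uniformly integrable in the first moment.

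Next I would prove tightness of $\{\mu_n(\cdot)\}_{n\ge1}$ in $D([0,\infty),\mathcal{P}_1(\mathbb{R}))$. By standard criteria (e.g. Jakubowski's or Aldous's criterion together with a separating family of functions), it suffices to show tightness of the real-valued processes $\{\langle f,\mu_n(\cdot)\rangle\}_{n\ge1}$ in $D([0,\infty),\mathbb{R})$ for each $f\in H$, plus a compact-containment condition coming from the moment bounds above. For fixed $f\in H$ the process
\begin{equation*}
M^{n}_{t,f} := \langle f,\mu_n(t)\rangle - \langle f,\mu_n(0)\rangle - \int_0^t L\langle f,\mu_n(s)\rangle\,ds
\end{equation*}
is a martingale, and its predictable quadratic variation is $O(1/n)$ uniformly on compacts (each of the $\le na$ jumps per unit time changes $\langle f,\mu_n\rangle$ by $O(1/n)$, contributing $O(1/n^2)$ to the bracket, times $O(n)$ jumps). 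Hence $M^n_{\cdot,f}\to 0$ in probability, uniformly on compacts. Tightness of $\langle f,\mu_n(\cdot)\rangle$ then follows from tightness of the finite-variation compensator part, which is controlled by the boundedness of $w$ and $f$ (for $f=Id$, by the $\E Z=1$ increment bound and the moment estimates). The delicate point throughout is that $\langle Id,\mu_n\rangle$ is genuinely unbounded, so every estimate involving $m_n(s)=\langle Id,\mu_n(s)\rangle$---in particular the Lipschitz-type continuity of $w(\cdot - m_n(s))$---must be accompanied by a truncation argument and a uniform-integrability bound; this is the technical heart of the argument and the main obstacle.

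Then I would identify any subsequential limit $\mu(\cdot)$ as a solution of the mean field equation $A_{t,f}(\mu(\cdot))=0$ for all $t\ge0$, $f\in H$. Since $M^n_{\cdot,f}\to0$, it suffices to pass to the limit in $A_{t,f}(\mu_n(\cdot))=M^n_{t,f}$. The linear terms $\langle f,\mu_n(t)\rangle$ pass to the limit by the definition of weak convergence in $\mathcal{P}_1$ (for $f=Id$ using uniform integrability, the defining feature of the $d_1$ topology). For the integral term I need continuity of the map $\mu\mapsto \langle(\E f(x+Z)-f(x))\,w(x-\langle Id,\mu\rangle),\mu\rangle$ on $\mathcal{P}_1$; this uses that $w$ is bounded with bounded derivative, so $x\mapsto w(x-m)$ is bounded and Lipschitz uniformly in $m$, and that $m\mapsto w(\cdot-m)$ is itself Lipschitz, combined once more with uniform integrability to control the dependence on $m_n(s)\to m(s)$. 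Together with a uniform-in-$n$ bound on $\int_0^t|L\langle f,\mu_n(s)\rangle|\,ds$ this lets me apply dominated convergence in $s$ and conclude $A_{t,f}(\mu(\cdot))=0$. Finally, uniqueness of the solution to \eqref{eq:Atf} with initial condition $\nu$---which I would prove separately by a Gr\"onwall argument in the $d_1$ metric, again exploiting the uniform Lipschitz bounds on $w$ to close the estimate on $d_1(\mu(t),\tilde\mu(t))$---forces all subsequential limits to coincide, upgrading tightness to convergence and completing the proof.
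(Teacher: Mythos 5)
Your outline follows the same three-step skeleton as the paper (tightness, identification of limit points, uniqueness), but several of the key technical ingredients are genuinely different. For tightness of the real-valued projections $\langle f,\mu_n(\cdot)\rangle$, you rely on the semimartingale decomposition and the $O(1/n)$ predictable quadratic variation of the Dynkin martingale $M^n_{\cdot,f}$, together with Lipschitz-in-time control of the compensator; this is close to the alternative route the authors sketch at the end of Section 5.3 (attributed to Kurtz, via Aldous's criterion). The paper's own proof of the one-dimensional tightness (Theorem \ref{thm:tightness_R}) is instead a direct coupling argument: they dominate the particle system by an independent system $\tilde{x}_i$ jumping at the top rate $a$ with the same jump lengths, exploit the monotonicity of trajectories, and apply a second-moment Markov inequality on dyadic time intervals. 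Your approach is arguably more standard and transferable; theirs is more elementary and makes the role of monotonicity and the bounded-rate hypothesis very explicit. For the passage from $\mathbb{R}$-valued tightness to tightness in $D([0,\infty),\mathcal{P}_1(\mathbb{R}))$, you correctly identify the crux---relative compactness in $(\mathcal{P}_1,d_1)$ is weak tightness plus uniform integrability of the first moment---but the paper needs (and proves) a genuine modification of Perkins' theorem (Theorem \ref{thm:perkins_mod}) with a strengthened quantitative compact-containment condition $K\le c/\eps^{1-\tau}$ calibrated to Assumption 2; the exponent $2/3$ in Assumption 2 is chosen precisely to feed into this. Simply invoking moment bounds without a Perkins-type lemma (or an equivalent device) leaves a real gap, because the Skorohod space is over $\mathcal{P}_1$ with the Wasserstein topology, not the weak topology.

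Finally, for uniqueness you propose a Gr\"onwall estimate in $d_1$. This is where you should be careful: the paper instead works with the pseudometric $d_H(\mu,\nu)=\sup_{f\in H}|\langle f,\mu\rangle-\langle f,\nu\rangle|$, where $H=\{f\in C_b:|f|\le1\}\cup\{Id\}$. The reason is that for general $f\in C_b$ with $|f|\le1$, the function $g_f(x)=\E f(x+Z)-f(x)$ is bounded but \emph{not} Lipschitz, so the integrand $g_f(x)\,w(x-m(s))$ cannot be paired against $d_1$ via Kantorovich--Rubinstein duality; with $d_H$ one only needs boundedness of the integrand and the estimate $|\langle g,\mu-\nu\rangle|\le\|g\|_\infty\, d_H(\mu,\nu)$, and the $Id$ component of $H$ supplies the control $|m^1(s)-m^2(s)|\le d_H(s)$ needed for the second term via $|w'|\le a'$. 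Your $d_1$-Gr\"onwall would require either restricting the mean-field equation to Lipschitz test functions and arguing that these are measure-determining, or switching to the paper's hybrid metric $d_H$; as written it does not obviously close. The rest of your identification argument---continuity of $\nu\mapsto\langle g_f(x)w(x-\langle Id,\nu\rangle),\nu\rangle$ on $\mathcal{P}_1$ and passage through the time integral---matches the paper's Theorem \ref{thm:weak_conv_of_eq} and Lemmas \ref{lem:EKex1}--\ref{lem:EKex2}.
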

The proof of this theorem is in Section \ref{sec:fluid_limit}. The argument follows the standard method for proving weak convergence results. Namely, based on Prohorov's theorem, we need to do three things:
\begin{itemize}
 \item Prove tightness of $\left\{ \mu_{n}\left( \cdot \right) \right\}_{n\geq 1}$ in the Skorohod space $D\left( [0,\infty), \mathcal{P}_{1}\left( \mathbb{R} \right) \right)$.
 \item Identify the limit of $\left\{ \mu_{n}\left( \cdot \right) \right\}_{n\geq 1}$: here we essentially have to show that the limit solves the deterministic mean field equation.
 \item Prove uniqueness of the solution to the mean field equation with a given initial condition.
\end{itemize}

\subsection{Open questions}\label{sec:open}

We end this section with some open questions.
\begin{itemize}
\item Theorem \ref{thm:main} is a law of large numbers type result, so it is natural to ask about the fluctuations around this first order limit. In particular, how does the variance of the empirical mean $m_n \left( t \right)$ scale with $n$ and $t$? Preliminary simulations suggest that this variance grows linearly in $t$, and for fixed $t$ decays in $n$ as $n^{- \alpha}$ for some $\alpha > 0$. (The simulations are inconclusive of what the exponent $\alpha$ should be.)
\item Once the order of fluctuations of the empirical mean $m_n \left( t \right)$ is known, can we determine the limiting distribution of $m_n \left( t \right)$ with appropriate scaling?
\item Going back to the finite particle system: can we really not deal with a fixed number ($n \geq 3$) of particles and determine the stationary distribution viewed from the center of mass in this case?
\item In Theorem \ref{thm:main} we make assumptions on the jump rate function $w$, the jump length distribution, and the initial distribution of the particles. Most of the assumptions are fairly weak, except for the assumption that $w$ is bounded, which is the main restrictive assumption. We believe that these assumptions can be weakened, and in particular we are interested in removing the assumption that $w$ is bounded. Can one prove the same result for unbounded $w$, e.g.\ $w\left( x \right) = e^{-x}$?
\end{itemize}

\section{Two-particle system}\label{sec:two_particle}
As an introduction to the model, we look at the simplest case: when $n=2$, i.e.\ when there are two competing particles. In particular, we look at two specific cases of the model, and in each case we compute the stationary distribution of the gap between the two particles. At the end of the section we illustrate the difficulties that arise in the case of $n \geq 3$ particles.

\subsection{Deterministic jumps}

First, let us look at the model when the length of the jumps is not random but a deterministic positive number. We always have the freedom to scale the system so that this length is equal to unity. So, in this subsection, we look at the model with deterministic jumps of length 1. 

In the case of two particles, $x_{1}\left(t\right)$ and $x_{2}\left(t\right)$, we are interested in how far away they move from each other, thus in the gap $g\left(t\right) = \left|x_{1}\left(t\right) - x_{2}\left(t\right)\right|$. For simplicity let us suppose that $x_{1}\left(0\right) - x_{2}\left(0\right) \in \mathbb{Z}$, so that $g\left(t\right) \in \mathbb{N}$ for all $t\geq 0$. In this case $g\left(t\right)$ is a continuous-time Markov process on $\mathbb{N}$, with infinitesimal generator $Q$ given by the model in the following way. Since the gap can only change by 1, $Q$ is a tridiagonal matrix. If the gap is 0, i.e. $x_{1}\left(t\right) = x_{2}\left(t\right)$, then both particles jump with rate $w\left(0\right)$ and whichever one does, the gap changes to 1. Therefore the rate at which the gap changes from 0 to 1 is $2w\left(0\right)$ and thus
\begin{equation*}
Q_{0,0} = - 2 w\left(0\right), \qquad Q_{0,1} = 2 w\left(0\right).
\end{equation*}
If the gap between the particles is $k>0$, then the one in the lead jumps with rate $w\left(\frac{k}{2}\right)$, while the one which is behind jumps with rate $w\left(-\frac{k}{2}\right)$. This means that the gap increases by 1 with rate $w\left(\frac{k}{2}\right)$ and decreases by 1 with rate $w\left(-\frac{k}{2}\right)$ and so
\begin{equation*}
Q_{k,k-1} = w\left(-\frac{k}{2}\right), \quad Q_{k,k} = -\left(w\left(-\frac{k}{2}\right) + w\left(\frac{k}{2}\right)\right), \quad Q_{k,k+1} = w\left(\frac{k}{2}\right).
\end{equation*}

When the jump rate function $w$ is a monotone decreasing, non-constant function, then $g\left( t \right)$ is a positive recurrent birth-and-death process with stationary distribution $\mathbf{\pi} = \left(\pi_{0},\pi_{1},\pi_{2},\dots \right)$ given by
\begin{align*}
\pi_{0} &= \frac{1}{1 + 2 \sum\limits_{k=1}^{\infty}{\frac{\prod_{i=0}^{k-1}{w\left(\frac{i}{2}\right)}}{\prod_{i=1}^{k}{w\left(-\frac{i}{2}\right)}}}}\\
\pi_{k} &= 2 \pi_{0}\frac{\prod_{i=0}^{k-1}{w\left(\frac{i}{2}\right)}}{\prod_{i=1}^{k}{w\left(-\frac{i}{2}\right)}}, \quad \text{for } k\geq 1.
\end{align*}

\begin{example}[\textmd{\textit{Exponential jump rate function}}]\label{ex:two_particle_exp}
If $w\left(x\right) = e^{-\beta x}$, where $\beta > 0$, then the stationary distribution becomes
\begin{align*}
\pi_{0} &= \frac{1}{1 + 2 \sum\limits_{k=1}^{\infty}{e^{-\frac{\beta k^{2}}{2}}}}\\
\pi_{k} &= \frac{2 e^{-\frac{\beta k^{2}}{2}}}{1 + 2 \sum\limits_{k=1}^{\infty}{e^{-\frac{\beta k^{2}}{2}}}}, \quad \text{for } k\geq 1,
\end{align*}
which shows Gaussian decay.
\end{example}

\begin{example}[\textmd{\textit{Jump rate function is a step function}}]\label{ex:two_particle_step}
Let
\begin{equation*}
w\left(x\right) = 
\begin{cases}
a & \text{if $x < 0$,}
\\
b &\text{if $0 \leq x$,}
\end{cases}
\end{equation*}
where $a>b>0$.  Then a short calculation shows that the stationary distribution is
\begin{align*}
\pi_{0} &= \frac{a-b}{a+b}\\
\pi_{k} &= 2\frac{a-b}{a+b} \left(\frac{b}{a}\right)^{k}, \quad \text{for } k\geq 1,
\end{align*}
which shows exponential decay.
\end{example}

\subsection{Jump length distribution having a density}

Consider again the case of two particles, but now let us suppose that the distribution of the length of the jumps has a density, and denote this by $\varphi$ as mentioned in Section \ref{sec:model}. As before, we are interested in the gap $g\left(t\right) = \left|x_{1}\left(t\right) - x_{2}\left(t\right)\right|$ between the two particles. Denote by $p\left(g,t\right)$ the probability density of the gap $g$ at time $t$, which is concentrated on $[0,\infty)$.%
{
\psset{unit=1pt}
\psset{linewidth=0.5pt}
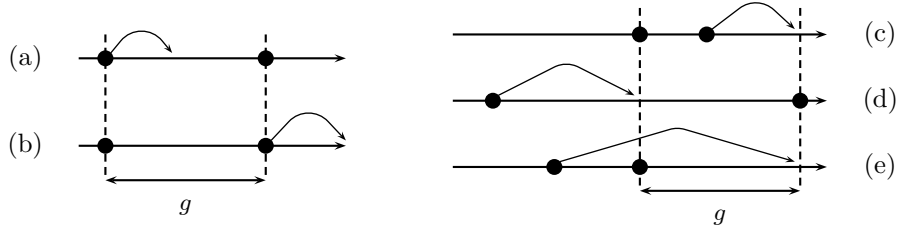
\begin{figure}[ht]
  \centering
  
	\begin{pspicture*}(342,100)
	
		% left side of picture
		\psline[linewidth=0.8pt]{->}(30,66)(130,66)
		\psline[linewidth=0.8pt]{->}(30,33)(130,33)
		\psline[linewidth=0.8pt,linestyle=dashed, dash=3 2]{-}(40,75)(40,22)
		\psline[linewidth=0.8pt,linestyle=dashed, dash=3 2]{-}(100,75)(100,22)
		\psline[linewidth=0.8pt]{<->}(40,20)(100,20)
		\rput{0}(70,10){\small{$g$}}
		\qdisk(40,66){3}
		\qdisk(40,33){3}
		\qdisk(100,66){3}
		\qdisk(100,33){3}
		\psline[linearc=10]{->}(42,68)(53.5,82)(65,68)
		\psline[linearc=10]{->}(102,35)(116,50)(130,35)
		
		\rput{0}(10,66){(a)}
		\rput{0}(10,33){(b)}
	
	% right side of picture
		% part 1
		\psline[linewidth=0.8pt]{->}(170,75)(310,75)
		\psline[linewidth=0.8pt,linestyle=dashed, dash=3 2]{-}(240,85)(240,18)
		\psline[linewidth=0.8pt,linestyle=dashed, dash=3 2]{-}(300,85)(300,18)
		\psline[linewidth=0.8pt]{<->}(240,16)(300,16)
		\rput{0}(270,6){\small{$g$}}
		\qdisk(240,75){3}
		\qdisk(265,75){3}
		\psline[linearc=10]{->}(267,77)(284,90)(298,77)
		
		\rput{0}(330,75){(c)}

		% part 2
		\psline[linewidth=0.8pt]{->}(170,50)(310,50)
		\qdisk(185,50){3}
		\qdisk(300,50){3}
		\psline[linearc=10]{->}(187,52)(212.5,65)(238,52)
		
		\rput{0}(330,50){(d)}

		% part 3
		\psline[linewidth=0.8pt]{->}(170,25)(310,25)		
		\qdisk(240,25){3}
		\qdisk(208,25){3}				
		\psline[linearc=10]{->}(210,27)(254,40)(298,27)
		
		\rput{0}(330,25){(e)}

	\end{pspicture*}
	\caption{Events that change the gap length.}\label{fig:jumps}
\end{figure}
}%
The time evolution of the gap is completely described by the time evolution of $p$, which is described by the master equation %\cite{gardiner:hsm}
\begin{align*}
\dot{p}\left(g\right) = &-p\left(g\right) \left( w\left( -g/2 \right) + w\left( g/2 \right) \right) + \int\limits_{0}^{g}{p\left(y\right) w\left( y/2 \right) \varphi\left( g-y \right) dy}\\
&+ \int\limits_{g}^{\infty}{p\left(y\right) w\left( -y/2 \right) \varphi\left( y-g \right) dy} + \int\limits_{0}^{\infty}{p\left(y\right) w\left( -y/2 \right) \varphi\left( y+g \right) dy},
\end{align*}
where the overdot denotes time derivative. The first term on the right hand side accounts for the case when the gap length is $g$ and one of the two particles jumps (Fig.\ \ref{fig:jumps}a and \ref{fig:jumps}b). The second term accounts for the case when a gap of length $g$ is created from a previous gap of length $y < g$ due to the leader jumping forward (Fig.\ \ref{fig:jumps}c). The third term accounts for the case when a gap of length $g$ is created from a previous gap of length $y > g$ due to the laggard jumping forward, but still remaining the laggard (Fig.\ \ref{fig:jumps}d). The fourth term accounts for the case when a gap of length $g$ is created from a previous gap of length $y > 0$ due to the laggard jumping forward and overtaking the leader (Fig.\ \ref{fig:jumps}e). It is easy to check that this conserves the total probability, i.e. $\int_{0}^{\infty}{\dot{p}\left(g\right) dg} = 0$.

\subsubsection{Stationary distribution of the gap}

Our goal is to compute the stationary density, i.e.\ $p$ such that $\dot{p}\left(g\right) = 0$. It is unlikely to be able to find a closed formula for the stationary density in full generality (for general $\varphi$ and $w$), and so we restrict ourselves to the special case when $\varphi(x)=e^{-x} \mathbf{1} [x\geq 0]$ and $w\left( x \right) = e^{- \beta x}$, where $\beta > 0$. In this case the equation for stationarity becomes
\[
0 = -p\left(g\right) \left( e^{\frac{\beta g}{2}} + e^{- \frac{\beta g}{2}} \right) + \int\limits_{0}^{g}{p\left(y\right) e^{- \frac{\beta y}{2}} e^{-\left( g - y \right)} dy} + \int\limits_{g}^{\infty}{p\left(y\right) e^{\frac{\beta y}{2}} e^{-\left( y-g \right)} dy} + \int\limits_{0}^{\infty}{p\left(y\right) e^{\frac{\beta y}{2}} e^{-\left( y+g \right)} dy}.
\]
If we look at the last integral as the sum of the integral from $0$ to $g$ and the integral from $g$ to $\infty$, then the equation above can rearranged to get
\[
0 = -p\left(g\right) \left( e^{\frac{\beta g}{2}} + e^{- \frac{\beta g}{2}} \right) + \int\limits_{0}^{g}{p\left(y\right) e^{-g} \left[ e^{\left(- \frac{\beta}{2} + 1\right) y} + e^{\left( \frac{\beta}{2} - 1\right) y} \right] dy} + \int\limits_{g}^{\infty}{p\left(y\right) e^{\left(\frac{\beta}{2} - 1\right) y} \left[ e^{g} + e^{-g} \right] dy},
\]
or more simply, after dividing by 2,
\begin{equation}\label{eq:stac_2part_exp}
0 = -p\left(g\right) \cosh\left( \frac{\beta g}{2} \right) + e^{-g} \int\limits_{0}^{g}{p\left(y\right) \cosh\left( \left(\frac{\beta}{2} - 1\right) y \right) dy} + \cosh\left( g \right) \int\limits_{g}^{\infty}{p\left(y\right) e^{\left(\frac{\beta}{2} - 1\right) y} dy}.
\end{equation}
%\begin{align}
%0 = &-p\left(g\right) \cosh\left( \frac{\beta g}{2} \right) + e^{-g} \int\limits_{0}^{g}{p\left(y\right) \cosh\left( \left(\frac{\beta}{2} - 1\right) y \right) dy}\notag\\
%&+  \cosh\left( g \right) \int\limits_{g}^{\infty}{p\left(y\right) e^{\left(\frac{\beta}{2} - 1\right) y} dy}.\label{eq:stac_2part_exp}
%\end{align}

\subsubsection{\texorpdfstring{Special case of $\beta = 2$}{Special case of beta = 2}}

In the special case of $\beta = 2$, \eqref{eq:stac_2part_exp} simplifies to
\begin{equation}\label{eq:b2}
p\left( g \right) \cosh\left( g \right) = e^{-g} \int\limits_{0}^{g}{p\left(y\right) dy} + \cosh\left( g \right) \int\limits_{g}^{\infty}{p\left(y\right) dy}.
\end{equation}
This equation can be solved explicitly. Using the fact that $\int_{0}^{\infty} p\left(g\right) dg = 1$, and thus $\int_{g}^{\infty} p\left(y\right) dy = 1 - \int_{0}^{g} p\left(y\right) dy$, we arrive at
\begin{equation*}
p\left( g \right) \cosh\left( g \right) = - \sinh\left( g \right) \int\limits_{0}^{g}{p\left(y\right) dy} + \cosh\left( g \right),
\end{equation*}
and thus dividing by $\sinh\left( g \right)$ we have (for $g > 0$)
\begin{equation}\label{eq:b2_2}
\coth\left( g \right) p\left( g \right) = - \int\limits_{0}^{g}{p\left(y\right) dy} + \coth\left( g \right).
\end{equation}
Let us differentiate both sides of \eqref{eq:b2_2} with respect to $g$. After rearranging, using the identity $1/\sinh^{2}\left( g \right) - 1 = \coth^{2}\left( g \right) - 2$, and dividing by $\coth\left( g \right)$, we arrive at
\begin{equation*}%\label{eq:b2_3}
p'\left( g \right) = \left( \coth\left( g \right) - \frac{2}{\coth\left( g \right)} \right) p\left( g \right) - \frac{1}{\sinh\left( g \right) \cosh\left( g \right)}.
\end{equation*}
The general solution to this linear equation is
\begin{equation*}
p\left( g \right) = c \frac{\sinh\left( g \right)}{\cosh^{2}\left( g \right)} + \frac{1}{\cosh^{2}\left( g \right)}.
\end{equation*}
The restriction $\int_{0}^{\infty} p\left(g\right) dg = 1$ specifies the constant:
\begin{equation*}
\int\limits_{0}^{\infty} p\left(g\right) dg = \left[ -c \frac{1}{\cosh\left( g \right)} + \tanh\left( g \right) \right]_{0}^{\infty} = 1 + c,
\end{equation*}
thus $c=0$ and consequently the density $p$ is
\begin{equation}\label{eq:stac_eo_2part_beta2}
p\left( g \right) = \frac{1}{ \cosh^{2} \left( g \right) }.
\end{equation}

\subsubsection{\texorpdfstring{The case of $\beta \neq 2$}{The case of beta != 2}}

In the case of $\beta \neq 2$, one way of attacking \eqref{eq:stac_2part_exp} is differentiating both sides with respect to $g$ twice, and then using \eqref{eq:stac_2part_exp} again to get rid of the integrals. This way we arrive at a second-order homogeneous differential equation with non-constant coefficients:
\[
\cosh\left( \frac{\beta g}{2} \right) p''\left( g \right) + \left( \beta + 1 \right) \sinh\left( \frac{\beta g}{2} \right) p'\left( g \right) + \left( \frac{\beta^{2}}{4} + \frac{\beta}{2} \right) \cosh\left( \frac{\beta g}{2} \right) p\left( g \right) = 0.
\]
Dividing by $\cosh\left( \frac{\beta g}{2} \right)$:
\begin{equation}\label{eq:2orddiffb_2}
p''\left( g \right) + \left( \beta + 1 \right) \tanh\left( \frac{\beta g}{2} \right) p'\left( g \right) + \left( \frac{\beta^{2}}{4} + \frac{\beta}{2} \right) p\left( g \right) = 0.
\end{equation}
Before looking at the solutions to this, it is worth looking at large $g$ asymptotics. If $g$ is large, then $\tanh\left( \frac{\beta g}{2} \right) \approx 1$, and therefore the following equation approximately holds:
\begin{equation*}
p''\left( g \right) + \left( \beta + 1 \right) p'\left( g \right) + \left( \frac{\beta^{2}}{4} + \frac{\beta}{2} \right) p\left( g \right) = 0.
\end{equation*}
The two solutions to
\begin{equation*}
\lambda^{2} + \left( \beta + 1 \right) \lambda +  \left( \frac{\beta^{2}}{4} + \frac{\beta}{2} \right) = 0
\end{equation*}
are $\lambda_{1} = -\beta / 2$ and $\lambda_{2} = -\left( 1 + \beta / 2 \right)$, and therefore the density $p\left( g \right)$ is asymptotically a constant multiple of either $e^{- \beta g / 2}$ or $e^{-\left( 1 + \beta / 2 \right) g}$. Since for $\beta = 2$ we know by \eqref{eq:stac_eo_2part_beta2} that $p\left( g \right)$ is asymptotically $e^{-2g}$, we suspect that for general $\beta$ the density $p\left( g \right)$ is asymptotically a constant multiple of $e^{-\left( 1 + \beta / 2 \right) g}$. This suspicion will be confirmed shortly. By differentiating, it can be checked that a solution to \eqref{eq:2orddiffb_2} is
\begin{equation}\label{eq:sol1b}
p\left( g \right) = \frac{1}{\left( \cosh\left( \frac{\beta g}{2} \right) \right)^{1 + 2 / \beta}}.
\end{equation}
Using the method of variation of constants, the other solution can be found as well, and therefore the general solution to \eqref{eq:2orddiffb_2} is
\begin{equation}\label{eq:solb}
p\left( g \right) = c_{1} \frac{1}{\left( \cosh\left( \frac{\beta g}{2} \right) \right)^{1 + 2 / \beta}} + c_{2} \frac{\int\limits_{0}^{g}{ \left( \cosh\left( \frac{\beta y}{2} \right) \right)^{2 / \beta} dy} }{\left( \cosh\left( \frac{\beta g}{2} \right) \right)^{1 + 2 / \beta}},
\end{equation}
where $c_{1}$ and $c_{2}$ are constants. In our case, one constraint on the constants is the fact that $\int_{0}^{\infty} p\left(g\right) dg = 1$. The other constraint is given by the following. We know that $p\left( g \right)$ is a continuous density function on $\left( 0 , \infty \right)$, and due to \eqref{eq:stac_2part_exp} we know that
\begin{equation}\label{eq:g_to_0}
\lim\limits_{g \to 0+ } p \left( g \right) = \int\limits_{0}^{\infty} p \left( y \right) e^{\left( \frac{\beta}{2} - 1 \right) y } dy.
\end{equation}
So from \eqref{eq:solb} and \eqref{eq:g_to_0} the second constraint on the constants is
\begin{equation*}
c_{1} = c_{1} \int\limits_{0}^{\infty} \frac{ e^{\left( \frac{\beta}{2} - 1 \right) y } }{ \left( \cosh\left( \frac{\beta y}{2} \right) \right)^{1 + 2 / \beta} } dy + c_{2} \int\limits_{0}^{\infty} \frac{\int\limits_{0}^{y}{ \left( \cosh\left( \frac{\beta u}{2} \right) \right)^{2 / \beta} du} }{\left( \cosh\left( \frac{\beta y}{2} \right) \right)^{1 + 2 / \beta}} e^{\left( \frac{\beta}{2} - 1 \right) y } dy.
\end{equation*}
However, using the substitution $u = e^{\frac{\beta}{2} y}$ one can see that
\begin{equation*}
\int\limits_{0}^{\infty} \frac{ e^{\left( \frac{\beta}{2} - 1 \right) y } }{ \left( \cosh\left( \frac{\beta y}{2} \right) \right)^{1 + 2 / \beta} } dy = 1,
\end{equation*}
showing that $c_{2} = 0$. So for general $\beta$ the stationary density of the gap is
\begin{equation*}
p \left( g \right) = c \frac{1}{ \left( \cosh\left( \frac{\beta g}{2} \right) \right)^{1 + 2 / \beta} },
\end{equation*}
with $c$ being the normalizing constant.

\subsection{Many-particle case}

In the $n \geq 3$ case with deterministic jumps, the particles $x_{1}\left(t\right), \dots , x_{n}\left(t\right)$ form a continuous-time Markov process on an $n$ dimensional lattice. Viewed from the mean position of the particles, this becomes a Markov process on an $n-1$ dimensional lattice. However, even in the $n=3$ case the combinatorics of the problem becomes too difficult to find a closed form for the stationary distribution (see Figure \ref{fig:haromszogek_sz}). The model with the jump length distribution having a density presents similar difficulties in the $n \geq 3$ case. (As we have seen, the problem is not completely solved even in the $n=2$ case.) Understanding the finite particle case is the subject of future work.
\begin{figure}[ht]
\centering

\begin{pspicture}(12,12)
	%% Nodes
		%% Inner hexagon
		\cnodeput{0}(6,6){0}{\tiny{(0,0,0)}}
		\rput{0}(8.5,6){\circlenode{1}{\tiny{(1,-2,1)}}}
		\rput{0}(7.25,8.16506){\circlenode{2}{\tiny{(2,-1,-1)}}}
		\rput{0}(4.75,8.16506){\circlenode{3}{\tiny{(1,1,-2)}}}
		\rput{0}(3.5,6){\circlenode{4}{\tiny{(-1,2,-1)}}}
		\rput{0}(4.75,3.83493){\circlenode{5}{\tiny{(-2,1,1)}}}
		\rput{0}(7.25,3.83493){\circlenode{6}{\tiny{(-1,-1,2)}}}

%		%% Outer hexagon
		\cnodeput{0}(11,6){7}{\tiny{(2,-4,2)}}
		\cnodeput{0}(8.5,10.330127){9}{\tiny{(4,-2,-2)}}
		\cnodeput{0}(3.5,10.330127){11}{\tiny{(2,2,-4)}}
		\cnodeput{0}(1,6){13}{\tiny{(-2,4,-2)}}
		\cnodeput{0}(3.5,1.66987){15}{\tiny{(-4,2,2)}}
		\cnodeput{0}(8.5,1.66987){17}{\tiny{(-2,-2,4)}}
		
		\cnodeput{0}(9.75,8.16506){8}{\tiny{(3,-3,0)}}
		\cnodeput{0}(6,10.330127){10}{\tiny{(3,0,-3)}}
		\cnodeput{0}(2.25,8.16506){12}{\tiny{(0,3,-3)}}
		\cnodeput{0}(2.25,3.83493){14}{\tiny{(-3,3,0)}}
		\cnodeput{0}(6,1.66987){16}{\tiny{(-3,0,3)}}
		\cnodeput{0}(9.75,3.83493){18}{\tiny{(0,-3,3)}}
		
		%%Farout points
		\pnode(11.4,6.6928){out71}
		\pnode(11.8,6){out72}
		\pnode(11.4,5.30717){out73}
		\pnode(10.15,8.85788){out81}
		\pnode(10.55,8.16506){out82}
		\pnode(8.1,11.02294){out91}
		\pnode(8.9,11.02294){out92}
		\pnode(9.3,10.330127){out93}
		\pnode(5.6,11.02294){out101}
		\pnode(6.4,11.02294){out102}
		\pnode(2.7,10.330127){out111}
		\pnode(3.1,11.02294){out112}
		\pnode(3.9,11.02294){out113}
		\pnode(1.425,8.16506){out121}
		\pnode(1.825,8.85788){out122}
		\pnode(0.6,5.30717){out131}
		\pnode(0.2,6){out132}
		\pnode(0.6,6.6928){out133}
		\pnode(1.825,3.142109){out141}
		\pnode(1.425,3.83493){out142}
		\pnode(3.9,0.977049){out151}
		\pnode(3.1,0.977049){out152}
		\pnode(2.7,1.66987){out153}
		\pnode(6.4,0.977049){out161}
		\pnode(5.6,0.977049){out162}
		\pnode(9.3,1.66987){out171}
		\pnode(8.9,0.977049){out172}
		\pnode(8.1,0.977049){out173}
		\pnode(10.55,3.83493){out181}
		\pnode(10.15,3.142109){out182}
			
	%% Lines
		%% Inner hexagon inside
		\ncline[linecolor=blue]{<-}{2}{0}
		\ncput*{\tiny{$w(0)$}}
		\ncline[linecolor=blue]{<-}{4}{0}
		\ncput*{\tiny{$w(0)$}}
		\ncline[linecolor=blue]{<-}{6}{0}
		\ncput*{\tiny{$w(0)$}}
		\ncline[linecolor=red]{<-}{0}{1}
		\ncput*{\tiny{$w(-2)$}}
		\ncline[linecolor=red]{<-}{0}{3}
		\ncput*{\tiny{$w(-2)$}}
		\ncline[linecolor=red]{<-}{0}{5}
		\ncput*{\tiny{$w(-2)$}}
		
		%% Inner hexagon boundary
		\ncline[linecolor=green]{<-}{3}{2}
		\ncput*{\tiny{$w(-1)$}}
		\ncline[linecolor=green]{<-}{1}{2}
		\ncput*{\tiny{$w(-1)$}}
		\ncline[linecolor=green]{<-}{3}{4}
		\ncput*{\tiny{$w(-1)$}}
		\ncline[linecolor=green]{<-}{5}{4}
		\ncput*{\tiny{$w(-1)$}}
		\ncline[linecolor=green]{<-}{5}{6}
		\ncput*{\tiny{$w(-1)$}}
		\ncline[linecolor=green]{<-}{1}{6}
		\ncput*{\tiny{$w(-1)$}}
		
		%% Outer hexagon inside
		\ncline[linecolor=red]{<-}{1}{7}
		\ncput*{\tiny{$w(-4)$}}
		\ncline[linecolor=blue]{<-}{8}{1}
		\ncput*{\tiny{$w(1)$}}
		\ncline[linecolor=red]{<-}{2}{8}
		\ncput*{\tiny{$w(-3)$}}
		\ncline[linecolor=blue]{<-}{9}{2}
		\ncput*{\tiny{$w(2)$}}
		\ncline[linecolor=red]{<-}{2}{10}
		\ncput*{\tiny{$w(-3)$}}
		\ncline[linecolor=blue]{<-}{10}{3}
		\ncput*{\tiny{$w(1)$}}
		\ncline[linecolor=red]{<-}{3}{11}
		\ncput*{\tiny{$w(-4)$}}
		\ncline[linecolor=blue]{<-}{12}{3}
		\ncput*{\tiny{$w(1)$}}
		\ncline[linecolor=red]{<-}{4}{12}
		\ncput*{\tiny{$w(-3)$}}
		\ncline[linecolor=blue]{<-}{13}{4}
		\ncput*{\tiny{$w(2)$}}
		\ncline[linecolor=red]{<-}{4}{14}
		\ncput*{\tiny{$w(-3)$}}
		\ncline[linecolor=blue]{<-}{14}{5}
		\ncput*{\tiny{$w(1)$}}
		\ncline[linecolor=red]{<-}{5}{15}
		\ncput*{\tiny{$w(-4)$}}
		\ncline[linecolor=blue]{<-}{16}{5}
		\ncput*{\tiny{$w(1)$}}
		\ncline[linecolor=red]{<-}{6}{16}
		\ncput*{\tiny{$w(-3)$}}
		\ncline[linecolor=blue]{<-}{17}{6}
		\ncput*{\tiny{$w(2)$}}
		\ncline[linecolor=red]{<-}{6}{18}
		\ncput*{\tiny{$w(-3)$}}
		\ncline[linecolor=blue]{<-}{18}{1}
		\ncput*{\tiny{$w(1)$}}

		%% Outer hexagon boundary
		\ncline[linecolor=green]{<-}{7}{18}
		\ncput*{\tiny{$w(0)$}}
		\ncline[linecolor=green]{<-}{7}{8}
		\ncput*{\tiny{$w(0)$}}
		\ncline[linecolor=green]{<-}{8}{9}
		\ncput*{\tiny{$w(-2)$}}
		\ncline[linecolor=green]{<-}{10}{9}
		\ncput*{\tiny{$w(-2)$}}
		\ncline[linecolor=green]{<-}{11}{10}
		\ncput*{\tiny{$w(0)$}}
		\ncline[linecolor=green]{<-}{11}{12}
		\ncput*{\tiny{$w(0)$}}
		\ncline[linecolor=green]{<-}{12}{13}
		\ncput*{\tiny{$w(-2)$}}
		\ncline[linecolor=green]{<-}{14}{13}
		\ncput*{\tiny{$w(-2)$}}
		\ncline[linecolor=green]{<-}{15}{14}
		\ncput*{\tiny{$w(0)$}}
		\ncline[linecolor=green]{<-}{15}{16}
		\ncput*{\tiny{$w(0)$}}
		\ncline[linecolor=green]{<-}{16}{17}
		\ncput*{\tiny{$w(-2)$}}
		\ncline[linecolor=green]{<-}{18}{17}
		\ncput*{\tiny{$w(-2)$}}
		
		%% Short lines linking outwards
		\ncline[linecolor=blue]{-}{7}{out71}
		\ncline[linecolor=red]{<-}{7}{out72}
		\ncline[linecolor=blue]{-}{7}{out73}
		\ncline[linecolor=red]{<-}{8}{out82}
		\ncline[linecolor=blue]{-}{8}{out81}
		\ncline[linecolor=red]{<-}{9}{out91}
		\ncline[linecolor=blue]{-}{9}{out92}
		\ncline[linecolor=red]{<-}{9}{out93}
		\ncline[linecolor=red]{<-}{10}{out101}
		\ncline[linecolor=blue]{-}{10}{out102}
		\ncline[linecolor=blue]{-}{11}{out111}
		\ncline[linecolor=red]{<-}{11}{out112}
		\ncline[linecolor=blue]{-}{11}{out113}
		\ncline[linecolor=blue]{-}{12}{out121}
		\ncline[linecolor=red]{<-}{12}{out122}
		\ncline[linecolor=red]{<-}{13}{out131}
		\ncline[linecolor=blue]{-}{13}{out132}
		\ncline[linecolor=red]{<-}{13}{out133}
		\ncline[linecolor=red]{<-}{14}{out141}
		\ncline[linecolor=blue]{-}{14}{out142}
		\ncline[linecolor=blue]{-}{15}{out151}
		\ncline[linecolor=red]{<-}{15}{out152}
		\ncline[linecolor=blue]{-}{15}{out153}
		\ncline[linecolor=blue]{-}{16}{out161}
		\ncline[linecolor=red]{<-}{16}{out162}
		\ncline[linecolor=red]{<-}{17}{out171}
		\ncline[linecolor=blue]{-}{17}{out172}
		\ncline[linecolor=red]{<-}{17}{out173}
		\ncline[linecolor=red]{<-}{18}{out181}
		\ncline[linecolor=blue]{-}{18}{out182}		
		
\end{pspicture}

\caption{In the $n=3$ case with deterministic jumps the particles viewed from the mean position form a continuous-time Markov process on a 2 dimensional lattice with directed edges, as seen above. The coordinates in the nodes show $\left( 3\left(x_{1}\left(t\right) - m_{n}\left(t\right)\right), 3\left(x_{2}\left(t\right) - m_{n}\left(t\right)\right), 3\left(x_{3}\left(t\right) - m_{n}\left(t\right)\right) \right)$. (We multiply by 3 only so that we need not write fractions in the figure.) The rate of going from one node to another on a particular edge is indicated on the edge itself, however, once again the argument of the jump rate function $w\left( \cdot \right)$ is multiplied by 3 so that we do not need to write fractions in the figure. The edges going outward from the origin are colored in blue, the ones which are going inward toward the origin are colored in red, while the remaining edges are colored green. Since the rates on the incoming red arrows are larger than those on the outgoing blue arrows, one can see that there is a drift towards the origin, and therefore one expects there to be a stationary distribution on this lattice. However, there can be no detailed balance since there are directed cycles in this graph, and therefore calculating the stationary distribution becomes difficult.}
\label{fig:haromszogek_sz}
\end{figure}

\afterpage{\clearpage}

\section{Mean field}\label{sec:mf}

We now turn our attention to the mean field approximation of the model and study the solutions of the mean field equation \eqref{eq:master}. First of all, using the fact that $\varphi$ is a probability density function, we can check that the mean field equation \eqref{eq:master} conserves the total probability, i.e.\ $\int_{-\infty}^{\infty} \pd{\varrho\left(x,t\right)}{t} dx = 0$.

An important quantity is the average speed of the mean of the density. This speed can be expressed as a simple integral by differentiating equation \eqref{eq:mean} with respect to time and using the mean field equation \eqref{eq:master}:
\begin{align}
\dot{m}\left(t\right) &= \int\limits_{-\infty}^{\infty} x \pd{\varrho\left(x,t\right)}{t} dx\notag\\
&= - \int\limits_{-\infty}^{\infty} x w\left(x-m\left(t\right)\right) \varrho\left(x,t\right) dx + \int\limits_{-\infty}^{\infty} \int\limits_{-\infty}^{x} x w\left(y-m\left(t\right)\right) \varrho\left(y,t\right) \varphi\left(x-y\right)dy dx\notag\\
&= - \int\limits_{-\infty}^{\infty} x w\left(x-m\left(t\right)\right) \varrho\left(x,t\right) dx + \int\limits_{-\infty}^{\infty} w\left(y-m\left(t\right)\right) \varrho\left(y,t\right) \int\limits_{y}^{\infty} x \varphi\left(x-y\right) dx dy\notag\\
&= - \int\limits_{-\infty}^{\infty} x w\left(x-m\left(t\right)\right) \varrho\left(x,t\right) dx +  \int\limits_{-\infty}^{\infty} w\left(y-m\left(t\right)\right) \varrho\left(y,t\right) dy + \int\limits_{-\infty}^{\infty} y w\left(y-m\left(t\right)\right) \varrho\left(y,t\right) dy\notag\\
&= \int\limits_{-\infty}^{\infty} w\left(y-m\left(t\right)\right) \varrho\left(y,t\right) dy.\label{eq:speed_general}
\end{align}

Our goal is to find a stationary solution to the mean field equation \eqref{eq:master} in the form of a traveling wave, see \eqref{eq:TravellingWave}. In this case $\rho$ is centered, since
\[
\int\limits_{-\infty}^{\infty} x \rho\left(x\right) dx = \int\limits_{-\infty}^{\infty} \left(x-ct\right) \rho\left(x-ct\right) dx = \int\limits_{-\infty}^{\infty} x \rho\left(x-ct\right) dx - ct = \int\limits_{-\infty}^{\infty} x \varrho\left(x,t\right) dx - ct = m\left(t\right) - ct = 0.\\
\]
We now turn to proving Theorem \ref{thm:exp_stac}.
\begin{proof}[Proof of Theorem \ref{thm:exp_stac}]
In the specific case when the jump length distribution is exponentially distributed we solve \eqref{eq:stationary} for general jump rate function $w$. Substituting $\varphi(x)=e^{-x} \mathbf{1} [x\geq 0]$ into \eqref{eq:stationary} we get
\begin{equation}
\label{eq:stationary_exp}
-c\rho'\left(x\right) = -w\left(x\right)\rho\left(x\right)+\int\limits_{-\infty}^{x} w\left(y\right)\rho\left(y\right)e^{-\left(x-y\right)}dy.
\end{equation}
First, we take the derivative of the integral term in \eqref{eq:stationary_exp}. Using the property of the exponential function, that its derivative is itself, we get
\[
\begin{aligned}
\frac{d}{dx}\left(\int\limits_{-\infty}^{x} w\left(y\right)\rho\left(y\right)e^{-\left(x-y\right)}dy\right) &= w\left(x\right)\rho\left(x\right) - \int\limits_{-\infty}^{x} w\left(y\right)\rho\left(y\right)e^{-\left(x-y\right)}dy\\
&= w\left(x\right)\rho\left(x\right) - w\left(x\right)\rho\left(x\right) + c\rho'\left(x\right) = c\rho'\left(x\right),
\end{aligned}
\]
where we used \eqref{eq:stationary_exp} in the second line. It follows that for some constant $D$ we have
\begin{equation*}
\int\limits_{-\infty}^{x} w\left(y\right)\rho\left(y\right) e^{-\left(x-y\right)}dy = c\rho\left(x\right) + D.
\end{equation*}
Plugging this into \eqref{eq:stationary_exp} we arrive at
\begin{equation}\label{eq:thm1biz_eq}
-c\rho'\left(x\right)+w\left(x\right)\rho\left(x\right) = c\rho\left(x\right) + D.
\end{equation}
To show that $D=0$, let us integrate both sides of the equation with respect to $x$ from $-\infty$ to $\infty$ (by integrating from $-R$ to $R$ and taking the limit as $R\to\infty$). The integral of the first term on the left hand side is $0$ since the total probability is conserved by the mean field equation, and the integral of the second term on the left hand side is $c$ according to \eqref{eq:speed_general}. Since $\rho$ is a density we also have that the integral of the first term on the right hand side is $c$ as well, therefore
\begin{equation*}
\lim_{R\to\infty} \int\limits_{-R}^{R} D dx = 0,
\end{equation*}
and thus $D=0$. Now rearranging \eqref{eq:thm1biz_eq} we get
\begin{equation*}
\rho'\left(x\right) = \left(\frac{1}{c}w\left(x\right)-1\right)\rho\left(x\right),
\end{equation*}
from which we arrive at \eqref{eq:stationary_dist}.

If $w$ is non-constant, then from \eqref{eq:speed_general} it follows that
\begin{equation*}
w\left(-\infty\right) > c > w\left(\infty\right).
\end{equation*}
From this it immediately follows that
% Therefore there exist an $x_{0} > 0$ and an $\varepsilon > 0$ such that for $x > x_{0}$ we have $\frac{1}{c}w\left(x\right) < 1 - \varepsilon$, and so for $x > x_{0}$ (and $K \geq 0$) we have
% \begin{equation*}
% \rho\left(x\right) \leq K e^{\int\limits_{0}^{x_{0}} \left(\frac{1}{c}w\left(s\right)-1\right) ds} e^{-\varepsilon \left(x-x_{0}\right)},
% \end{equation*}
% which means that 
$\rho$ decays at least exponentially as $x \to \pm \infty$.% The same can be seen as $x \to -\infty$, and therefore $\rho \in L^{1}\left(\mathbb{R}\right)$ so it can be normalized.

The last assertion of the theorem follows from the fact that $\rho$ is centered, which we have seen above.
\end{proof}
\begin{proof}[Alternative proof of Theorem \ref{thm:exp_stac} by I.P.\ T\'oth]
Let us consider a moving walkway that goes to the left with speed $c$, above which there is also a rope that goes to the right with speed $v >> c$ (see Figure \ref{fig:mogy}). We define the following stochastic process, which in the $v \to \infty$ limit is the same process that is described by \eqref{eq:stationary_exp}:
\begin{itemize}
\item All the particles are on the moving walkway going left with speed $c$ and a particle at $x$ jumps up and catches the rope with rate $w\left(x\right)$.
\item If a particle is hanging to the rope, then it lets go and falls back with rate $v$.
\end{itemize}

\def\ManLeft{%
	%% Little man going left
	\psline[linewidth=1pt](0,0)(0.3,0.5) % left leg
	\psline[linewidth=1pt](0.6,0)(0.3,0.5) % right leg
	\psline[linewidth=1pt](0.3,0.5)(0.3,1.2) % body
	\psline[linewidth=1pt](0.3,1)(0,0.7) % arm
%	\psline[linewidth=1pt](0.4,1)(0.3,0.5) % other arm
	\pscircle[linewidth=1pt](0.3,1.4){0.2} % head
}

\def\ManRight{%
	%% Little man going right
	\psline[linewidth=1pt](2.2,2.2)(1.4,1.4) % right arm
	\psline[linewidth=1pt](1.4,1.4)(1.52,1.76) % upper body
	\psline[linewidth=1pt](1.4,1.4)(1.04,1.16) % left arm
	\psline[linewidth=1pt](1.4,1.4)(1.16,0.92) %  lower body
	\psline[linewidth=1pt](1.16,0.92)(0.86,0.5) % right leg
	\psline[linewidth=1pt](1.16,0.92)(0.62,0.74) % left leg
	\pscircle[linewidth=1pt](1.58,1.94){0.1897} % head
}

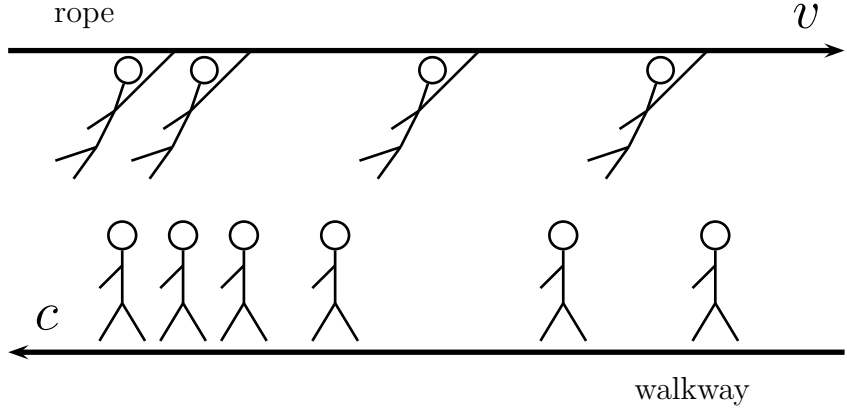
\begin{figure}[ht]
\centering

\begin{pspicture}(12,6)

	%% Rope
	\psline[linewidth=2pt]{->}(1,5)(12,5)
	\uput*{0.3}[90]{0}(11.5,5){\huge{$v$}}
	\uput*{0.3}[90]{0}(2,5){\large{rope}}
		
	%% People going right
	\uput*{0.2}[270]{0}(1,3){\ManRight}
	\uput*{0.2}[270]{0}(2,3){\ManRight}
	\uput*{0.2}[270]{0}(5,3){\ManRight}
	\uput*{0.2}[270]{0}(8,3){\ManRight}

	%% Walkway
	\psline[linewidth=2pt]{<-}(1,1)(12,1)
	\uput*{0.3}[90]{0}(1.5,1){\huge{$c$}}
	\uput*{0.3}[90]{0}(10,0){\large{walkway}}
	
	%% People going left
	\uput*{0.15}[90]{0}(2.2,1){\ManLeft}
	\uput*{0.15}[90]{0}(3,1){\ManLeft}
	\uput*{0.15}[90]{0}(3.8,1){\ManLeft}
	\uput*{0.15}[90]{0}(5,1){\ManLeft}
	\uput*{0.15}[90]{0}(8,1){\ManLeft}
	\uput*{0.15}[90]{0}(10,1){\ManLeft}

\end{pspicture}
\caption{The process described above. Particles move to the left with speed $c$ and a particle at $x$ jumps up and catches the rope with rate $w\left(x\right)$. The rope is moving to the right with speed $v >> c$. Particles on the rope let go with rate $v$.}\label{fig:mogy}
\end{figure}
%\afterpage{\clearpage}
It is clear that since the speed of the rope is $v$, the jump length of the particle to the right is exponentially distributed with parameter $v\cdot\frac{1}{v} = 1$, which is the same length as in our model. Furthermore, in the $v\to\infty$ limit the jump is instantaneous, which is exactly what we are studying. Let us denote by $\rho_{d}^{v}\left(x\right)$ and $\rho_{u}^{v}\left(x\right)$ the density of the particles down on the walkway and up on the rope, respectively. The sum of the two is a probability density therefore
\begin{equation*}
\int\limits_{-\infty}^{\infty} \left( \rho_{d}^{v}\left(x\right) + \rho_{u}^{v}\left(x\right) \right) dx = 1.
\end{equation*}
Our plan of action is the following: we calculate the stationary densities of this process for fixed finite $v$, and then look at the result as $v \to \infty$. What we expect is that $\lim_{v\to\infty} \rho_{u}^{v} \left(x\right) = 0$ and $\lim_{v\to\infty} \rho_{d}^{v} \left(x\right) = \rho\left(x\right)$, where $\rho\left(x\right)$ is the solution to \eqref{eq:stationary_exp}. The master equations governing the time evolution of the densities $\rho_{d}^{v}$ and $\rho_{u}^{v}$ are
\begin{subequations}\label{eq:mogy1}
\begin{align}
\pd{\rho_{d}^{v}\left(x\right)}{t} &= c \pd{\rho_{d}^{v}\left(x\right)}{x} - w\left(x\right) \rho_{d}^{v}\left(x\right) + v \rho_{u}^{v}\left(x\right)\label{eq:mogy1a}\\
\pd{\rho_{u}^{v}\left(x\right)}{t} &= \left(-v\right) \pd{\rho_{u}^{v}\left(x\right)}{x} + w\left(x\right) \rho_{d}^{v}\left(x\right) - v \rho_{u}^{v}\left(x\right).\label{eq:mogy1b}
\end{align}
\end{subequations}
The first term on the right hand sides corresponds to the particles that move horizontally (along the walkway or along the rope), while the second term corresponds to the particles that jump up and the third term corresponds to the particles that jump down. Setting the left hand sides to $0$ in \eqref{eq:mogy1} we arrive at the equations for the stationary densities:
\begin{subequations}\label{eq:mogy2}
\begin{align}
0 &= c \pd{\rho_{d}^{v}\left(x\right)}{x} - w\left(x\right) \rho_{d}^{v}\left(x\right) + v \rho_{u}^{v}\left(x\right)\label{eq:mogy2a}\\
0 &= \left(-v\right) \pd{\rho_{u}^{v}\left(x\right)}{x} + w\left(x\right) \rho_{d}^{v}\left(x\right) - v \rho_{u}^{v}\left(x\right).\label{eq:mogy2b}
\end{align}
\end{subequations}
Summing \eqref{eq:mogy2a} and \eqref{eq:mogy2b} we get:
\begin{equation*}
0 = \frac{\partial}{\partial x} \left( c\rho_{d}^{v}\left(x\right) - v\rho_{u}^{v}\left(x\right)  \right),
\end{equation*}
and thus
\begin{equation*}
K = c\rho_{d}^{v}\left(x\right) - v\rho_{u}^{v}\left(x\right)
\end{equation*}
for some $K$ constant. However, taking the limit $x\to\infty$ we see that $K = 0$ and thus
\begin{equation}
\label{eq:up_down}
\rho_{u}^{v}\left(x\right) = \frac{c}{v} \rho_{d}^{v}\left(x\right).
\end{equation}
Putting \eqref{eq:up_down} back into \eqref{eq:mogy2a} and rearranging we arrive at
\begin{equation*}
\pd{\rho_{d}^{v}\left(x\right)}{x} = \left(\frac{1}{c}w\left(x\right)-1\right)\rho_{d}^{v}\left(x\right),
\end{equation*}
from which
\begin{equation*}
\rho_{d}^{v}\left(x\right) = Ke^{\int\limits_{0}^{x} \left(\frac{1}{c}w\left(s\right)-1\right) ds}
\end{equation*}
follows for some $K$ constant. Note how $\rho_{d}^{v}$ is independent of $v$, and also that $\lim_{v\to\infty} \rho_{u}^{v} \left(x\right) = 0$ as expected.
\end{proof}

%%%%%%%%%%%%%%%%%%%%%%

\subsection{Specific examples}

%%%%%%%%%%%%%%%%%%%%%%

In this subsection we look at the stationary distribution $\rho$ of \eqref{eq:stationary_dist} in the case of specific jump rate functions. In all four cases we also determine the speed of the traveling wave from Theorem \ref{thm:exp_stac}. Details of calculations are given only in the case of Example \ref{ex:exp}, the calculations in the other examples are quite simple from Theorem \ref{thm:exp_stac}.

\begin{example}[\textmd{\textit{Exponential jump rate function}}]\label{ex:exp}
If $w\left(x\right)=e^{-\beta x}$, where $\beta > 0$, then from Theorem \ref{thm:exp_stac} we obtain
\begin{equation*}
\rho_{\beta}\left(x\right) = K e^{- x - e^{-\beta\left(x+\frac{\log{\left(c\beta\right)}}{\beta}\right)}},
\end{equation*}
where $K$ is an appropriate normalizing constant. Integrating this over $\mathbb{R}$ gives us the value for $K$ and thus the density becomes
\begin{equation*}
\rho_{\beta}\left(x\right) = \frac{\beta}{\Gamma\left(\frac{1}{\beta}\right)}e^{-\left(x+\frac{\log{\left(c\beta\right)}}{\beta}\right)-e^{-\beta\left(x+\frac{\log{\left(c\beta\right)}}{\beta}\right)}}.
\end{equation*}
To check this let us calculate the integral:
\[
\begin{aligned}
\int\limits_{-\infty}^{\infty}\rho_{\beta}\left(x\right)dx &= \int\limits_{-\infty}^{\infty}\frac{\beta}{\Gamma\left(\frac{1}{\beta}\right)}e^{-\left(x+\frac{\log{\left(c\beta\right)}}{\beta}\right)-e^{-\beta\left(x+\frac{\log{\left(c\beta\right)}}{\beta}\right)}}dx\\
&= \frac{\beta}{\Gamma\left(\frac{1}{\beta}\right)}\int\limits_{-\infty}^{\infty}e^{-y-e^{-\beta y}}dy = \frac{\beta}{\Gamma\left(\frac{1}{\beta}\right)}\int\limits_{\infty}^{0}z^{\frac{1}{\beta}}e^{-z}\left(-\frac{dz}{\beta z}\right) =\frac{1}{\Gamma\left(\frac{1}{\beta}\right)}\int\limits_{0}^{\infty}z^{\frac{1}{\beta}-1}e^{-z}dz = 1,
\end{aligned}
\]
using the substitutions $y = x+\frac{\log{\left(c\beta\right)}}{\beta}$ and $z = e^{-\beta y}$. From Theorem \ref{thm:exp_stac} the speed of the wave $c$ is determined by the fact that $\rho_{\beta}$ is centered, so to compute $c$ we must compute the mean of the density. Using similar substitutions as before we get:
\[
\begin{aligned}
\int\limits_{-\infty}^{\infty}x\rho_{\beta}\left(x\right)dx &= -\frac{\log{\left(c\beta\right)}}{\beta} + \int\limits_{-\infty}^{\infty}\left(x+\frac{\log{\left(c\beta\right)}}{\beta}\right)\rho_{\beta}\left(x\right)dx = -\frac{\log{\left(c\beta\right)}}{\beta} + \frac{\beta}{\Gamma\left(\frac{1}{\beta}\right)}\int\limits_{-\infty}^{\infty}ye^{-y-e^{-\beta y}}dy\\
&= -\frac{\log{\left(c\beta\right)}}{\beta} + \frac{\beta}{\Gamma\left(\frac{1}{\beta}\right)}\int\limits_{\infty}^{0}-\frac{\log{\left(z\right)}}{\beta}z^{\frac{1}{\beta}}e^{-z}\left(-\frac{dz}{\beta z}\right) = -\frac{\log{\left(c\beta\right)}}{\beta} - \frac{1}{\beta\Gamma\left(\frac{1}{\beta}\right)}\int\limits_{0}^{\infty}\log{\left(z\right)}z^{\frac{1}{\beta}-1}e^{-z}dz\\
&= -\frac{1}{\beta}\left(\log{\left(c\beta\right)}+\frac{\Gamma'\left(\frac{1}{\beta}\right)}{\Gamma\left(\frac{1}{\beta}\right)}\right),
\end{aligned}
\]
which is zero when
\begin{equation*}
c=\frac{1}{\beta}e^{-\psi\left(\frac{1}{\beta}\right)},
\end{equation*}
where $\psi$ is the digamma function:
\begin{equation*}
\psi\left(x\right) = \frac{\Gamma'\left(x\right)}{\Gamma\left(x\right)}.
\end{equation*}
Therefore the stationary distribution in this case is given by \eqref{eq:rhobeta1.1}.
%\begin{equation}
%\label{eq:rhobeta1.1}
%\rho_{\beta}\left(x\right) = \frac{\beta}{\Gamma\left(\frac{1}{\beta}\right)}e^{-\left(x-\frac{\psi\left(\frac{1}{\beta}\right)}{\beta}\right)-e^{-\beta\left(x-\frac{\psi\left(\frac{1}{\beta}\right)}{\beta}\right)}}.
%\end{equation}
Note the exponential tail of $\rho_{\beta}$ on one side, and the double-exponential tail on the other side. Specifically, in the case of $\beta = 1$ the stationary distribution is the centered standard Gumbel distribution:
\begin{equation*}
\rho_{1}\left(x\right) = e^{ -\left(x + \gamma\right) - e^{ -\left(x + \gamma\right) } },
\end{equation*}
where $\gamma$ is Euler's constant. Moreover, it can be seen that $\rho_{\beta}$ is a rescaled generalized Gumbel distribution with parameter $1/\beta$ (Bertin \cite{bertin2005gfgs}, Clusel and Bertin \cite{clusel2008global}, Gumbel \cite{gumbel}). As mentioned in Section \ref{sec:results}, the (generalized) Gumbel distribution often arises in extreme value theory, so it is natural to ask whether there is an underlying extremal process in the dynamics. It turns out that there is a connection between the mean field model and extreme value theory when $k = 1/\beta$ is an integer: this is discussed in Section \ref{sec:gumbel}.
\end{example}

\begin{example}[\textmd{\textit{Jump rate function is a step function}}]\label{ex:step}
Let
\begin{equation*}
w\left(x\right) = 
\begin{cases}
a & \text{if $x < 0$,}
\\
b &\text{if $0 \leq x$,}
\end{cases}
\end{equation*}
where $a>b>0$. Then from Theorem \ref{thm:exp_stac} the solution to \eqref{eq:stationary_exp} is
\begin{equation*}
\rho_{a,b}\left(x\right) =
\begin{cases}
K e^{\left(\frac{a}{c} - 1\right) x} & \text{if $x < 0$,}
\\
K e^{\left(\frac{b}{c} - 1\right) x} & \text{if $0 \leq x$,}
\end{cases}
\end{equation*}
where the normalizing constant is
\begin{equation*}
K = \frac{\left(a - c\right) \left(c - b\right)}{c\left(a - b\right)}.
\end{equation*}
The speed of the wave follows from Theorem \ref{thm:exp_stac}:
\begin{equation*}
c = \frac{a+b}{2},
\end{equation*}
and thus the stationary distribution is
\begin{equation}\label{eq:stac_step}
\rho_{a,b}\left(x\right) = \frac{a-b}{2\left(a+b\right)} e^{-\frac{a-b}{a+b}|x|},
\end{equation}
which is a Laplace distribution.
\end{example}

\begin{example}[\textmd{\textit{Jump rate function is ...}}]\label{ex:lin}
Let
\begin{equation}\label{eq:jump_rate_lin}
w\left(x\right) = 
\begin{cases}
a & \text{if $x < 1$,}
\\
-\frac{a-b}{2}x + \frac{a+b}{2} & \text{if $-1 \leq x \leq 1$,}
\\
b &\text{if $1 < x$,}
\end{cases}
\end{equation}
where $a > b > 0$, thus $w$ is a continuous function which is constant on the intervals $(-\infty,-1)$ and $(1,\infty)$ and linear on $\left[-1,1\right]$. Then from Theorem \ref{thm:exp_stac} the solution to \eqref{eq:stationary_exp} is
\begin{equation*}
\rho_{a,b}\left(x\right) = 
\begin{cases}
K e^{\frac{a-b}{4c}+\left(\frac{a}{c}-1\right)x} & \text{if $x < -1$,}
\\
K e^{-\frac{a-b}{4c}x^2+\left(\frac{a+b}{2c}-1\right)x} & \text{if $-1 \leq x \leq 1$,}
\\
K e^{\frac{a-b}{4c}+\left(\frac{b}{c}-1\right)x} &\text{if $1 < x$,}
\end{cases}
\end{equation*}
where K is the normalizing constant. Note that for $c = \frac{a+b}{2}$ the density $\rho_{a,b}$ is symmetric, therefore according to Theorem \ref{thm:exp_stac} this is the speed of the wave. Therefore the stationary distribution is
\begin{equation*}
\rho_{a,b}\left(x\right) = 
\begin{cases}
K e^{\frac{a-b}{2\left(a+b\right)}-\frac{a-b}{a+b}|x|} & \text{if $|x| > 1$,}
\\
K e^{-\frac{a-b}{2\left(a+b\right)}x^2} & \text{if $|x| \leq 1$.}
\end{cases}
\end{equation*}
This distribution has exponential tails, but around $0$ it behaves as a normal distribution.
\end{example}

\begin{example}[\textmd{\textit{Jump rate function is arccotangent}}]\label{ex:arccot}
Let
\begin{equation*}
w\left(x\right) = \arccot{x},
\end{equation*}
where $\arccot{x}$ is defined as the inverse of $\cot{x}$ on the interval $\left(0,\pi\right)$, therefore $w$ is a monotone decreasing smooth function. From Theorem \ref{thm:exp_stac} the solution to \eqref{eq:stationary_exp} is
\begin{equation*}
\rho\left(x\right) = K e^{x\left(\frac{1}{c}\arccot{x} - 1\right) + \frac{1}{2c}\log{\left(1+x^2\right)}},
\end{equation*}
where K is the normalizing constant. Note that for $c = \frac{\pi}{2}$ the density $\rho$ is symmetric, therefore according to Theorem \ref{thm:exp_stac} this is the speed of the wave.
Consequently the stationary distribution is
\begin{equation*}
\rho\left(x\right) = K e^{x\left(\frac{2}{\pi}\arccot{x} - 1\right) + \frac{1}{\pi}\log{\left(1+x^2\right)}}.
\end{equation*}
\end{example}

%%%%%%%%%%%%%%%%%%%%%%

\subsection{Connection with extreme value theory}\label{sec:gumbel}

In the previous subsection we have seen that when $\varphi(x)=e^{-x} \mathbf{1} [x\geq 0]$ and $w\left(x\right) = e^{- \beta x}$ then the (rescaled generalized) Gumbel distribution arises as the stationary distribution of the mean field system. This is interesting, because at first sight there is no extremeness in the model. In this section we discuss the connection between the model and extreme value theory.

\subsubsection{Standard Gumbel case}

Let us start with the standard Gumbel case, i.e.\ $\beta = 1$. Recall that according to our observation in the introduction, we can assume that the center of mass moves with a constant velocity $c$, and we consider a single mean field particle $X(t)$ that jumps with rate $e^{ct - X\left(t\right)}$, so
\begin{equation*}
\p\left(X\left(t\right) \text{ changes between } t \text{ and } t+dt\right) =  e^{ct - X\left(t\right)} dt + o\left(dt\right).
\end{equation*}
When this jump occurs, $X(t)$ increases by a random number drawn from an exponential distribution with parameter 1 (denoted by Exp(1) in the sequel).

How can we view this as an extreme process? The answer to this question was given by Attila R\'akos, whose idea is the following. As time goes forward, let us take gradually more and more new independent and identically distributed Exp(1) random variables, and let us look at the distribution of the maximum of these random variables, which we denote by $Y\left(t\right)$. Specifically, at time $t$ let there be $N\left(t\right) = e^{ct} / c$ random variables altogether, so $Y\left(t\right)$ is the maximum of $N\left(t\right)$ i.i.d.\ random variables. (In this subsection we neglect mathematical precision (such as the fact that we should have defined $N\left(t\right)$ to be integer-valued: $N\left(t\right) = \left[e^{ct} / c\right]$) for the sake of clarity of the argument.) There are $N'\left(t\right) dt = e^{ct} dt$ new random variables trying to ``break the record'' between $t$ and $t + dt$, and thus
\begin{align*}
\p\left(Y\left(t\right) \text{ changes between } t \text{ and } t+dt\right) &= 1 - \left( 1 - e^{-Y\left(t\right)} \right)^{N'\left(t\right) dt}\\
&=  N'\left(t\right) dt e^{-Y\left(t\right)} + o\left(N'\left(t\right) dt e^{-Y\left(t\right)}\right)\\
&= e^{ct - Y\left(t\right)} dt + o\left(e^{ct - Y \left( t \right)} dt\right),
\end{align*}
and if $Y\left(t\right)$ changes, then it jumps according to Exp(1). Here we use the memoryless property of the exponential distribution, i.e.\ the fact that if $T$ is exponentially distributed then
\begin{equation*}
\p\left( T > s+t\; | \; T > s \right) = \p\left( T > t\right).
\end{equation*}

Thus we can conclude that the two processes are basically the same. Furthermore, we know that $Y\left(t\right) - \log N\left(t\right)$ follows the standard Gumbel distribution for large $t$ (see, for example, Gumbel \cite{gumbel} or the excellent survey by Clusel and Bertin \cite{clusel2008global}):
\begin{equation*}
\lim\limits_{t\to\infty} \p\left(Y\left(t\right) - \log N\left(t\right) < x \right) = e^{-e^{-x}},
\end{equation*}
and therefore since $\log N\left(t\right) = ct - \log c$, it follows that
\begin{equation*}
\lim\limits_{t\to\infty} \p\left(Y\left(t\right) - ct < x \right) = e^{-e^{-\left(x + \log c \right)}}.
\end{equation*}
Thus by the connection we have
\begin{equation*}
\lim\limits_{t\to\infty} \p\left(X\left(t\right) - ct < x \right) = e^{-e^{-\left(x + \log c \right)}},
\end{equation*}
and since the distribution of $X\left(t\right) - ct$ is centered, it follows that
\begin{equation*}
c = e^{-\gamma},
\end{equation*}
where $\gamma = 0.5772\dots$ is Euler's constant.

\subsubsection{Generalized Gumbel case}

The idea above extends to the general case with a slight modification. In the following let $k = 1/ \beta$ be a fixed positive integer. We use $k$ and $1 / \beta$ interchangeably.

In general the mean field particle $X\left(t\right)$ jumps with rate $e^{\beta \left( ct - X\left(t\right) \right) }$, so
\begin{equation*}
\p\left(X\left(t\right) \text{ changes between } t \text{ and } t+dt\right) =  e^{\beta \left( ct - X\left(t\right) \right)} dt + o\left(dt\right),
\end{equation*}
and if it changes then it jumps according to Exp(1).

How can we view this as an extreme process? The idea is similar to the one in the previous section. Once again, as time goes forward, let us take gradually more and more new independent and identically distributed Exp(1) random variables. Specifically, at time $t$ let there be $N\left( t \right) = e^{\beta c t} / \beta c$ random variables altogether. Denote by $Y_{k}\left( t \right)$ the $k^{\text{th}}$ largest value among these random variables at time $t$. We focus on $Y\left( t \right) := k Y_{k} \left( t \right)$. There are $N'\left( t \right) dt = e^{\beta ct} dt$ new random variables trying to ``break the record'' between $t$ and $t+dt$, and since $Y\left( t \right)$ changes exactly at the same time $Y_{k}\left( t \right)$ does, we have that
\begin{align*}
\p\left(Y\left(t\right) \text{ changes between } t \text{ and } t+dt\right) &= 1 - \left( 1 - e^{-Y_{k}\left(t\right)} \right)^{N'\left(t\right) dt}\\
&=  N'\left(t\right) dt e^{-Y_{k}\left(t\right)} + o\left(N'\left(t\right) dt e^{-Y_{k}\left(t\right)}\right)\\
&= e^{\beta ct - Y_{k}\left(t\right)} dt + o\left(e^{\beta ct - Y_{k} \left( t \right)} dt\right)\\
&= e^{\beta \left( ct - Y\left( t \right) \right)} dt + o\left(e^{\beta \left( ct - Y\left( t \right) \right)} dt\right),
\end{align*}
and if $Y\left( t \right)$ changes, then it jumps according to Exp(1). This follows directly from the fact that when $Y_{k} \left( t \right)$ changes, then it jumps according to Exp($k$), which is due to the following: $Y_{k}\left( t \right)$ changes when a new random variable jumps past the previous $k^{\text{th}}$ largest r.v., making the previous $k^{\text{th}}$ largest r.v.\ the new $\left(k+1\right)^{\text{st}}$ largest r.v., and thus the length of the jump that $Y_{k}\left( t \right)$ makes is the difference between the new $k^{\text{th}}$ and $\left(k+1\right)^{\text{st}}$ largest random variables. Since the random variables are i.i.d.\ Exp(1) variables, we know that this difference is exponentially distributed with parameter $k$ (see, for example, Bertin \cite{bertin2005gfgs}).

Thus we can conclude that the two processes $X\left( t \right)$ and $Y\left( t \right)$ are basically the same. Furthermore, we know that $Y_{k}\left( t \right)$, once rescaled, follows the generalized Gumbel distribution for large $t$. Specifically (see Clusel and Bertin \cite{clusel2008global} for details):
\begin{equation*}
\lim\limits_{t\to\infty} \p\left(Y_{k}\left(t\right) - \log N\left(t\right) \in \left( x,x+dx \right) \right) = \frac{1}{ \left(k-1\right)! } e^{ - kx - e^{-x}}dx.
\end{equation*}
Thus for $Y\left( t \right)$ we have
\[
\lim\limits_{t\to\infty} \p\left(Y\left(t\right) - k \log N\left(t\right) \in \left( x,x+dx \right) \right) = \frac{1}{ \left(k-1\right)! } e^{ - k\frac{x}{k} - e^{-\frac{x}{k}}} \frac{dx}{k} = \frac{1/k}{ \Gamma\left( k \right) } e^{ - x - e^{ - \frac{1}{k} x } } dx,
\]
and therefore for the mean field particle $X\left( t \right)$ we have that
\begin{equation*}
\lim\limits_{t\to\infty} \p\left(X\left(t\right) - \frac{1}{\beta} \log N\left(t\right) \in \left( x,x+dx \right) \right) = \frac{\beta}{ \Gamma\left( 1/ \beta \right) } e^{ - x - e^{ - \beta x } } dx,
\end{equation*}
so using the fact that $\log N\left(t\right) = \beta ct - \log \beta c$, we finally obtain
\begin{equation*}
\lim\limits_{t\to\infty} \p\left(X\left(t\right) - ct \in \left( x,x+dx \right) \right) = \frac{\beta}{ \Gamma\left( 1/ \beta \right) } e^{ - \left( x + \frac{\log \beta c}{\beta} \right) - e^{ - \beta \left( x + \frac{\log \beta c}{\beta} \right) } } dx.
\end{equation*}
Since the distribution of $X\left( t \right) - ct$ is centered, it follows that
\begin{equation*}
c = \frac{1}{\beta} e^{ - \psi \left(\frac{1}{\beta}\right) },
\end{equation*}
where $\psi$ is the digamma function:
\begin{equation*}
\psi \left(x\right) = \frac{ \Gamma'\left(x\right) }{ \Gamma\left(x\right) }.
\end{equation*}

Thus we have rediscovered the results of the previous subsection through extreme value theory arguments.

\section{Fluid limit}\label{sec:fluid_limit}

In this section we prove Theorem \ref{thm:main}. The section is organized as follows. In order to present and prove our results, we need to introduce notation, terminology, definitions and previously known results. This necessary background is introduced in Section \ref{ch:Preliminaries}. Then in Section \ref{ch:Results} we formulate as separate theorems several intermediate steps that lead to the main result and we prove Theorem \ref{thm:main} using these steps. In the following subsections we then provide proofs of the intermediate steps stated in Section \ref{ch:Results}.

%%%%%%%%%%%%%%%%%%%%%%%%%%%%%%%%%%%%%%%%%%%%%%%%%%%%%%%%%%%%%%%%%%%%%%%%%
%%%%%%%%%%%%%%%%%%%%%%%%%%%%%%%%%%%%%%%%%%%%%%%%%%%%%%%%%%%%%%%%%%%%%%%%%

\subsection{Preliminaries}\label{ch:Preliminaries}

Recall that our goal is to show that---given some assumptions---the path of the empirical measure process $\left\{\mu_{n} \left( \cdot \right) \right\}_{n \geq 1}$ \emph{converges weakly} in the Skorohod space $D\left( [0,\infty), \mathcal{P}_{1} \left( \mathbb{R} \right) \right)$ to $\mu\left( \cdot \right)$, which solves the mean field equation \eqref{eq:Atf}. The goal of this subsection is to provide all the necessary preliminaries that we need during the proof.

We present important results from the theory of weak convergence that we use later; in particular, we focus on tightness results in Skorohod spaces. For an excellent introduction to weak convergence, we refer the reader to Billingsley \cite{billingsley1999convergence}. Many results that we use are taken from Ethier and Kurtz \cite{ethier1986markov}, and Jacod and Shiryaev \cite{jacod1987limit} is also a valuable source of information concerning convergence of processes.

As in Theorem \ref{thm:main}, from now on we assume that the jump rate function $w$ is bounded, and we denote its lowest upper bound by $a$. To put it another way: $a:= \lim_{x \to -\infty} w\left( x \right) = \sup_x w\left( x \right)$. We also assume that $w$ is differentiable and has a bounded derivative. Furthermore, in the following $Z$ always denotes a random variable from the jump length distribution. As before, we assume that $\E \left( Z \right) = 1$ and that $Z$ has a finite third moment.

%%%%%%%%%%%%%%%%%%%%%%%%%%%%%%%%%%%%%%%%%%%%%%%%%%%%%%%%%%%%%%%%%%%%%%%%%
%%%%%%%%%%%%%%%%%%%%%%%%%%%%%%%%%%%%%%%%%%%%%%%%%%%%%%%%%%%%%%%%%%%%%%%%%

\subsubsection{Probability metrics}\label{ch:ProbMetrics}

Recall the definition \eqref{eq:was_metric} of the 1-Wasserstein metric. The following lemma (from Feng and Kurtz \cite[Appendix D]{feng2006large}) summarizes what we need to know about the metric space $\left( \mathcal{P}_{1} \left( \mathbb{R} \right), d_{1} \right)$. More can be found in e.g.\ Feng and Kurtz \cite[Appendix D]{feng2006large}.
\begin{lemma}\label{lem:was_metric}
$\left( \mathcal{P}_{1} \left( \mathbb{R} \right), d_{1} \right)$ is a complete and separable metric space. For $\mu_{n}, \mu \in \mathcal{P}_{1} \left( \mathbb{R} \right)$, the following are equivalent:
\begin{enumerate}[(1)]
\item $\lim_{n \to \infty} d_{1} \left( \mu_{n}, \mu \right) = 0$.
\item For all continuous functions $\varphi$ on $\mathbb{R}$ satisfying $\left| \varphi \left( x \right) \right| \leq C \left( 1 + \left|x \right| \right)$ for some $C > 0$, 
\begin{equation*}
\lim\limits_{n \to \infty} \left\langle \varphi, \mu_{n} \right\rangle = \left\langle \varphi, \mu \right\rangle.
\end{equation*}
\end{enumerate}
\end{lemma}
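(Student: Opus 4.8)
The statement to prove is Lemma~\ref{lem:was_metric}, which asserts that $\left( \mathcal{P}_{1}\left(\mathbb{R}\right), d_1 \right)$ is a complete separable metric space and that $d_1$-convergence is equivalent to weak convergence together with convergence of first moments (equivalently, testing against all functions of at most linear growth). Since this is a standard fact about Wasserstein spaces, the plan is to quote it essentially verbatim from the literature and supply only the short glue needed to make the two formulations match. Concretely, I would cite Feng and Kurtz \cite[Appendix~D]{feng2006large} (or alternatively Villani's book, or Rachev--R\"uschendorf), where precisely this package of statements is proved, and then spend the bulk of the argument on the equivalence (1)$\Leftrightarrow$(2).

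\textbf{Key steps.} First I would recall that $d_1$ is genuinely a metric: symmetry is obvious from the symmetry of $|x-y|$, the triangle inequality follows from the standard gluing lemma for couplings (given optimal couplings of $(\mu,\nu)$ and $(\nu,\lambda)$, glue them along $\nu$ to produce a coupling of $(\mu,\lambda)$), and $d_1(\mu,\nu)=0$ forces $\mu=\nu$ because it implies $\langle f,\mu\rangle = \langle f,\nu\rangle$ for all Lipschitz $f$, which separate measures. Completeness and separability: separability follows because finitely supported measures with rational weights and rational atoms are $d_1$-dense (approximate a general $\mu\in\mathcal P_1$ by truncation and discretization, controlling the tail in first-moment norm using finiteness of $\int|x|\,\mu(dx)$); completeness follows from the Kantorovich--Rubinstein duality $d_1(\mu,\nu) = \sup\{\langle f,\mu\rangle - \langle f,\nu\rangle : \mathrm{Lip}(f)\le 1\}$ together with a tightness argument showing a $d_1$-Cauchy sequence has a weak limit whose first moments also converge. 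For the equivalence (1)$\Leftrightarrow$(2): the direction (2)$\Rightarrow$(1) follows by applying (2) with $\varphi(x)=x$ and with all bounded continuous $\varphi$ to get weak convergence plus $\langle |x|,\mu_n\rangle \to \langle |x|,\mu\rangle$, and then invoking the known characterization that weak convergence plus convergence of the first absolute moment is equivalent to $d_1$-convergence (again \cite[Appendix~D]{feng2006large}); the direction (1)$\Rightarrow$(2) follows from duality, since $d_1(\mu_n,\mu)\to 0$ gives $\langle f,\mu_n\rangle\to\langle f,\mu\rangle$ for all Lipschitz $f$ (hence in particular $f=\mathrm{Id}$ and all bounded Lipschitz $f$), giving weak convergence and first-moment convergence, and then a uniform integrability argument upgrades this to convergence of $\langle\varphi,\mu_n\rangle$ for all $\varphi$ with $|\varphi(x)|\le C(1+|x|)$: split $\varphi = \varphi\mathbf{1}[|x|\le R] + \varphi\mathbf{1}[|x|> R]$, use weak convergence and a continuity/boundedness argument on the first piece, and bound the second piece by $C\langle (1+|x|)\mathbf{1}[|x|>R], \mu_n\rangle$, which is small uniformly in $n$ because first moments converge (hence $\{|x|\}$ is uniformly integrable along the sequence).

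\textbf{Main obstacle.} There is no serious mathematical obstacle here; the only real work is bookkeeping to translate between the ``weak convergence $+$ first moment convergence'' formulation that appears in most references and the ``test against all $\varphi$ of linear growth'' formulation used in item~(2), and this is handled by the uniform integrability splitting argument sketched above. The one point requiring a little care is the uniform integrability step in (1)$\Rightarrow$(2): one must argue that $\sup_n \langle |x|\mathbf{1}[|x|>R],\mu_n\rangle \to 0$ as $R\to\infty$, which follows from $\langle|x|,\mu_n\rangle\to\langle|x|,\mu\rangle<\infty$ combined with weak convergence (so that, e.g., $\langle |x|\wedge R,\mu_n\rangle\to\langle|x|\wedge R,\mu\rangle$ for each fixed $R$, and then a standard $\eps$/diagonal argument closes it). Since all of this is classical, in the paper I would simply state the lemma, attribute it to \cite[Appendix~D]{feng2006large}, and remark that the only non-immediate implication (1)$\Rightarrow$(2) for $\varphi$ of linear growth follows from uniform integrability of $\{|x|\}$ under the sequence $\{\mu_n\}$, which is guaranteed by $d_1$-convergence.
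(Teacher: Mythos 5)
Your proposal matches the paper's treatment: the paper states this lemma without proof, citing it to Feng and Kurtz \cite[Appendix~D]{feng2006large}, exactly as you do. Your supplementary sketch of the duality, completeness/separability, and uniform-integrability steps is correct and standard, but goes beyond what the paper itself records, which is simply the citation.
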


We introduce another probability metric that we use later on.
For $P, Q \in \mathcal{P}_{1} \left( \mathbb{R} \right)$ and a set $\mathcal{H}$ of functions define
\begin{equation}\label{eq:prob_metric}
d_{\mathcal{H}}\left( P, Q \right) = \sup\limits_{f \in \mathcal{H}} \left| \left\langle f, P \right\rangle - \left\langle f, Q \right\rangle \right|.
\end{equation}
It is immediate that $d_{\mathcal{H}}$ is a pseudometric on $\mathcal{P}_{1}\left( \mathbb{R} \right)$: $d_{\mathcal{H}}\left( P, P \right) = 0$, $d_{\mathcal{H}} \left( P, Q \right) = d_{\mathcal{H}} \left( Q, P \right)$, and the triangle inequality also holds. The only thing that does not necessarily hold is that $d_{\mathcal{H}} \left( P, Q \right) = 0$ implies $P = Q$. In order for this to hold, $\mathcal{H}$ has to be a large enough set of functions to be able to distinguish between different probability measures. There are various choices of $\mathcal{H}$ that give rise to notable metrics. For instance, if $\mathcal{H}$ is the set of indicator functions of measurable sets, then $d_{\mathcal{H}}$ is the total variation metric. If $\mathcal{H}$ is the set of half-line indicator functions then $d_{\mathcal{H}}$ is the Kolmogorov (uniform) metric. The choice of $\mathcal{H} = \left\{ f \in C_{b}:\ \left| f \right| \leq 1 \right\}$ gives the Radon metric (that this is a metric follows from e.g.\ Billingsley \cite[Theorem 1.2.]{billingsley1999convergence}). Consequently the choice $\mathcal{H} = H$ (see \eqref{eq:H_def}) also yields a metric. This is what we use in the following: $d_H$ denotes the metric on $\mathcal{P}_{1} \left( \mathbb{R} \right)$ given by \eqref{eq:prob_metric} with $\mathcal{H} = H$. This metric is used in Theorem \ref{thm:cont_dep_init_cond}, Corollary \ref{cor:uniqueness} and their proofs in Section \ref{ch:Uniqueness}. For more on probability metrics we refer the reader to Gibbs and Su \cite{gibbs2002choosing} for an excellent review.

%%%%%%%%%%%%%%%%%%%%%%%%%%%%%%%%%%%%%%%%%%%%%%%%%%%%%%%%%%%%%%%%%%%%%%%%%
%%%%%%%%%%%%%%%%%%%%%%%%%%%%%%%%%%%%%%%%%%%%%%%%%%%%%%%%%%%%%%%%%%%%%%%%%

\subsubsection{Skorohod spaces}

Let $\left( E, r \right)$ be a general metric space. Denote by $C\left( [0,\infty), E \right)$ the space of continuous $E$-valued paths with the uniform topology. The \emph{Skorohod space} $D\left( [0,\infty), E \right)$ consists of $E$-valued paths that are right-continuous and have left limits. We do not go into details about the topology on $D\left( [0,\infty), E \right)$, but we note that there exists a metric under which $D\left( [0,\infty), E \right)$ is a complete and separable metric space if $\left( E, r \right)$ is complete and separable (Ethier and Kurtz \cite[Chapter 3]{ethier1986markov}). The topology generated by this metric is called the Skorohod topology on $D\left( [0,\infty), E \right)$. Note that $C\left( [0,\infty), E \right)$ is a subset of $D\left( [0,\infty), E \right)$ and it can be seen that the Skorohod topology relativized to $C\left( [0,\infty), E \right)$ coincides with the uniform topology there. For more on the Skorohod space we refer the reader to Billingsley \cite[Chapter 3]{billingsley1999convergence} and Ethier and Kurtz \cite[Chapter 3]{ethier1986markov}.

The reason we introduced the Skorohod space in general is because the natural space in which to consider $\mu_{n} \left( \cdot \right)$ is $D\left( [0,\infty), \mathcal{P}_{1} \left( \mathbb{R} \right) \right)$: we know that $\mu_{n} \left( \cdot \right)$ is a piece-wise constant function in $\mathcal{P}_{1} \left( \mathbb{R} \right)$, having jumps whenever a particle jumps. (We can define $\mu_{n} \left( \cdot \right)$ to be right-continuous at its discontinuities.) Since $\mathcal{P}_{1} \left( \mathbb{R} \right)$ is complete and separable under the Wasserstein metric, $D\left( [0,\infty), \mathcal{P}_{1} \left( \mathbb{R} \right) \right)$ is also complete and separable under an appropriate metric. Similarly, for any test function $f$ (for instance $f \in H$), the natural space in which to consider $\left\langle f, \mu_{n} \left( \cdot \right) \right\rangle$ is the Skorohod space $D\left( [0,\infty), \mathbb{R} \right)$.

%%%%%%%%%%%%%%%%%%%%%%%%%%%%%%%%%%%%%%%%%%%%%%%%%%%%%%%%%%%%%%%%%%%%%%%%%
%%%%%%%%%%%%%%%%%%%%%%%%%%%%%%%%%%%%%%%%%%%%%%%%%%%%%%%%%%%%%%%%%%%%%%%%%

\subsubsection{Tightness in Skorohod spaces}

Recall our plan of action from the end of Section \ref{sec:results}:
\begin{itemize}
 \item Prove tightness of $\left\{ \mu_{n}\left( \cdot \right) \right\}_{n\geq 1}$ in the Skorohod space $D\left( [0,\infty), \mathcal{P}_{1}\left( \mathbb{R} \right) \right)$.
 \item Identify the limit of $\left\{ \mu_{n}\left( \cdot \right) \right\}_{n\geq 1}$: here we essentially have to show that the limit solves the deterministic mean field equation.
 \item Prove uniqueness of the solution to the mean field equation with a given initial condition.
\end{itemize}
In view of the first point, our goal here is to find checkable sufficient conditions for tightness in Skorohod spaces. Since all metric spaces we deal with are complete and separable, by Prohorov's theorem the concepts of tightness and relative compactness are equivalent in our setting, and therefore we use these two notions interchangeably. Following Perkins \cite{perkins1999dawson}, we say that a sequence is $C$-relatively compact if it is relatively compact and all weak limit points are a.s.\ continuous.

%%%%%%%%%%%%%%%%%%%%%%%%%%%%%%%%%%%%%%%%%%%%%%%%%%%%%%%%%%%%%%%%%%%%%%%%%
%%%%%%%%%%%%%%%%%%%%%%%%%%%%%%%%%%%%%%%%%%%%%%%%%%%%%%%%%%%%%%%%%%%%%%%%%

\bigskip\noindent\textbf{Tightness in $D\left( [0, \infty), \mathbb{R} \right)$}

\bigskip\noindent First, we state conditions for tightness in $D\left( [0, \infty), \mathbb{R} \right)$. For $y\left( \cdot \right) \in  D\left( [0, \infty), \mathbb{R} \right)$ define
\begin{equation}\label{eq:biggest_jump_until_T}
J_{T} \left( y \left( \cdot \right) \right) = \sup\limits_{0 \leq t \leq T} \left| y\left( t \right) - y\left( t- \right) \right|,
\end{equation}
the largest (absolute) jump in $y\left( \cdot \right)$ until $T$. Let $\left\{ Y_{n} \left( \cdot \right) \right\}_{n\geq 1}$ be a sequence of random variables in $D\left( [0, \infty), \mathbb{R} \right)$. The following theorem, which is a combination of Billingsley \cite[Theorem 16.8.]{billingsley1999convergence} and the corollary thereafter, characterizes tightness in $D\left( [0, \infty), \mathbb{R} \right)$.
\begin{theorem}\label{thm:tightness_thm_billingsley}
The sequence $\left\{ Y_{n} \left( \cdot \right) \right\}_{n\geq 1}$ is tight in $D\left( [0, \infty), \mathbb{R} \right)$ if and only if the following three conditions hold:
\begin{enumerate}[(i)]
\item 
\begin{equation}\label{eq:B1}
\lim\limits_{L \to \infty} \limsup\limits_{n \to \infty} \p \left( \left| Y_{n}\left( 0 \right) \right| \geq L \right) = 0.
\end{equation}
\item For each $T \geq 0$,
\begin{equation}\label{eq:B2}
\lim\limits_{L \to \infty} \limsup\limits_{n \to \infty} \p \left( J_{T} \left( Y_{n} \left( \cdot \right) \right) \geq L \right) = 0.
\end{equation}
\item For each $T \geq 0$ and $\eps > 0$,
\begin{equation}\label{eq:B3}
\lim\limits_{\delta \to 0} \limsup\limits_{n \to \infty} \p \left( \inf\limits_{\left\{t_{i}\right\}} \max\limits_{1 \leq i \leq v} \sup\limits_{s,t \in [t_{i-1},t_{i})} \left| Y_{n} \left( t \right) - Y_{n} \left( s \right) \right| \geq \eps \right) = 0,
\end{equation}
where the infimum extends over all decompositions $[t_{i-1},t_{i}), 1\leq i \leq v$, of $[0,T)$ such that $t_{i} - t_{i-1} > \delta$ for $1 \leq i < v$.
\end{enumerate}
\end{theorem}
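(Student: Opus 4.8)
The statement is essentially a repackaging of the tightness criterion for $D\left([0,\infty),\mathbb{R}\right)$ in Billingsley \cite[Theorem 16.8]{billingsley1999convergence} together with the corollary following it, so the plan is to assemble it from two ingredients: Prohorov's theorem (tightness $\iff$ relative compactness, valid because $D\left([0,\infty),\mathbb{R}\right)$ is Polish) and the Arzel\`a--Ascoli-type description of (pre)compact subsets of $D\left([0,\infty),\mathbb{R}\right)$. Recall that $A \subseteq D\left([0,\infty),\mathbb{R}\right)$ has compact closure if and only if for every $T \ge 0$ one has $\sup_{y \in A}\sup_{0 \le t \le T}\left|y(t)\right| < \infty$ and $\lim_{\delta \to 0}\sup_{y \in A} w'_T(y,\delta) = 0$, where $w'_T(y,\delta)$ denotes the inner infimum $\inf_{\{t_i\}}\max_{1\le i\le v}\sup_{s,t\in[t_{i-1},t_i)}\left|y(t)-y(s)\right|$ appearing inside the probability in \eqref{eq:B3}. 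The whole argument then reduces to translating these two uniform conditions on $A$ into the three stochastic conditions \eqref{eq:B1}--\eqref{eq:B3}.

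For the \emph{necessity} direction I would assume $\{Y_n(\cdot)\}$ is tight, fix $\eps > 0$, and choose a compact $K \subseteq D\left([0,\infty),\mathbb{R}\right)$ with $\inf_n \p\left(Y_n(\cdot) \in K\right) > 1-\eps$. Condition \eqref{eq:B3} is then immediate, since on $K$ the modulus $w'_T(\cdot,\delta)$ tends to $0$ uniformly as $\delta \to 0$, so the probability in \eqref{eq:B3} is eventually at most $\eps$. For \eqref{eq:B1} and \eqref{eq:B2}, use that $C_T := \sup_{y\in K}\sup_{t\le T}\left|y(t)\right| < \infty$; then on the event $\{Y_n(\cdot) \in K\}$, which has probability $> 1-\eps$ uniformly in $n$, one has $\left|Y_n(0)\right| \le C_0$ and $J_T\left(Y_n(\cdot)\right) \le 2C_T$, giving both \eqref{eq:B1} and \eqref{eq:B2}. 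Letting $\eps \to 0$ closes this direction.

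For the \emph{sufficiency} direction the key elementary observation is an oscillation bound: if $0 = t_0 < t_1 < \cdots < t_v = T$ is $\delta$-sparse (so $v \le T/\delta + 1$) and $y$ oscillates by less than $\eta$ on each $[t_{i-1},t_i)$, then propagating the estimate across the partition endpoints, where the only extra increment is a jump of size at most $J_T(y)$, yields $\sup_{0\le t\le T}\left|y(t)\right| \le \left|y(0)\right| + v\left(\eta + J_T(y)\right)$. Hence, controlling $\left|Y_n(0)\right|$ via \eqref{eq:B1}, $J_T\left(Y_n(\cdot)\right)$ via \eqref{eq:B2}, and $w'_T(Y_n,\delta)$ via \eqref{eq:B3} (with $\delta$ \emph{fixed}) gives control of $\sup_{t\le T}\left|Y_n(t)\right|$ in probability. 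I would then run the standard diagonal construction over $T = 1,2,3,\dots$: for each $m$ pick $\delta_m$ with $\limsup_n \p\left(w'_m(Y_n,\delta_m) \ge 1/m\right) < \eps\,2^{-m-2}$, and then a constant $C_m$ with $\limsup_n \p\left(\sup_{t\le m}\left|Y_n(t)\right| \ge C_m\right) < \eps\,2^{-m-2}$ using the oscillation bound; let $K$ be the closure of $\left\{y : \sup_{t\le m}\left|y(t)\right| \le C_m \text{ and } w'_m(y,\delta_m) \le 1/m \text{ for all } m\right\}$, which is compact by the description above. Then $\limsup_n \p\left(Y_n(\cdot) \notin K\right) < \eps$, and since each individual law is a Borel probability measure on a Polish space, hence tight, one may enlarge $K$ to a compact set absorbing the finitely many exceptional small $n$ with the same $\eps$, which establishes tightness.

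The only genuinely non-formal point will be the oscillation bound that links the ``local at $t=0$ plus jump size'' data in \eqref{eq:B1}--\eqref{eq:B2} with the ``global supremum over $[0,T]$'' that appears in Billingsley's original formulation; once that is in hand the rest is bookkeeping with Prohorov's theorem and the compactness criterion. Note in particular that \eqref{eq:B3} controls $w'_T$ but \emph{not} the sizes of individual jumps — a large jump placed at a partition endpoint costs nothing in $w'_T$ — so condition \eqref{eq:B2} really is indispensable in the sufficiency direction, which is why the three conditions naturally appear together.
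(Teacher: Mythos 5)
The paper does not prove Theorem~\ref{thm:tightness_thm_billingsley} at all --- it is stated as a known result with a citation to Billingsley \cite[Theorem 16.8 and the corollary following it]{billingsley1999convergence}, and no proof appears in the text. Your blind reconstruction is correct and follows the standard textbook route: Prohorov's theorem plus the Arzel\`a--Ascoli characterization of relatively compact subsets of $D\left([0,\infty),\mathbb{R}\right)$, with the oscillation bound $\sup_{0\le t\le T}\left|y(t)\right| \le \left|y(0)\right| + v\left(\eta + J_T(y)\right)$ correctly supplying the step that replaces Billingsley's condition on $\sup_{t\le T}\left|Y_n(t)\right|$ by the pair \eqref{eq:B1}--\eqref{eq:B2}, which is precisely what the cited corollary does.
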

We now look at a condition which implies that any limit point is continuous, i.e.\ is in $C\left( [0,\infty), \mathbb{R} \right)$. Following Ethier and Kurtz \cite[p. 147]{ethier1986markov}, for $y\left( \cdot \right) \in  D\left( [0, \infty), \mathbb{R} \right)$ define
\begin{equation}\label{eq:jump_def}
J\left( y\left( \cdot \right) \right) = \int\limits_{0}^{\infty} e^{-u} \min\left\{ J_{u}\left( y\left( \cdot \right) \right), 1 \right\} du,
\end{equation}
where $J_{u}\left( y\left( \cdot \right) \right)$ is defined as in \eqref{eq:biggest_jump_until_T}. The following theorem characterizes continuity of a weak limit point (Ethier and Kurtz \cite[Theorem 3.10.2.]{ethier1986markov}).
\begin{theorem}\label{thm:weak_limit_cont}
Suppose $Y_{n} \left( \cdot \right) \Rightarrow_{n} Y \left( \cdot \right)$ in $D\left( [0, \infty), \mathbb{R} \right)$. Then $Y\left( \cdot \right)$ is almost surely continuous if and only if $J\left( Y_{n} \left( \cdot \right) \right) \Rightarrow_{n} 0$.
\end{theorem}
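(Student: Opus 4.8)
The plan is to reduce the theorem to two properties of the functional $J$ of \eqref{eq:jump_def} and then apply the continuous mapping theorem. The two properties are: (a) for $y(\cdot) \in D([0,\infty),\mathbb{R})$, one has $J(y(\cdot)) = 0$ if and only if $y(\cdot)$ is continuous; and (b) $J : D([0,\infty),\mathbb{R}) \to [0,1]$ is continuous in the Skorohod topology. Property (a) is immediate: $u \mapsto J_u(y(\cdot))$ (see \eqref{eq:biggest_jump_until_T}) is nonnegative and nondecreasing, so $\int_0^\infty e^{-u}\min\{J_u(y(\cdot)),1\}\,du = 0$ if and only if $J_u(y(\cdot)) = 0$ for every $u$, i.e.\ if and only if $y(\cdot)$ has no jumps. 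Given (a) and (b): by (b) and the continuous mapping theorem, $Y_n(\cdot) \Rightarrow_n Y(\cdot)$ forces $J(Y_n(\cdot)) \Rightarrow_n J(Y(\cdot))$. If $Y(\cdot)$ is a.s.\ continuous then $J(Y(\cdot)) = 0$ a.s.\ by (a), so $J(Y_n(\cdot)) \Rightarrow_n 0$; conversely, if $J(Y_n(\cdot)) \Rightarrow_n 0$ then $J(Y(\cdot))$ has law $\delta_0$, hence $J(Y(\cdot)) = 0$ a.s., and $Y(\cdot)$ is a.s.\ continuous by (a).

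The substance is property (b), and this is where I expect the main difficulty. The natural route is to localize to finite horizons: since the integrand in \eqref{eq:jump_def} is bounded by $e^{-u}$, by dominated convergence it suffices to show that $y_n(\cdot) \to y(\cdot)$ in the Skorohod topology implies $J_u(y_n(\cdot)) \to J_u(y(\cdot))$ for every $u$ that is not a jump time of $y(\cdot)$ --- only countably many $u$ are excluded, hence negligible for $e^{-u}\,du$. For this convergence of maximal jump sizes I would use the time-change characterization of Skorohod convergence ($y_n\circ\lambda_n \to y$ uniformly on compacts, with $\lambda_n$ increasing homeomorphisms satisfying $\|\lambda_n - \mathrm{id}\| \to 0$), combined with two basic facts: (i) a jump of $y(\cdot)$ of size $\Delta$ at a time $s$ forces, for large $n$, a jump of $y_n(\cdot)$ of size arbitrarily close to $\Delta$ at a time arbitrarily close to $s$, giving $\liminf_n J_u(y_n(\cdot)) \ge J_u(y(\cdot))$ whenever $u > s$; and (ii) near a continuity time of $y(\cdot)$ all jumps of $y_n(\cdot)$ are uniformly small, so no spurious large jump of $y_n(\cdot)$ can sit at a time below a continuity time $u$, giving $\limsup_n J_u(y_n(\cdot)) \le J_u(y(\cdot))$. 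The delicate point --- and the reason one integrates over $u$ instead of fixing a single horizon $T$ --- is that this can fail exactly at a jump time $s$ of $y(\cdot)$: a jump of $y_n(\cdot)$ located slightly to the right of $s$ escapes $J_s$ while still converging to $s$, so $J_s$ need not be lower semicontinuous; averaging over $u$ discards this single bad value.

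It is worth noting that the full strength of (b) is not needed for each direction separately, which gives some flexibility in the write-up. The implication ``$Y(\cdot)$ a.s.\ continuous $\Rightarrow J(Y_n(\cdot)) \Rightarrow_n 0$'' uses only continuity of $J$ at continuous paths, which is elementary: Skorohod convergence to a continuous limit is uniform on compacts, whence $J_u(y_n(\cdot)) \le 2\sup_{t\le u}|y_n(t) - y(t)| \to 0$ and dominated convergence finishes it. The reverse implication uses only lower semicontinuity of $J$, i.e.\ fact (i) above together with Fatou's lemma, applied through the portmanteau theorem: for each $\eps > 0$, $\p(J(Y(\cdot)) > \eps) \le \liminf_n \p(J(Y_n(\cdot)) > \eps) = 0$, so $J(Y(\cdot)) = 0$ a.s.\ and $Y(\cdot)$ is a.s.\ continuous by (a). Either organization works; establishing (b) outright yields the cleanest statement.
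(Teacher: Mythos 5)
The paper does not prove this theorem: it is stated as a quoted result, citing Ethier and Kurtz \cite[Theorem 3.10.2]{ethier1986markov}, so there is no in-paper argument to compare against.

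That said, your proof is correct and is essentially the textbook argument (Ethier and Kurtz establish the continuity of $J$ in their Proposition 3.6.5 and then deduce the theorem by the continuous mapping theorem, exactly as you propose). Your reduction to properties (a) and (b), the use of the integral over $u$ to evade the failure of lower semicontinuity of $J_u$ at jump times, and the dominated convergence step are all sound. A small point worth making explicit if you wrote this out in full: the claim that a jump of $y$ of size $\Delta$ at $s$ forces a nearby jump of $y_n$ of nearly size $\Delta$ is proved by noting that uniform convergence of cadlag functions preserves one-sided limits, so $(y_n\circ\lambda_n)(s) - (y_n\circ\lambda_n)(s-) \to y(s)-y(s-)$; and the upper bound $\limsup_n J_u(y_n) \le J_u(y)$ at a continuity time $u$ is proved by passing to a subsequence on which the offending jump times $s_n \le u$ converge, observing that the oscillation of $y$ near the limit time must then be at least the persistent jump size, and ruling out the limit time equalling $u$. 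The alternative organization in your last paragraph — splitting the two implications so that one direction needs only continuity of $J$ at continuous paths and the other only lower semicontinuity plus portmanteau — is a clean and slightly more economical way to present the same substance.
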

We now give sufficient conditions for a sequence to be $C$-relatively compact.
\begin{theorem}\label{thm:GK_tightness}
Suppose the following two conditions hold.
\begin{enumerate}[(i)]
\item 
\begin{equation}\label{eq:GK1}
\lim\limits_{L \to \infty} \limsup\limits_{n \to \infty} \p \left( \left| Y_{n}\left( 0 \right) \right| \geq L \right) = 0.
\end{equation}
\item For each $T \geq 0$ and $\eps > 0$,
\begin{equation}\label{eq:GK2}
\lim\limits_{\delta \to 0} \limsup\limits_{n \to \infty} \p \left( \sup\limits_{\substack{0 < t - s < \delta \\ 0 \leq s \leq t \leq T}} \left| Y_{n} \left( t \right) - Y_{n} \left( s \right) \right| \geq \eps \right) = 0.
\end{equation}
\end{enumerate}
Then $\left\{ Y_{n} \left( \cdot \right) \right\}_{n\geq 1}$ is $C$-relatively compact in $D\left( [0, \infty), \mathbb{R} \right)$.
\end{theorem}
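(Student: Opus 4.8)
The plan is to derive this from the two results quoted just above: Billingsley's characterization of tightness in $D([0,\infty),\mathbb{R})$ (Theorem~\ref{thm:tightness_thm_billingsley}) for the relative compactness half, and the Ethier--Kurtz criterion (Theorem~\ref{thm:weak_limit_cont}) for the a.s.\ continuity of the weak limit points. The guiding observation is that the ``global'' oscillation bound \eqref{eq:GK2} is strong enough to imply \emph{all three} of Billingsley's conditions and, at the same time, to force every subsequential limit to be continuous. Throughout it is convenient to abbreviate, for $y(\cdot)\in D([0,\infty),\mathbb{R})$,
\[
\omega_{\delta,T}\big(y(\cdot)\big) := \sup\Big\{\, \big|y(t)-y(s)\big| \;:\; 0 < t-s < \delta,\ 0 \le s \le t \le T \,\Big\},
\]
so that \eqref{eq:GK2} reads $\lim_{\delta\to0}\limsup_{n}\p\big(\omega_{\delta,T}(Y_n(\cdot))\ge\eps\big)=0$ for every $T\ge0$ and $\eps>0$.

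First I would verify Billingsley's three conditions. Condition \eqref{eq:GK1} is literally \eqref{eq:B1}. For the jump bound \eqref{eq:B2}, observe that for any $y(\cdot)\in D([0,\infty),\mathbb{R})$, any $T$, and any $\delta>0$ one has $J_T(y(\cdot))\le\omega_{\delta,T}(y(\cdot))$, since $y(t)-y(t-)=\lim_{s\uparrow t}\big(y(t)-y(s)\big)$ and such an $s$ may be taken with $0<t-s<\delta$; hence $\p\big(J_T(Y_n(\cdot))\ge\eps\big)\le\p\big(\omega_{\delta,T}(Y_n(\cdot))\ge\eps\big)$ for every $\delta$, and letting $n\to\infty$ and then $\delta\to0$ via \eqref{eq:GK2} gives $\limsup_n\p\big(J_T(Y_n(\cdot))\ge\eps\big)=0$ for every $\eps>0$, which is \eqref{eq:B2} and in fact the stronger statement $J_T(Y_n(\cdot))\Rightarrow_n0$ for each $T$. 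For the oscillation condition \eqref{eq:B3}, fix $T$, $\eps$, $\delta$, and take the decomposition of $[0,T)$ with breakpoints $t_i=2i\delta$ (the last interval being shorter): each block has width at most $2\delta$ and all but the last have width $2\delta>\delta$, so the oscillation of $y$ over a single block is at most $\omega_{2\delta,T}(y(\cdot))$, whence the infimum over admissible decompositions in \eqref{eq:B3} is $\le\omega_{2\delta,T}(y(\cdot))$; applying \eqref{eq:GK2} with $2\delta$ in place of $\delta$ then yields \eqref{eq:B3}. By Theorem~\ref{thm:tightness_thm_billingsley}, $\{Y_n(\cdot)\}_{n\ge1}$ is tight, i.e.\ relatively compact in $D([0,\infty),\mathbb{R})$.

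Next I would show that every weak limit point is a.s.\ continuous. Along any subsequence with $Y_{n_k}(\cdot)\Rightarrow Y(\cdot)$, I appeal to Theorem~\ref{thm:weak_limit_cont}: it suffices to check $J(Y_n(\cdot))\Rightarrow_n0$, with $J$ the functional \eqref{eq:jump_def}. From $J_u(Y_n(\cdot))\le\omega_{\delta,u}(Y_n(\cdot))$ and \eqref{eq:GK2} we get $J_u(Y_n(\cdot))\to0$ in probability for each fixed $u$; since $0\le\min\{J_u(Y_n(\cdot)),1\}\le1$, bounded convergence gives $\E\big[\min\{J_u(Y_n(\cdot)),1\}\big]\to0$, and then dominated convergence (the integrand is bounded by $e^{-u}$) yields
\[
\E\big[J(Y_n(\cdot))\big]=\int_0^\infty e^{-u}\,\E\big[\min\{J_u(Y_n(\cdot)),1\}\big]\,du\ \longrightarrow\ 0 .
\]
Thus $J(Y_n(\cdot))\to0$ in $L^1$, hence in distribution, so $J(Y_n(\cdot))\Rightarrow_n0$ and $Y(\cdot)$ is a.s.\ continuous. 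Combined with tightness, this is exactly the assertion that $\{Y_n(\cdot)\}_{n\ge1}$ is $C$-relatively compact in $D([0,\infty),\mathbb{R})$.

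I do not expect a serious obstacle: essentially all the content is in recognizing that \eqref{eq:GK2} is the right strengthening of Billingsley's hypotheses. The one implication requiring mild care is \eqref{eq:GK2}$\Rightarrow$\eqref{eq:B3}, because Billingsley's criterion involves an infimum over partitions --- so fluctuations concentrated near a breakpoint are tolerated --- and is therefore a priori weaker than control of the oscillation over \emph{every} short window; one must exhibit a concrete near-uniform partition that already realizes the required bound, as indicated above. The remaining steps are routine applications of the quoted theorems together with elementary convergence arguments.
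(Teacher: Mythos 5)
Your proposal is correct and follows essentially the same route as the paper: verify Billingsley's three conditions (with the same observations that $J_T \le \omega_{\delta,T}$ and that a near-uniform partition of mesh $2\delta$ controls the infimum in \eqref{eq:B3}), then invoke Theorem~\ref{thm:weak_limit_cont} by showing $J(Y_n(\cdot))\Rightarrow_n 0$. The only cosmetic difference is in that last step: you pass through $L^1$ convergence of $J(Y_n(\cdot))$ via bounded and dominated convergence, whereas the paper uses the more direct deterministic bound $J(Y_n(\cdot))\le T\,J_T(Y_n(\cdot))+e^{-T}$ and a union-type probability estimate — both are equally valid.
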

This theorem is stated in Grigorescu and Kang \cite[Section 3]{grigorescu2008steady}, but without a proof, and so for completeness we provide a short proof.
\begin{proof}
Let us check tightness first: according to Theorem \ref{thm:tightness_thm_billingsley} we need to check that \eqref{eq:B1}, \eqref{eq:B2}, and \eqref{eq:B3} hold. Our first condition is exactly \eqref{eq:B1}. From \eqref{eq:GK2} it follows that for every $T > 0$, $J_{T} \left( Y_{n} \left( \cdot \right) \right) \Rightarrow_{n} 0$, and consequently \eqref{eq:B2} holds. To show that \eqref{eq:B3} follows from \eqref{eq:GK2}, note that
\begin{equation*}
\inf\limits_{\left\{t_{i}\right\}} \max\limits_{1 \leq i \leq v} \sup\limits_{s,t \in [t_{i-1},t_{i})} \left| Y_{n} \left( t \right) - Y_{n} \left( s \right) \right| \leq \sup\limits_{\substack{0 < t - s < 2 \delta \\ 0 \leq s \leq t \leq T}} \left| Y_{n} \left( t \right) - Y_{n} \left( s \right) \right|,
\end{equation*}
where the infimum extends over the same range as in Theorem \ref{thm:tightness_thm_billingsley}.

Now let us check that any weak limit point is a.s.\ continuous. According to Theorem \ref{thm:weak_limit_cont}, we need to check that $J\left( Y_{n} \left( \cdot \right) \right) \Rightarrow_{n} 0$. From \eqref{eq:GK2} we know that for every $T > 0$, $J_{T} \left( Y_{n} \left( \cdot \right) \right) \Rightarrow_{n} 0$. For any fixed $\eps > 0$ there exists a $T$ such that $e^{-T} < \eps / 2$. From \eqref{eq:jump_def} it follows that $J\left( Y_{n} \left( \cdot \right) \right) \leq T J_{T} \left( Y_{n} \left( \cdot \right) \right) + e^{-T}$, and consequently:
\begin{equation*}
\lim\limits_{n \to \infty} \p \left( J\left( Y_{n} \left( \cdot \right) \right) > \eps \right) \leq \lim\limits_{n \to \infty} \p \left( J_{T}\left( Y_{n} \left( \cdot \right) \right) > \eps / 2T \right) = 0. \qedhere
\end{equation*}
\end{proof}

%%%%%%%%%%%%%%%%%%%%%%%%%%%%%%%%%%%%%%%%%%%%%%%%%%%%%%%%%%%%%%%%%%%%%%%%%
%%%%%%%%%%%%%%%%%%%%%%%%%%%%%%%%%%%%%%%%%%%%%%%%%%%%%%%%%%%%%%%%%%%%%%%%%

\noindent\textbf{Tightness in Skorohod spaces containing measure-valued paths}

\bigskip\noindent Now let us turn to conditions that imply tightness in the Skorohod space of time-indexed measure-valued paths $D\left( [0,\infty), \mathcal{P}_{1} \left( \mathbb{R} \right) \right)$. Our main tool is a slight modification of a theorem of Perkins \cite[Theorem II.4.1]{perkins1999dawson}. Perkins' theorem basically states that if we can check an extra condition, then $C$-relative compactness in $D\left( [0,\infty), M_{F} \left( E \right) \right)$ (where $M_F \left( E \right)$ is the space of finite measures on the metric space $E$, with the topology of weak convergence) can be checked by checking the $C$-relative compactness of appropriate integrals in $D\left( [0,\infty), \mathbb{R} \right)$.

Our modification of Perkins' theorem is the following.
\begin{theorem}\label{thm:perkins_mod}
A sequence of cadlag $\mathcal{P}_1 \left( \mathbb{R} \right)$-valued processes $\left\{ P_{n} \left( \cdot \right) \right\}_{n\geq 1}$ is $C$-relatively compact in\linebreak[4] $D\left( [0, \infty), \mathcal{P}_1 \left( \mathbb{R} \right) \right)$ if the following conditions hold:
\begin{enumerate}[(i)]
\item For every $T > 0$ there exists a $c = c \left( T \right)$ and a $\tau = \tau \left( T \right) \in \left(0,1\right)$ such that for every $\eps > 0$ there exists a $K = K_{T,\eps} \in \mathbb{R}$ such that $K \leq c / \eps^{1 - \tau}$ and
\begin{equation*}
\sup\limits_{n} \p \left( \sup\limits_{0\leq t \leq T} P_{n} \left( t, (-\infty, -K) \cup (K, \infty) \right) > \eps \right) < \eps.
\end{equation*}
\item For every $f \in C_{b}$, $\left\{ \left\langle f, P_{n} \left( \cdot \right) \right\rangle \right\}_{n \geq 1}$ is $C$-relatively compact in $D\left( [0,\infty), \mathbb{R} \right)$.
\end{enumerate}
\end{theorem}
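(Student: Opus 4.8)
The plan is in two stages. First I would invoke Perkins' theorem \cite[Theorem~II.4.1]{perkins1999dawson} to obtain $C$-relative compactness of $\{P_n(\cdot)\}$ in the \emph{larger} space $D([0,\infty),\mathcal{P}(\mathbb{R}))$, where $\mathcal{P}(\mathbb{R})$ carries the topology of weak convergence; then I would upgrade this to $C$-relative compactness in $D([0,\infty),\mathcal{P}_1(\mathbb{R}))$ by showing that along any convergent subsequence the approximants converge to the limit even in the stronger $1$-Wasserstein topology. The somewhat awkward quantitative form of hypothesis~(i) is tailored precisely to make this second stage work.

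For the first stage, note that $[-K,K]$ is compact in $\mathbb{R}$ for each finite $K$, so hypothesis~(i) immediately yields the compact containment condition in Perkins' theorem; hypothesis~(ii) supplies $C$-relative compactness of $\langle f,P_n(\cdot)\rangle$ for every $f\in C_b$ (much more than a convergence-determining family), and the total mass $\langle 1,P_n(\cdot)\rangle\equiv1$ is trivially $C$-relatively compact. Perkins' theorem then gives $C$-relative compactness of $\{P_n(\cdot)\}$ in $D([0,\infty),M_F(\mathbb{R}))$, and since each $P_n(t)$ is a probability measure all subsequential limits take values in $\mathcal{P}(\mathbb{R})$. In parallel I would extract from~(i) the crucial \emph{uniform integrability} estimate: applying~(i) with $\eps=2^{-j}$ and (without loss of generality, since enlarging the radius only shrinks the tail) taking the radius to be exactly the increasing sequence $K_j:=c\,2^{j(1-\tau)}$, one gets $\sup_n\p\bigl(\sup_{t\le T}P_n(t,\{|x|>K_j\})>2^{-j}\bigr)<2^{-j}$. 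On the event $E_{j_0}:=\bigcap_{j\ge j_0}\{\sup_{t\le T}P_n(t,\{|x|>K_j\})\le 2^{-j}\}$, whose complement has probability at most $2^{1-j_0}$ uniformly in $n$, a dyadic decomposition gives
\[
\sup_{t\le T}\int_{|x|>K_{j_0}}|x|\,P_n(t,dx)\ \le\ \sum_{j\ge j_0}K_{j+1}\,2^{-j}\ =\ c\,2^{1-\tau}\sum_{j\ge j_0}2^{-j\tau}\ \xrightarrow[\ j_0\to\infty\ ]{}\ 0,
\]
the series being summable exactly because $\tau>0$. Hence $\{P_n(t):t\le T,\ n\ge1\}$ is uniformly integrable, uniformly over $t\le T$ (in the in-probability sense above), and by weak lower semicontinuity of $\mu\mapsto\int_{|x|\ge M}|x|\,\mu(dx)$ the same bound is inherited by any subsequential limit $P(\cdot)$, so in particular $P(t)\in\mathcal{P}_1(\mathbb{R})$ for every $t$.

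For the second stage I would take an arbitrary subsequence, extract by the first stage a further subsequence $P_{n_k}(\cdot)$ converging in distribution in $D([0,\infty),\mathcal{P}(\mathbb{R}))$ to a limit $P(\cdot)$ that is a.s.\ continuous in the weak topology, and via Skorohod's representation theorem realize this convergence almost surely. Since the limit is continuous, Skorohod convergence upgrades to locally uniform convergence, i.e.\ $\sup_{t\le T}d_{\mathrm{BL}}(P_{n_k}(t),P(t))\to0$ a.s.\ for every $T$, where $d_{\mathrm{BL}}$ is the bounded-Lipschitz metric. I would then combine this with the elementary inequality (valid for $M\ge1$, obtained from the Kantorovich--Rubinstein dual formula by truncating $1$-Lipschitz test functions at height $M$)
\[
d_1(\mu,\nu)\ \le\ M\,d_{\mathrm{BL}}(\mu,\nu)+\int_{|x|\ge M}|x|\,\mu(dx)+\int_{|x|\ge M}|x|\,\nu(dx)
\]
and the uniform integrability estimate above: given $\delta>0$, first pick $M$ so that both tail terms are $<\delta/3$ with probability $>1-\delta$ uniformly in $k$ (using the estimate for $P_{n_k}$ and its limiting version for $P$), then let $k\to\infty$ so that $M\sup_{t\le T}d_{\mathrm{BL}}(P_{n_k}(t),P(t))\to0$. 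This yields $\sup_{t\le T}d_1(P_{n_k}(t),P(t))\to0$ in probability for every $T$, hence $P_{n_k}(\cdot)\to P(\cdot)$ in $D([0,\infty),\mathcal{P}_1(\mathbb{R}))$; and $P(\cdot)$ is a.s.\ continuous there, since weak continuity together with uniform integrability on compact time intervals gives $d_1$-continuity. As every subsequence thus has a further subsequence converging in $D([0,\infty),\mathcal{P}_1(\mathbb{R}))$ to an a.s.-continuous limit, $\{P_n(\cdot)\}$ is $C$-relatively compact there.

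The routine parts are the appeal to Perkins' theorem and the metric inequalities; the step I expect to be the real obstacle — and the reason hypothesis~(i) is phrased with the growth constraint $K\le c/\eps^{1-\tau}$, $\tau\in(0,1)$ — is the passage from the weak to the Wasserstein topology, where one must convert the tail-probability control of~(i) into genuine uniform integrability of first moments over all $t\in[0,T]$. A compact-containment condition with no rate would not suffice; the precise bookkeeping $\sum_j K_{j+1}2^{-j}<\infty$ is what that constraint buys.
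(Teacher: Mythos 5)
Your proposal is correct, but it takes a genuinely different route from the paper's. The paper reproves Perkins' theorem more or less from scratch with $M_F(E)$ replaced by $\mathcal{P}_1(\mathbb{R})$: it verifies directly the two conditions of the general criterion for $C$-relative compactness in $D([0,\infty),E)$ (compact containment and a modulus-of-continuity bound, i.e.\ Theorem \ref{thm:general_C-rel_comp}), using the quantitative growth bound $K\le c/\eps^{1-\tau}$ precisely at the point where compactness of a set $C^0\subset\mathcal{P}_1(\mathbb{R})$ has to be checked via $\lim_{N\to\infty}\sup_{\mu\in C^0}\int_{|x|>N}|x|\,\mu(dx)=0$, and then completing the modulus estimate via a Stone--Weierstrass approximation and a finite $d_1$-ball covering of $\overline{C^0}$. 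You instead keep Perkins' theorem as a black box: you first obtain $C$-relative compactness in the weak-topology space $D([0,\infty),\mathcal{P}(\mathbb{R}))$ from the qualitative part of (i) plus (ii), extract a.s.\ converging subsequences via Skorohod representation, and then upgrade the mode of convergence from $d_{\mathrm{BL}}$ to $d_1$ by combining the locally uniform convergence (available because the weak limit is continuous) with a uniform-integrability estimate obtained from the quantitative part of (i) by dyadic decomposition. The arithmetic using $\tau>0$ is the same summability computation as in the paper (your $\sum_j K_{j+1}2^{-j}$ versus the paper's $\sum_l K_{l+1}l^{-2/\tau}$), but it appears at a different structural point. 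Your approach is more modular — it never has to reopen Perkins' proof — at the cost of a Skorohod-representation step and a metric comparison inequality; the paper's is more self-contained within the Ethier--Kurtz framework and avoids a.s.\ representations altogether. Both correctly identify that (i) without the growth bound would yield tightness but not $\mathcal{P}_1$-compactness, and that this is exactly the obstruction to applying Perkins' theorem verbatim.
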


If one looks at the proof of Perkins' original theorem \cite[Theorem II.4.1]{perkins1999dawson}, then one can see that it holds true if one replaces $M_F \left( E \right)$ with $M_1 \left( E \right)$, the space of probability measures on $E$ with the topology of weak convergence. However, our measure-valued process lives in a different space, namely $\mu_n \left( \cdot \right) \in D\left( [0,\infty), \mathcal{P}_1 \left( \mathbb{R} \right) \right)$, where the Wasserstein space $\mathcal{P}_1 \left( \mathbb{R} \right)$ is endowed with the metric $d_1$. It turns out that we cannot simply replace $M_F \left( E \right)$ with $\mathcal{P}_1 \left( \mathbb{R} \right)$ in the theorem. Convergence in $\left( \mathcal{P}_{1} \left( \mathbb{R} \right), d_1 \right)$ is stronger than weak convergence: it is weak convergence and the convergence of the first absolute moment of the measure. It is this that causes the original proof of the theorem to break down.

The proof of Theorem \ref{thm:perkins_mod} is essentially the same as that of Perkins' theorem, but with a slight modification at one point, which requires that our condition (i) is stronger than in Perkins' original version. For completeness, we reproduce the whole proof, following the lines of Perkins \cite[Section II.4.]{perkins1999dawson}: this can be found in Section \ref{ch:Perkins}.

We use Theorem \ref{thm:perkins_mod} in order to prove tightness of the empirical measure process $\left\{ \mu_{n} \left( \cdot \right) \right\}_{n\geq 1}$ in Corollary \ref{cor:tightness_P1(R)}.

%%%%%%%%%%%%%%%%%%%%%%%%%%%%%%%%%%%%%%%%%%%%%%%%%%%%%%%%%%%%%%%%%%%%%%%%%
%%%%%%%%%%%%%%%%%%%%%%%%%%%%%%%%%%%%%%%%%%%%%%%%%%%%%%%%%%%%%%%%%%%%%%%%%

\subsubsection{A few more facts that are used}\label{ch:More_facts}

Before we move on to our results, let us finish the preliminaries with a few more things that we use later in the proofs (in particular in Section \ref{ch:LimitSolvesMF}). We rely on the continuous mapping theorem (Billingsley \cite[Section 2]{billingsley1999convergence}). We do not need the theorem in full generality, only the following:
\begin{theorem}
Suppose $h$ is a continuous function from the metric space $\left( S, d \right)$ to the metric space $\left( S', d' \right)$. If $X_{n} \Rightarrow_{n} X$ in $\left( S, d \right)$, then $h \left( X_{n} \right) \Rightarrow_{n} h\left( X \right)$ in $\left( S', d' \right)$.
\end{theorem}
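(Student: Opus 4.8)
The plan is to reduce the claim to the very definition of weak convergence by composing test functions. Recall that $X_n \Rightarrow_n X$ in $\left( S, d \right)$ means precisely that $\E\left[ \phi\left( X_n \right) \right] \to \E\left[ \phi\left( X \right) \right]$ for every bounded continuous $\phi : S \to \mathbb{R}$ (this characterization is valid in an arbitrary metric space and is the one used throughout the excerpt). To establish $h\left( X_n \right) \Rightarrow_n h\left( X \right)$ in $\left( S', d' \right)$, it therefore suffices to show that $\E\left[ \psi\left( h\left( X_n \right) \right) \right] \to \E\left[ \psi\left( h\left( X \right) \right) \right]$ for every bounded continuous $\psi : S' \to \mathbb{R}$.

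So fix such a $\psi$ and consider the composition $\psi \circ h : S \to \mathbb{R}$. Since $h$ is continuous from $\left( S, d \right)$ to $\left( S', d' \right)$ by hypothesis, and $\psi$ is continuous on $S'$, the composition $\psi \circ h$ is continuous on $S$; it is also bounded, because $\psi$ is bounded and boundedness is preserved under precomposition with any map. Hence $\psi \circ h$ is an admissible test function for weak convergence in $\left( S, d \right)$, and applying $X_n \Rightarrow_n X$ to it gives
\begin{equation*}
\E\left[ \psi\left( h\left( X_n \right) \right) \right] = \E\left[ \left( \psi \circ h \right)\left( X_n \right) \right] \xrightarrow[n \to \infty]{} \E\left[ \left( \psi \circ h \right)\left( X \right) \right] = \E\left[ \psi\left( h\left( X \right) \right) \right].
\end{equation*}
Since $\psi$ was an arbitrary bounded continuous function on $S'$, this is exactly the statement that $h\left( X_n \right) \Rightarrow_n h\left( X \right)$ in $\left( S', d' \right)$, completing the proof.

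There is essentially no obstacle here: the only point worth flagging is that we invoke the bounded-continuous-test-function characterization of weak convergence, which holds in general metric spaces (in particular in the Skorohod spaces and in $\left( \mathcal{P}_1\left( \mathbb{R} \right), d_1 \right)$ to which we apply it later), so no separability or compactness hypotheses are needed for this elementary version. The more delicate form of the continuous mapping theorem --- in which $h$ need only be continuous off a set of $X$-measure zero --- is not required for our purposes and hence is not proved.
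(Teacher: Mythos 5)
Your proof is correct and is the standard textbook argument: compose any bounded continuous test function $\psi$ on $S'$ with the continuous map $h$ to obtain a bounded continuous test function $\psi \circ h$ on $S$, and then read off the conclusion from the definition of weak convergence. The paper does not actually prove this statement; it simply cites Billingsley \cite[Section 2]{billingsley1999convergence} and records the special case it needs, so there is no competing argument in the paper to compare against --- but your closing remark is apt: Billingsley's general version allows $h$ to be continuous only off a set of $\mathbb{P}_X$-measure zero and is proved via the Portmanteau characterization with closed sets, whereas your elementary composition argument suffices because here $h$ is assumed continuous everywhere, and this is all the paper ever uses.
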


Finally, we need the following two lemmas, which can be found as exercises in Ethier and Kurtz \cite[Chapter 3]{ethier1986markov}.
\begin{lemma}\label{lem:EKex1}
Let $E$ and $F$ be metric spaces, and let $f:\ E \to F$ be continuous. Then the mapping $x \to f \circ x$ from $D \left( [0, \infty), E \right)$ to $D \left( [0, \infty), F \right)$ is continuous.
\end{lemma}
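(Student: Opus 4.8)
The plan is to combine the time--change description of convergence in the Skorohod topology with the fact that a cadlag path sweeps out a relatively compact set on each compact time interval.

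First I would check that the map is well defined, i.e.\ that $f \circ x \in D\left( [0,\infty), F \right)$ whenever $x \in D\left( [0,\infty), E \right)$: for $t \geq 0$ we have $x(t+h) \to x(t)$ and $x(t-h) \to x(t-)$ as $h \downarrow 0$, and since $f$ is continuous these give $f(x(t+h)) \to f(x(t))$ and $f(x(t-h)) \to f(x(t-))$, so $f\circ x$ is right--continuous with left limits. To prove continuity of $x \mapsto f\circ x$, let $x_n \to x$ in $D\left( [0,\infty), E \right)$; by the subsequence criterion for convergence in a metric space it suffices to show that every subsequence of $(f\circ x_n)$ has a further subsequence converging to $f\circ x$.

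The key observation is this. For any cadlag $x \colon [0,\infty) \to E$ and any $T>0$, the set $x([0,T])$ is relatively compact in $E$: given $t_k \in [0,T]$ pass to a monotone convergent subsequence $t_k \to t^{*}$; if $t_k \downarrow t^{*}$ then $x(t_k) \to x(t^{*})$, while if $t_k \uparrow t^{*}$ then $x(t_k) \to x(t^{*}-)$, so in either case $(x(t_k))_k$ has a convergent subsequence. Let $C_T := \overline{x([0,T])}$, which is then compact. A standard contradiction argument using sequential compactness shows that $f$ is ``uniformly continuous along $C_T$'': for every $\eps>0$ there is $\delta>0$ such that the distance between $f(a)$ and $f(b)$ in $F$ is less than $\eps$ whenever $a \in C_T$, $b \in E$ and the distance between $a$ and $b$ in $E$ is less than $\delta$. (Otherwise there are $a_k \in C_T$, $b_k \in E$ with distance$(a_k,b_k)\to 0$ but distance$(f(a_k),f(b_k)) \geq \eps$; extracting $a_k \to a \in C_T$ forces $b_k \to a$, and continuity of $f$ at $a$ gives a contradiction.) Note that no uniform continuity of $f$ on all of $E$ is assumed, nor available.

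Now, along the chosen subsequence, the characterization of Skorohod convergence via time changes (Ethier and Kurtz \cite{ethier1986markov}, Chapter 3) provides strictly increasing continuous bijections $\lambda_n$ of $[0,\infty)$ onto itself with $\gamma(\lambda_n) \to 0$ (where $\gamma$ is the usual slope functional controlling $\lambda_n$) such that, for a set of $T$ that contains all continuity points of $x$ and hence is cocountable, $\sup_{0\leq t\leq T} \mathrm{dist}\left(x_n(t),\, x(\lambda_n(t))\right) \to 0$; moreover $\gamma(\lambda_n)\to 0$ forces $\lambda_n(T) \leq Te^{\gamma(\lambda_n)} \leq T+1$ for all large $n$. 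Fix such a $T$ and $\eps>0$, take $\delta$ as above for the compact set $C_{T+1}$, and observe that for all large $n$ and all $t \leq T$ we have $\mathrm{dist}(x_n(t),x(\lambda_n(t)))<\delta$ with $x(\lambda_n(t)) \in C_{T+1}$, hence $\mathrm{dist}\left(f(x_n(t)),f(x(\lambda_n(t)))\right)<\eps$. Thus $\sup_{0\leq t\leq T}\mathrm{dist}\left((f\circ x_n)(t),\,(f\circ x)(\lambda_n(t))\right)\to 0$ for a cocountable set of $T$, while $\gamma(\lambda_n)\to 0$. Plugging the \emph{same} time changes $\lambda_n$ into the Skorohod metric on $D\left([0,\infty),F\right)$ and using dominated convergence for the integral over the truncation parameter (the integrand is bounded and converges to $0$ for every $u$ that is a continuity point of $f\circ x$, of which there are cocountably many) yields $f\circ x_n \to f\circ x$ in $D\left([0,\infty),F\right)$, completing the argument.

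The main obstacle is exactly the lack of global uniform continuity of $f$: the whole proof rests on confining attention to the compact set $C_{T+1}$ that the limit path $x$ fills out over a compact time interval, which is why the relative compactness of $x([0,T])$---a property special to cadlag paths---is the crucial input, together with the (routine but slightly delicate) passage from convergence in the Skorohod metric, which a priori only controls a weighted integral of the time--changed suprema, to uniform--on--compacts control along a subsequence.
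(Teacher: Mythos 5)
The paper does not actually prove this lemma: it is quoted as an exercise from Ethier and Kurtz, Chapter~3, so there is no in-paper argument to compare against. Your proof is correct and assembles exactly the tools that exercise calls for: a cadlag path has relatively compact range over each $[0,T]$; a continuous map between metric spaces is uniformly continuous against a compact set in the stronger, one-sided sense you formulate; and the time-change characterization of Skorohod convergence lets you transport these facts from $D\left([0,\infty),E\right)$ to $D\left([0,\infty),F\right)$ using the \emph{same} sequence of time changes. Two small streamlinings are available and worth noting. First, the subsequence-of-subsequences reduction at the start is superfluous: the time-change characterization (Ethier--Kurtz, Proposition~3.5.3) already produces a single sequence $\lambda_n$ with $\gamma(\lambda_n)\to 0$ and $\sup_{0\le t\le T} r\bigl(x_n(t), x(\lambda_n(t))\bigr)\to 0$ for \emph{every} $T>0$, not merely a cocountable set, so the hedging about continuity points of $x$ can be dropped. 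Second, since that proposition is an equivalence, once you have $\gamma(\lambda_n)\to 0$ and $\sup_{0\le t\le T} r_F\bigl((f\circ x_n)(t),(f\circ x)(\lambda_n(t))\bigr)\to 0$ for all $T$, you may invoke it directly to conclude $f\circ x_n\to f\circ x$ in $D\left([0,\infty),F\right)$; the dominated-convergence estimate of the Skorohod metric is then unnecessary. Neither point affects the validity of the argument; they merely shorten it.
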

\begin{lemma}\label{lem:EKex2}
The function
\begin{equation*}
f\left( x \right) \left(\cdot \right) = \int\limits_{0}^{\cdot} x\left( s \right) ds
\end{equation*}
from $D \left( [0, \infty), \mathbb{R} \right)$ to $D \left( [0, \infty), \mathbb{R} \right)$ is continuous.
\end{lemma}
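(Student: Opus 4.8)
The plan rests on one structural observation: the map $f$ actually sends $D\left([0,\infty),\mathbb{R}\right)$ into $C\left([0,\infty),\mathbb{R}\right)$, since any cadlag path is locally bounded (hence locally integrable) and its primitive is therefore continuous. As the Skorohod topology relativized to $C\left([0,\infty),\mathbb{R}\right)$ is the topology of locally uniform convergence, and locally uniform convergence always implies Skorohod convergence, it suffices to show: whenever $x_n\to x$ in $D\left([0,\infty),\mathbb{R}\right)$ one has $\sup_{0\le t\le T}\left|f(x_n)(t)-f(x)(t)\right|\to 0$ for each $T>0$. Since $\left|f(x_n)(t)-f(x)(t)\right|\le\int_0^T\left|x_n(s)-x(s)\right|\,ds$ uniformly over $t\le T$, everything reduces to proving local $L^1$ convergence, $\int_0^T\left|x_n(s)-x(s)\right|\,ds\to 0$.

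To get this I would use the time-change characterization of Skorohod convergence. Fix $T$ and pick $T'>T$ at which $x$ is continuous (continuity points are dense); then $x_n\to x$ in $D\left([0,T'],\mathbb{R}\right)$, so there are strictly increasing continuous bijections $\lambda_n$ of $[0,T']$ with $\|\lambda_n-\mathrm{id}\|_\infty\to 0$ and $\|x_n\circ\lambda_n-x\|_\infty\to 0$. Two consequences are all I need. First, if $s$ is a continuity point of $x$ then, with $u_n=\lambda_n^{-1}(s)\to s$, one has $\left|x_n(s)-x(s)\right|=\left|(x_n\circ\lambda_n)(u_n)-x(s)\right|\le\|x_n\circ\lambda_n-x\|_\infty+\left|x(u_n)-x(s)\right|\to 0$; since $x$ has only countably many jumps, $x_n(s)\to x(s)$ for Lebesgue-a.e.\ $s\in[0,T]$. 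Second, fixing $T''\in(T,T')$, for large $n$ we have $[0,T]\subseteq[0,\lambda_n(T'')]=\lambda_n\left([0,T'']\right)$, hence $\sup_{v\le T}\left|x_n(v)\right|\le\|x_n\circ\lambda_n-x\|_\infty+\sup_{u\le T'}\left|x(u)\right|$; combined with the (finite) cadlag bounds on the first few terms this gives $\sup_n\sup_{v\le T}\left|x_n(v)\right|<\infty$.

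Now the integrands $\left|x_n(s)-x(s)\right|$ on $[0,T]$ are dominated by a fixed constant and tend to $0$ for a.e.\ $s$, so dominated convergence yields $\int_0^T\left|x_n(s)-x(s)\right|\,ds\to 0$, and therefore $\sup_{t\le T}\left|f(x_n)(t)-f(x)(t)\right|\to 0$. As $T>0$ was arbitrary, $f(x_n)\to f(x)$ locally uniformly, hence in $D\left([0,\infty),\mathbb{R}\right)$, which proves continuity of $f$.

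The only subtle point, and the one I would treat carefully, is that Skorohod convergence $x_n\to x$ does \emph{not} by itself yield pointwise (let alone uniform) convergence of $x_n$ to $x$: the two can differ by a time reparametrization that shrinks to the identity but never equals it. Passing through the explicit time changes $\lambda_n$ is exactly what recovers the two ingredients — a.e.\ pointwise convergence at continuity points of the limit, and a uniform sup bound — that allow one to commute the limit with the integral via dominated convergence. A minor bookkeeping issue, the interplay between $D\left([0,\infty),\cdot\right)$ and $D\left([0,T],\cdot\right)$ near the right endpoint, is dispatched by truncating at a continuity point $T'$ of $x$.
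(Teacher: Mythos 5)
Your proof is correct, and since the paper gives no argument of its own for this lemma (it is cited as an exercise in Ethier and Kurtz, Chapter~3), there is nothing in the paper to compare against beyond the canonical solution — which is exactly what you wrote. You reduce the claim to local $L^1$ convergence of $x_n$ to $x$, and obtain that from the time-change characterization of Skorohod convergence on $[0,T']$ with $T'$ chosen to be a continuity point of $x$: the time changes $\lambda_n$ simultaneously yield pointwise convergence at every continuity point of $x$ (hence Lebesgue-a.e.) and a uniform sup bound on $[0,T]$, so dominated convergence applies; since the primitives are continuous, uniform convergence on compacts and Skorohod convergence coincide, closing the argument. You also correctly identified the one genuinely delicate point, namely that Skorohod convergence by itself gives neither pointwise nor locally uniform convergence, and that passing through the explicit time changes is what recovers enough regularity to interchange limit and integral.
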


%%%%%%%%%%%%%%%%%%%%%%%%%%%%%%%%%%%%%%%%%%%%%%%%%%%%%%%%%%%%%%%%%%%%%%%%%
%%%%%%%%%%%%%%%%%%%%%%%%%%%%%%%%%%%%%%%%%%%%%%%%%%%%%%%%%%%%%%%%%%%%%%%%%

\subsection{Main steps of the fluid limit procedure}\label{ch:Results}

In this subsection we collect the intermediate steps that build up the proof of the main theorem, Theorem \ref{thm:main}, leaving the proofs of these results to later subsections. We complete the proof of Theorem \ref{thm:main} at the end of the subsection. Recall our assumptions on the jump rate function $w$ and the jump length $Z$ which we assume throughout this section.

Our first results deal with showing the tightness of $\left\{ \mu_{n} \left( \cdot \right) \right\}_{n \geq 1}$ in the Skorohod space $D\left( [0,\infty), \mathcal{P}_{1}\left( \mathbb{R} \right) \right)$. 
\begin{theorem}\label{thm:tightness_R}
Suppose Assumption 1 from Section \ref{sec:results} holds. Then for any $f \in C_b$ the sequence $\left\{ \left\langle f, \mu_{n}\left( \cdot \right) \right\rangle \right\}_{n \geq 1}$ is $C$-relatively compact in $D\left( [0,\infty), \mathbb{R} \right)$.
\end{theorem}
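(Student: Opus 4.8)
The plan is to apply Theorem~\ref{thm:GK_tightness} to the real-valued processes $Y_n(\cdot):=\langle f,\mu_n(\cdot)\rangle$, $n\geq1$, so that it suffices to check the two conditions (i) and (ii) of that theorem; this is the simplest of the tightness statements, the genuinely delicate one being the analogous claim for the unbounded test function $Id$.

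Condition (i) of Theorem~\ref{thm:GK_tightness} is immediate: since $f\in C_b$ we have $|Y_n(0)|=|\langle f,\mu_n(0)\rangle|\leq\|f\|_\infty$ for every $n$, so $\p(|Y_n(0)|\geq L)=0$ once $L>\|f\|_\infty$. (Assumption~1 is not in fact needed for this step; it will enter only when $Id$ is handled.) For condition (ii), fix $T\geq0$ and $\eps>0$; we must show $\lim_{\delta\to0}\limsup_n\p\big(\sup_{0<t-s<\delta,\,0\leq s\leq t\leq T}|Y_n(t)-Y_n(s)|\geq\eps\big)=0$. Two facts drive the estimate. First, $Y_n(\cdot)$ is piecewise constant and changes only at particle jumps, and a jump of particle $i$ from $x_i$ to $x_i+Z$ changes $\langle f,\mu_n\rangle$ by $\tfrac1n\big(f(x_i+Z)-f(x_i)\big)$, hence by at most $2\|f\|_\infty/n$ in absolute value. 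Second, since $w\leq a$ the total instantaneous jump rate of the $n$-particle system is at most $na$ for \emph{every} configuration, so by thinning the jump counting process can be coupled to a Poisson process of rate $na$; in particular the number $N_k$ of jumps in the block $[(k-1)\delta,k\delta]$ is dominated by an i.i.d.\ family $\widetilde N_k\sim\mathrm{Poisson}(na\delta)$, $1\leq k\leq\lceil T/\delta\rceil$. Any $s<t$ with $t-s<\delta$ meets at most two consecutive blocks, so
\begin{equation*}
\sup_{\substack{0<t-s<\delta\\ 0\leq s\leq t\leq T}}|Y_n(t)-Y_n(s)|\ \leq\ \frac{4\|f\|_\infty}{n}\,\max_{1\leq k\leq\lceil T/\delta\rceil}N_k .
\end{equation*}

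To bound the right-hand side, use the union bound and the Poisson domination:
\begin{equation*}
\p\!\left(\max_{k}N_k\geq\frac{n\eps}{4\|f\|_\infty}\right)\ \leq\ \left\lceil\frac{T}{\delta}\right\rceil\,\p\!\left(\mathrm{Poisson}(na\delta)\geq\frac{n\eps}{4\|f\|_\infty}\right).
\end{equation*}
Now fix $\delta$ small enough that $a\delta<\eps/(4\|f\|_\infty)$; then the threshold exceeds the Poisson mean by the $n$-independent factor $x:=\eps/(4a\delta\|f\|_\infty)>1$, and the standard Chernoff bound for the Poisson law gives $\p(\mathrm{Poisson}(na\delta)\geq n\eps/(4\|f\|_\infty))\leq e^{-cn}$ with $c=a\delta\,(x\ln x-x+1)>0$. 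Hence the displayed probability is at most $\lceil T/\delta\rceil e^{-cn}\to0$ as $n\to\infty$, so $\limsup_n\p(\cdots)=0$ for every such $\delta$, and therefore $\lim_{\delta\to0}\limsup_n\p(\cdots)=0$. This verifies condition (ii), and Theorem~\ref{thm:GK_tightness} yields the $C$-relative compactness of $\{\langle f,\mu_n(\cdot)\rangle\}_{n\geq1}$ in $D([0,\infty),\mathbb{R})$.

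The only step demanding real care is the uniform-in-configuration domination of the jump counting process by a single Poisson process, which is precisely where the boundedness of $w$ is used; without it (e.g.\ $w(x)=e^{-x}$) this bound fails, and controlling the increments of $\langle f,\mu_n(\cdot)\rangle$ — and a fortiori those of the center of mass $\langle Id,\mu_n(\cdot)\rangle$ — becomes substantially harder and is where Assumptions~1 and~2 do their work in the subsequent steps.
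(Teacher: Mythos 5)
Your proof is correct and follows the same high-level skeleton as the paper's (verify conditions (i) and (ii) of Theorem~\ref{thm:GK_tightness}, with (i) trivial for bounded $f$, and a block decomposition of $[0,T]$ plus a union bound for (ii)), but the way you control the modulus of continuity is genuinely different. The paper bounds $|\langle f,\mu_n(t)\rangle-\langle f,\mu_n(s)\rangle|$ by $\tfrac{2\|f\|_\infty}{n}$ times the \emph{number of particles} that have moved in $(s,t]$, then stochastically dominates each $x_i(\cdot)$ by an independent compound Poisson process $\tilde x_i(\cdot)$ with constant rate $a$, and finishes with a second-moment/Markov estimate on $\sum_i\mathbf 1[\tilde x_i(\delta)>\tilde x_i(0)]$, obtaining a bound of order $\delta$ per block. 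You instead bound the increment by $\tfrac{2\|f\|_\infty}{n}$ times the \emph{total number of jumps} in $(s,t]$ (cruder, since each particle's contribution saturates at $2\|f\|_\infty/n$ regardless of how often it jumps), dominate the system's global jump-counting process by a single Poisson process of rate $na$ via thinning (this is where $w\leq a$ enters, same as in the paper), and apply a Poisson Chernoff bound. Both arguments are sound; yours gives a much stronger per-block tail (exponential in $n$) than needed, while the paper's is more elementary. One structural difference worth noting: the paper's auxiliary system $\{\tilde x_i(\cdot)\}$ is reused in the proof of Corollary~\ref{cor:tightness_P1(R)} (to verify condition (i) of Theorem~\ref{thm:perkins_mod}), so the paper's route sets up machinery that pays off twice, whereas your single-Poisson domination is a lighter, self-contained tool for this theorem alone.
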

We remark that although we only need the above result for continuous and bounded functions in the corollary below, continuity is not used in our proof, and with a slight modification in the proof the same can be proven for Lipschitz continuous functions.
\begin{corollary}\label{cor:tightness_P1(R)}
Suppose Assumptions 1 and 2 from Section \ref{sec:results} hold. Then $\left\{ \mu_{n} \left( \cdot \right) \right\}_{n \geq 1}$ is $C$-relatively compact in $D\left( [0,\infty), \mathcal{P}_{1}\left( \mathbb{R} \right) \right)$.
\end{corollary}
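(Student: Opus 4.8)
The goal is to upgrade Theorem \ref{thm:tightness_R}, which gives $C$-relative compactness of $\left\langle f, \mu_n(\cdot) \right\rangle$ for each fixed $f \in C_b$, to $C$-relative compactness of the full measure-valued path $\left\{ \mu_n(\cdot) \right\}_{n \geq 1}$ in $D\left( [0,\infty), \mathcal{P}_1(\mathbb{R}) \right)$. The obvious tool is Theorem \ref{thm:perkins_mod}: its condition (ii) is precisely what Theorem \ref{thm:tightness_R} delivers (and $C_b \subseteq H$ up to the $|f| \le 1$ normalization, which is harmless since $C$-relative compactness is unaffected by multiplying by a constant). So the entire content of the proof is to verify condition (i) of Theorem \ref{thm:perkins_mod}, which is a tail-mass estimate: for every $T > 0$ we must produce $c = c(T)$ and $\tau = \tau(T) \in (0,1)$ so that for every $\eps > 0$ there is a cutoff $K = K_{T,\eps} \le c/\eps^{1-\tau}$ with
\begin{equation*}
\sup_n \p \left( \sup_{0 \le t \le T} \mu_n\left( t, (-\infty,-K) \cup (K,\infty) \right) > \eps \right) < \eps.
\end{equation*}

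The plan for condition (i) is to control how far particles can travel away from the origin in time $[0,T]$. The initial tail mass is handled by Assumption 2, which is exactly tailored to give, at a cutoff of order $\eps^{-2/3}$, an initial fraction of far-away particles below $\eps/4$ with probability at least $1 - \eps/3$. For the dynamics, note that each particle jumps at rate at most $a$ (the bound on $w$), and its displacement over $[0,T]$ is stochastically dominated by $\sum_{j=1}^{N} Z_j$, where $N \sim \mathrm{Poisson}(aT)$ and the $Z_j$ are i.i.d.\ copies of the jump length; crucially, the center of mass only moves forward, so a particle's position relative to the origin on the left side can only increase, while on the right side the total rightward displacement of the whole cloud is again controlled by the same Poissonian sum structure summed over particles. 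The key point: the total distance travelled by \emph{all} $n$ particles up to time $T$, namely $\sum_{i=1}^n \left( x_i(T) - x_i(0) \right)$, has expectation $\le n a T$ (it equals the number of jumps times average jump length $1$, and the jump count is Poisson-ish with mean $\le naT$ since rates are bounded by $a$), with a variance bound coming from $\E(Z^2) < \infty$. A first-moment (Markov) bound then says that the fraction of particles whose total displacement exceeds $M$ is at most $aT/M$ in expectation, uniformly in $n$; choosing $M$ of order $1/\eps$ times a constant depending on $T$ makes this fraction $\le \eps/4$ with probability $\ge 1 - \eps/3$.

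Combining the two pieces: with probability at least $1 - 2\eps/3 > 1 - \eps$, at most $\eps/4 + \eps/4 = \eps/2 < \eps$ fraction of particles lie outside $[-K', K']$ where $K' = \hat c \eps^{-2/3} + C(T)/\eps$ for a suitable constant $C(T)$; since $\eps^{-2/3} \le \eps^{-1}$ for $\eps \le 1$, we can take $K_{T,\eps} = c(T)/\eps$, i.e.\ $\tau(T) = 0$... but Theorem \ref{thm:perkins_mod} requires $\tau \in (0,1)$ strictly, so one takes $\tau(T)$ to be any value in $(0,1)$ and absorbs the small discrepancy by enlarging $c(T)$, since $\eps^{-1} \le \eps^{-(1-\tau)}\cdot \eps^{-\tau} $ — more cleanly, bound $K_{T,\eps} = c(T)\eps^{-1} \le c(T) \eps^{-(1-\tau)}$ for $\eps$ bounded away from $0$, and handle $\eps$ near $0$ by noting the probability in question is trivially small or by a crude uniform bound; alternatively pick $\tau$ close to $1$ so $\eps^{-1} \le \eps^{-(1-\tau)}$ fails — the correct fix is that we actually get a cutoff of order $\eps^{-1}$ and we simply set $\tau(T) = 0$ is not allowed, so we instead note the displacement bound can be sharpened: using $\E(Z^3) < \infty$ and a third-moment Markov bound on the per-particle displacement gives a fraction bound of order $(aT)/M^{?}$... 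In any case, the monotonicity of the center of mass plus bounded rates plus Assumption 2 give a cutoff that is at most polynomial in $1/\eps$ with exponent strictly less than $1$ after optimizing which moment one uses, and this is all condition (i) asks for. The only real subtlety — and the main obstacle — is making the displacement estimate \emph{uniform in $n$} and \emph{uniform over $t \in [0,T]$} simultaneously: the supremum over $t$ is dealt with by monotonicity of the maximal excursion (a particle that is far at some time $t \le T$ has total displacement at least that far, controlled at the endpoint $T$ by Doob/maximal inequality for the increasing process "total distance travelled"), and uniformity in $n$ is automatic because all the bounds (jump rate $\le a$, $\E Z = 1$, moment bounds, Assumption 2) are $n$-free.

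Once condition (i) and condition (ii) of Theorem \ref{thm:perkins_mod} are in hand, the Corollary follows immediately. I would structure the write-up as: (1) invoke Theorem \ref{thm:perkins_mod}; (2) condition (ii) is Theorem \ref{thm:tightness_R}; (3) prove condition (i) via the initial-condition bound (Assumption 2) plus the dynamic displacement bound (bounded rates, $\E Z = 1$, monotone center of mass, a maximal inequality), choosing $K_{T,\eps}$ of the stated order. I expect step (3), and within it the maximal inequality making the bound uniform over $[0,T]$, to be where essentially all the work lies; everything else is bookkeeping.
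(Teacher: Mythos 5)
Your overall plan is the paper's plan: invoke Theorem~\ref{thm:perkins_mod}, get condition~(ii) from Theorem~\ref{thm:tightness_R}, and reduce condition~(i) to a tail-mass estimate combining Assumption~2 (initial tails) with a displacement bound (dynamics). However, you leave the one genuinely delicate step unresolved, and you over-engineer another.

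The unresolved step is precisely the exponent $\tau$. You first derive a cutoff of order $\eps^{-1}$ (i.e.\ $\tau=0$, not allowed), then write ``$\;(aT)/M^{?}\ldots$ after optimizing which moment one uses'' without actually doing the optimization. The paper's fix is concrete and worth internalizing: a first-moment (Markov) bound on the \emph{empirical average} $\frac1n\sum_i \mathbf 1[\tilde x_i(T)-\tilde x_i(0)>K/2]$ costs a factor $4/\eps$, and then a \emph{third-moment} Markov bound on the per-particle displacement $\tilde x_1(T)-\tilde x_1(0)$ costs a factor $32\,\E\bigl((\tilde x_1(T)-\tilde x_1(0))^3\bigr)/K^3$. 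The product is $\tilde c(T)/(K^3\eps)$; requiring this $\le \eps/3$ forces $K\gtrsim \eps^{-2/3}$, which is exactly the $\tau=1/3$ claimed and exactly why Assumption~2 is phrased at the cutoff scale $\hat c\,\eps^{-2/3}$. The ``optimization'' you allude to is not open-ended: the relevant third moment is finite because $Z$ has a finite third moment and $\tilde x_i$ is a compound Poisson increment with rate $a$ and i.i.d.\ jumps distributed as $Z$, so $\tilde c(T)<\infty$ and the constants are $n$-free.

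The over-engineered step is the supremum over $t\in[0,T]$. You propose a Doob/maximal-inequality argument for the ``total distance travelled.'' This is unnecessary: particles only jump to the right, so $x_i(t)$ is nondecreasing in $t$, and hence $\mu_n(t,(-\infty,-K))$ is nonincreasing in $t$ while $\mu_n(t,(K,\infty))$ is nondecreasing in $t$. Therefore
\begin{equation*}
\sup_{0\le t\le T}\mu_n\bigl(t,(-\infty,-K)\cup(K,\infty)\bigr)\ \le\ \mu_n\bigl(0,(-\infty,-K)\bigr)+\mu_n\bigl(T,(K,\infty)\bigr),
\end{equation*}
reducing the sup over time to two fixed-time evaluations with no martingale machinery. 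Combine this with the coupling to the free system $\tilde x_i$ (all rates $a$, giving domination of increments) and the decomposition of the right tail at time $T$ into ``started past $K/2$'' plus ``moved more than $K/2$,'' and the two pieces are handled by Assumption~2 and the two-Markov-step bound above, respectively. With those two fixes your argument coincides with the paper's.
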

Theorem \ref{thm:tightness_R} and Corollary \ref{cor:tightness_P1(R)} are proved in Section \ref{ch:Tightness}.

We remark that tightness can be proved in other ways as well, using the techniques of Ethier and Kurtz \cite[Chapter 3]{ethier1986markov}. In particular, Thomas G. Kurtz communicated to us a different way of checking tightness, and we have included a sketch of his proof at the end of Section \ref{ch:Tightness}.

Our next group of results is concerned with identifying the limit of the sequence $\left\{ \mu_{n} \left( \cdot \right) \right\}_{n \geq 1}$. In particular, our goal is to show that any weak limit point of $\left\{ \mu_{n} \left( \cdot \right) \right\}_{n \geq 1}$ solves the mean field equation. We prove this in two steps. Recall the definition \eqref{eq:H_def} of $H$.
\begin{theorem}\label{thm:error_vanishes}
For every $t \geq 0$ and every $f \in H$,
\begin{equation}\label{eq:conv_to_0_inprob}
\sup\limits_{0\leq s \leq t} \left| A_{s,f} \left( \mu_{n} \left( \cdot \right) \right) \right| \xrightarrow[n\to \infty]{\mathbb{P}} 0
\end{equation}
where $\xrightarrow[n\to \infty]{\mathbb{P}} 0$ denotes convergence in probability.
\end{theorem}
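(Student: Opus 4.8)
The plan is to recognize $A_{t,f}(\mu_n(\cdot))$ as a martingale and to show that it is uniformly small on $[0,t]$ via an $L^2$ estimate. Write $X^{(n)}(s)=(x_1(s),\dots,x_n(s))$ for the underlying $\mathbb{R}^n$-valued Markov jump process; the quantity $L\langle f,\mu_n(s)\rangle$ appearing in \eqref{eq:A_def} is exactly the action of the generator of $X^{(n)}$ on the functional $X\mapsto\langle f,\frac1n\sum_i\delta_{x_i}\rangle$, so by Dynkin's formula
\[
M^{(n)}_f(s):=A_{s,f}(\mu_n(\cdot))=\langle f,\mu_n(s)\rangle-\langle f,\mu_n(0)\rangle-\int_0^s L\langle f,\mu_n(u)\rangle\,du
\]
is a martingale with $M^{(n)}_f(0)=0$, provided the relevant integrability holds. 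For $f\in C_b$ with $|f|\le 1$ this is immediate, $\langle f,\mu_n(s)\rangle$ being bounded. For $f=Id$ we have $\langle Id,\mu_n(s)\rangle=m_n(s)$, which is \emph{not} a bounded functional of the configuration --- this is the one genuinely delicate point, and it is precisely the ``center of mass is unbounded'' difficulty flagged in the introduction. I would handle it by conditioning on $\mathcal{F}_0:=\sigma(\mu_n(0))$: given $\mathcal{F}_0$ the value $m_n(0)$ is a constant, the number of jumps in $[0,t]$ is stochastically dominated by a Poisson random variable with mean $nat$ (since $w\le a$) and so has all moments, and the jump lengths are i.i.d.\ copies of $Z/n$ with finite moments; hence $m_n(s)-m_n(0)$ is integrable and $M^{(n)}_{Id}$ is a genuine (not merely local) martingale. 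The unbounded term $m_n(0)$ thus cancels and never enters the estimates.

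Next I would bound the predictable quadratic variation. A jump of the $i$-th particle changes $\langle f,\mu_n\rangle$ by $\frac1n\bigl(f(x_i+Z)-f(x_i)\bigr)$ and occurs at rate $w(x_i(s)-m_n(s))\le a$; compensating the squared jumps and summing over the $n$ particles (the car\'e du champ of the functional) gives
\[
\langle M^{(n)}_f\rangle_t=\int_0^t\frac1{n^2}\sum_{i=1}^n w\bigl(x_i(s)-m_n(s)\bigr)\,\E\left[\bigl(f(x_i(s)+Z)-f(x_i(s))\bigr)^2\right]ds\le\frac{C_f\,a\,t}{n},
\]
where $C_f=4$ when $f\in C_b$, $|f|\le1$ (each jump of $\langle f,\mu_n\rangle$ has size at most $2/n$) and $C_f=\E(Z^2)<\infty$ when $f=Id$. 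This bound is deterministic (in the $Id$ case it holds conditionally on $\mathcal{F}_0$, which is all we need).

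Finally I would invoke Doob's $L^2$ maximal inequality --- conditionally on $\mathcal{F}_0$ in the $Id$ case, directly otherwise --- together with the identity $\E[M^{(n)}_f(t)^2]=\E[\langle M^{(n)}_f\rangle_t]$ valid for a martingale started at $0$:
\[
\E\left[\sup_{0\le s\le t}\bigl|A_{s,f}(\mu_n(\cdot))\bigr|^2\right]\le 4\,\E\left[|M^{(n)}_f(t)|^2\right]=4\,\E\left[\langle M^{(n)}_f\rangle_t\right]\le\frac{4C_f\,a\,t}{n}\xrightarrow[n\to\infty]{}0,
\]
and then deduce \eqref{eq:conv_to_0_inprob} from Chebyshev's inequality. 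The main obstacle is exactly the $f=Id$ case: because the center of mass is neither bounded nor a priori integrable as a function of the configuration, the martingale property and the $L^2$ bound must be justified with care, and it is here that the standing hypotheses --- boundedness of $w$ and the finite moments of $Z$ --- are used.
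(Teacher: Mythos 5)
Your proof follows essentially the same route as the paper's: identify $A_{t,f}(\mu_n(\cdot))$ as a mean-zero martingale, compute its predictable quadratic variation (what you call $\langle M^{(n)}_f\rangle_t$ and the paper writes as $\int_0^t LM_n^2(s)\,ds$ — the same object), bound it by $C_f\,a\,t/n$ with $C_f=4$ for $|f|\le 1$ and $C_f=\E(Z^2)$ for $f=Id$ exactly as in the paper, and finish with Doob's $L^2$-maximal inequality and Chebyshev. The one place you go beyond the paper is in explicitly justifying the martingale property when $f=Id$ by conditioning on $\mathcal{F}_0$ and dominating the jump count by a Poisson; the paper asserts the martingale property without comment, so your added care on that integrability point is a small but genuine improvement in rigor.
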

\begin{theorem}\label{thm:weak_conv_of_eq}
If $\mu_{n} \left( \cdot \right) \Rightarrow_{n} \mu\left( \cdot \right)$ in $D\left( [0,\infty), \mathcal{P}_{1}\left( \mathbb{R} \right) \right)$, then for every $t\geq 0$ and every $f \in H$,
\begin{equation}\label{eq:conv_to_mf}
A_{t,f} \left( \mu_{n} \left( \cdot \right) \right) \Rightarrow_{n} A_{t,f} \left( \mu\left( \cdot \right) \right)
\end{equation}
in $\mathbb{R}$.
\end{theorem}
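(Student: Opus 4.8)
The plan is to exhibit $A_{t,f}$ as a functional on the path space $D\left([0,\infty),\mathcal{P}_1\left(\mathbb{R}\right)\right)$ that is continuous at every path which is continuous at time $t$, and then to invoke the mapping theorem (Billingsley \cite{billingsley1999convergence}) together with the fact that, by Corollary \ref{cor:tightness_P1(R)}, any weak limit of $\left\{\mu_n\left(\cdot\right)\right\}_{n\geq 1}$ --- in particular $\mu\left(\cdot\right)$ --- is almost surely continuous, so that the discontinuity set of $A_{t,f}$ is $\mu\left(\cdot\right)$-null. It is convenient to split $A_{t,f}$ into the two endpoint terms $\left\langle f,\mu\left(t\right)\right\rangle-\left\langle f,\mu\left(0\right)\right\rangle$ and the integral term, and to treat each separately.

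For the endpoint terms, the key point is that for every $f\in H$ the map $P\mapsto\left\langle f,P\right\rangle$ is continuous on $\left(\mathcal{P}_1\left(\mathbb{R}\right),d_1\right)$: for bounded continuous $f$ this is just weak continuity, and for $f=Id$ it is the content of part (2) of Lemma \ref{lem:was_metric} (with the bound $|x|\leq 1+|x|$). This is precisely where the metric $d_1$, rather than plain weak convergence, is needed, since $\left\langle Id,\cdot\right\rangle$ is unbounded. By Lemma \ref{lem:EKex1} the induced map $\mu\left(\cdot\right)\mapsto\left\langle f,\mu\left(\cdot\right)\right\rangle$ is then continuous from $D\left([0,\infty),\mathcal{P}_1\left(\mathbb{R}\right)\right)$ to $D\left([0,\infty),\mathbb{R}\right)$; evaluation at $0$ is continuous on $D\left([0,\infty),\mathbb{R}\right)$, and evaluation at $t$ is continuous at every path that is continuous at $t$.

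For the integral term, define for $P\in\mathcal{P}_1\left(\mathbb{R}\right)$
\[
F(P):=\left\langle\left(\E\left(f(x+Z)\right)-f(x)\right)w\left(x-\left\langle Id,P\right\rangle\right),\,P\right\rangle .
\]
I claim $F$ is bounded and continuous on $\left(\mathcal{P}_1\left(\mathbb{R}\right),d_1\right)$. Boundedness is immediate: for $f\in H$ one has $\left|\E\left(f(x+Z)\right)-f(x)\right|\leq 2$ (it equals $\E Z=1$ when $f=Id$) and $0\leq w\leq a$, so $|F|\leq 2a$. For continuity, suppose $P_n\to P$ in $d_1$; then $m_n:=\left\langle Id,P_n\right\rangle\to m:=\left\langle Id,P\right\rangle$ by Lemma \ref{lem:was_metric}, and writing $h_r(x):=\left(\E\left(f(x+Z)\right)-f(x)\right)w(x-r)$ we get
\[
\left|F(P_n)-F(P)\right|\leq\sup_x\left|h_{m_n}(x)-h_m(x)\right|+\left|\left\langle h_m,P_n\right\rangle-\left\langle h_m,P\right\rangle\right|.
\]
The first term is at most $2\|w'\|_\infty\,|m_n-m|\to 0$ because $w$ is Lipschitz; the second tends to $0$ since $h_m\in C_b$ (here $x\mapsto\E\left(f(x+Z)\right)$ is continuous by dominated convergence) and $d_1$-convergence implies weak convergence. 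Consequently $\mu\left(\cdot\right)\mapsto F(\mu\left(\cdot\right))$ is continuous into $D\left([0,\infty),\mathbb{R}\right)$ by Lemma \ref{lem:EKex1}; then $\mu\left(\cdot\right)\mapsto\int_0^{\cdot}F(\mu(s))\,ds$ is continuous by Lemma \ref{lem:EKex2}, and since this last path is always continuous (indeed Lipschitz, as $F$ is bounded), composing with evaluation at $t$ shows that $\mu\left(\cdot\right)\mapsto\int_0^t F(\mu(s))\,ds$ is continuous on all of $D\left([0,\infty),\mathcal{P}_1\left(\mathbb{R}\right)\right)$.

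Combining the three pieces, $A_{t,f}$ is continuous at every path that is continuous at $t$, hence in particular on $C\left([0,\infty),\mathcal{P}_1\left(\mathbb{R}\right)\right)$. Since $\mu\left(\cdot\right)$ is concentrated on continuous paths (Corollary \ref{cor:tightness_P1(R)}), the mapping theorem yields $A_{t,f}(\mu_n\left(\cdot\right))\Rightarrow_n A_{t,f}(\mu\left(\cdot\right))$ in $\mathbb{R}$, as desired. I expect the only genuine obstacle to be the continuity of $F$: the integrand depends on the measure both directly --- handled by weak convergence --- and through the unbounded functional $m\left(\cdot\right)=\left\langle Id,\cdot\right\rangle$, and it is exactly to control this second dependence that one uses the $1$-Wasserstein metric together with the boundedness and Lipschitz property of $w$. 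The failure of time-evaluation to be continuous on Skorohod space is the other mild subtlety, circumvented by the $C$-relative compactness established earlier.
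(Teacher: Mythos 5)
Your proposal is correct and follows essentially the same route as the paper's own proof: both decompose $A_{t,f}$ into the endpoint terms and the integral term, use Lemma \ref{lem:was_metric} (continuity of $\nu\mapsto\langle f,\nu\rangle$ and $\nu\mapsto\langle Id,\nu\rangle$ in $d_1$), the triangle inequality with the Lipschitz bound on $w$ to handle the $w(x-m_n)$ vs.\ $w(x-m)$ dependence, Lemmas \ref{lem:EKex1} and \ref{lem:EKex2} for path-space continuity, and the $C$-relative compactness from Corollary \ref{cor:tightness_P1(R)} together with the continuous mapping theorem to deal with the non-continuity of time-evaluation on Skorohod space. Your formulation of $A_{t,f}$ as a single functional continuous at paths continuous at $t$ is a slightly cleaner packaging of the same argument, making explicit the joint convergence of the three pieces that the paper treats term by term.
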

An immediate corollary of these two theorems is the following.
\begin{corollary}\label{cor:limit_solves_mfe}
Any weak limit point $\mu\left( \cdot \right)$ of $\left\{ \mu_{n} \left( \cdot \right) \right\}_{n \geq 1}$ in the Skorohod space $D\left( [0,\infty), \mathcal{P}_{1}\left( \mathbb{R} \right) \right)$ solves the mean field equation almost surely. That is, if $\mu_{n} \left( \cdot \right) \Rightarrow_{n} \mu\left( \cdot \right)$ in $D\left( [0,\infty), \mathcal{P}_{1}\left( \mathbb{R} \right) \right)$, then for every $t\geq 0$ and every $f \in H$, $A_{t,f} \left( \mu\left( \cdot \right) \right) = 0$ almost surely.
\end{corollary}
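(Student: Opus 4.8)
The plan is to combine Theorems~\ref{thm:error_vanishes} and~\ref{thm:weak_conv_of_eq} by a routine weak-convergence argument, and then to upgrade the resulting ``for each fixed $\left(t,f\right)$, almost surely'' statement to one holding simultaneously in $t$ and $f$ on a single almost-sure event.

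First fix $t\ge 0$ and $f\in H$, and suppose $\mu_{n}\left(\cdot\right)\Rightarrow_{n}\mu\left(\cdot\right)$ in $D\left([0,\infty),\mathcal{P}_{1}\left(\mathbb{R}\right)\right)$ (which, after relabelling, also covers an arbitrary weak limit point along a subsequence). Since $\left|A_{t,f}\left(\mu_{n}\left(\cdot\right)\right)\right|\le\sup_{0\le s\le t}\left|A_{s,f}\left(\mu_{n}\left(\cdot\right)\right)\right|$, Theorem~\ref{thm:error_vanishes} gives $A_{t,f}\left(\mu_{n}\left(\cdot\right)\right)\to 0$ in probability, hence $A_{t,f}\left(\mu_{n}\left(\cdot\right)\right)\Rightarrow_{n}0$. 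On the other hand Theorem~\ref{thm:weak_conv_of_eq} gives $A_{t,f}\left(\mu_{n}\left(\cdot\right)\right)\Rightarrow_{n}A_{t,f}\left(\mu\left(\cdot\right)\right)$ in $\mathbb{R}$. By uniqueness of the weak limit, the random variable $A_{t,f}\left(\mu\left(\cdot\right)\right)$ has the law of the constant $0$, i.e.\ $A_{t,f}\left(\mu\left(\cdot\right)\right)=0$ almost surely.

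To obtain a single exceptional null set I would argue as follows. Being a $C$-relatively compact limit (Corollary~\ref{cor:tightness_P1(R)}), $\mu\left(\cdot\right)$ is almost surely continuous in $\left(\mathcal{P}_{1}\left(\mathbb{R}\right),d_{1}\right)$; since $d_{1}$-convergence entails weak convergence together with convergence of the first absolute moment (Lemma~\ref{lem:was_metric}), the maps $t\mapsto\left\langle f,\mu\left(t\right)\right\rangle$ for $f\in H$ (including $f=Id$, where this is $m\left(t\right)$) are continuous, and the integral term in $A_{t,f}$ is automatically continuous in $t$ as the integrand is bounded by $2a$; hence $t\mapsto A_{t,f}\left(\mu\left(\cdot\right)\right)$ is continuous for every fixed $f\in H$, and it suffices to know it vanishes at all rational $t$. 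For the test functions, note that $f\mapsto A_{t,f}\left(\mu\left(\cdot\right)\right)$ is linear and continuous under uniformly bounded pointwise convergence: each term is controlled by dominated convergence, using that $w\le a$ is bounded, that $\left|f\right|\le 1$ on the bounded part of $H$, and that $\mu\left(s\right)$ is a probability measure. Fixing a countable family $H_{0}\subset\left\{f\in C_{b}:\left|f\right|\le 1\right\}$ whose uniformly bounded pointwise closure is all of $\left\{f\in C_{b}:\left|f\right|\le 1\right\}$ (e.g.\ truncations to $[-1,1]$ of piecewise linear functions with rational data), and adjoining $Id$, one intersects over the countably many pairs in $\left(\mathbb{Q}\cap[0,\infty)\right)\times\left(H_{0}\cup\{Id\}\right)$ to obtain a single almost-sure event on which $A_{t,f}\left(\mu\left(\cdot\right)\right)=0$ for every $t\ge 0$ and every $f\in H$; that is, $\mu\left(\cdot\right)$ solves the mean field equation~\eqref{eq:Atf} almost surely.

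The substantive input is entirely contained in Theorems~\ref{thm:error_vanishes} and~\ref{thm:weak_conv_of_eq}; the corollary itself is bookkeeping. The one point that requires a little care is the reduction in the third paragraph, where $H$ contains the unbounded function $Id$ alongside a non-norm-separable family of bounded continuous test functions: one needs linearity and dominated-convergence continuity of $A_{t,\cdot}$ to cut $H$ down to a countable determining set, and one needs continuity of the first-moment functional $t\mapsto m\left(t\right)$, for which working in the Wasserstein topology on $\mathcal{P}_{1}\left(\mathbb{R}\right)$ rather than under mere weak convergence is exactly what makes the argument go through.
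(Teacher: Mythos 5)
Your second paragraph is exactly the paper's proof: combine Theorem~\ref{thm:error_vanishes} (which gives $A_{t,f}(\mu_n(\cdot))\Rightarrow_n 0$) with Theorem~\ref{thm:weak_conv_of_eq} (which gives $A_{t,f}(\mu_n(\cdot))\Rightarrow_n A_{t,f}(\mu(\cdot))$) and invoke uniqueness of weak limits to conclude $A_{t,f}(\mu(\cdot))=0$ almost surely, for each fixed pair $(t,f)$. So the core argument matches.

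Where you genuinely go beyond the paper is your third paragraph, which upgrades the ``for each fixed $(t,f)$, almost surely'' conclusion to a single almost-sure event on which $A_{t,f}(\mu(\cdot))=0$ holds for \emph{all} $t\ge 0$ and \emph{all} $f\in H$. The paper states both versions in the corollary (the first sentence asserts the stronger one) but its written proof only establishes the weaker one, fixing $(t,f)$ at the outset. The stronger version is in fact what is needed in the proof of Theorem~\ref{thm:main}, where Corollary~\ref{cor:uniqueness} is applied to the random limit $\mu(\cdot)$ as a bona fide solution of the mean field equation --- this requires that the whole path solve~\eqref{eq:Atf} on one almost-sure event, not merely that each marginal identity hold almost surely. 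Your reduction is the right one: $t\mapsto A_{t,f}(\mu(\cdot))$ is continuous a.s.\ because $\mu(\cdot)$ is $C$-relatively compact hence a.s.\ $d_1$-continuous (and $d_1$-continuity controls the first moment $m(t)$, so both terms in $A_{t,f}$ are continuous in $t$, with the time integral even Lipschitz since $|g_f|\,w\le 2a$), which lets you restrict to rational $t$; and for the test functions, $f\mapsto A_{t,f}(\mu(\cdot))$ is continuous under bounded pointwise convergence by dominated convergence (with $Id$ handled separately), so a countable determining subset of $\{f\in C_b: |f|\le 1\}$ plus $Id$ suffices. This is a small but real gap in the paper's exposition that your proposal closes; the paper's proof alone establishes only the marginal statement.
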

The proofs of Theorem \ref{thm:error_vanishes}, Theorem \ref{thm:weak_conv_of_eq}, and Corollary \ref{cor:limit_solves_mfe} can be found in Section \ref{ch:LimitSolvesMF}.

Our final result concerns the mean field equation: the solution with a given initial condition is unique.
\begin{theorem}\label{thm:cont_dep_init_cond}
Suppose $\mu^{1} \left( \cdot \right)$ and $\mu^{2} \left( \cdot \right)$ are solutions to the mean field equation with initial conditions $\mu_{0}^{1}$ and $\mu_{0}^{2}$ respectively. Denote by $d_H \left( t \right)$ the distance $d_H \left( \mu^{1} \left( t \right), \mu^{2} \left( t \right) \right)$ of the two measures at time $t$ according to the metric defined by \eqref{eq:prob_metric} with $\mathcal{H} = H$. Then there exists a constant $c$ such that
\begin{equation}\label{eq:cont_dep_init_cond}
d_H\left( t \right) \leq d_H\left( 0 \right) e^{ct}.
\end{equation}
\end{theorem}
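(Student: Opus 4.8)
The plan is a Gr\"onwall stability estimate carried out directly on the weak formulation \eqref{eq:Atf}, using crucially that the test-function class $H$ contains the identity. Write $b := \sup_x |w'(x)| < \infty$, and recall $a = \sup_x w(x)$. Fix $f \in H$ and $t \ge 0$. Subtracting the identities $A_{t,f}(\mu^1(\cdot)) = 0$ and $A_{t,f}(\mu^2(\cdot)) = 0$, and abbreviating $m^i(s) := \langle x, \mu^i(s)\rangle$ and $g^i_s(x) := \left(\E f(x+Z) - f(x)\right) w(x - m^i(s))$, gives
\begin{equation*}
\langle f, \mu^1(t)\rangle - \langle f, \mu^2(t)\rangle = \langle f, \mu^1(0)\rangle - \langle f, \mu^2(0)\rangle + \int_0^t \left( \langle g^1_s, \mu^1(s)\rangle - \langle g^2_s, \mu^2(s)\rangle \right) ds ,
\end{equation*}
and I would split the integrand as $\big(\langle g^1_s, \mu^1(s)\rangle - \langle g^1_s, \mu^2(s)\rangle\big) + \langle g^1_s - g^2_s, \mu^2(s)\rangle$, i.e.\ into a ``measure difference'' term and a ``drift difference'' term.

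For the first term: for any $f \in H$ the map $x \mapsto \E f(x + Z)$ is continuous (dominated convergence if $f \in C_b$, and trivially if $f = Id$), and $\left|\E f(x+Z) - f(x)\right|$ is bounded by $2$ (when $|f| \le 1$) or equals $\E Z = 1$ (when $f = Id$); combined with $0 \le w \le a$ and continuity of $w$, this shows $g^1_s \in C_b$ with $\|g^1_s\|_\infty \le 2a$. Hence $(2a)^{-1} g^1_s \in H$, so
\begin{equation*}
\left| \langle g^1_s, \mu^1(s)\rangle - \langle g^1_s, \mu^2(s)\rangle \right| \le 2a\, d_H(s), \qquad d_H(s) := d_H\!\left(\mu^1(s), \mu^2(s)\right).
\end{equation*}
For the second term: since $w$ has bounded derivative, $\left|w(x - m^1(s)) - w(x - m^2(s))\right| \le b\, |m^1(s) - m^2(s)|$ uniformly in $x$, and because $Id \in H$ we have $|m^1(s) - m^2(s)| = \left|\langle Id, \mu^1(s)\rangle - \langle Id, \mu^2(s)\rangle\right| \le d_H(s)$; together with $\left|\E f(x+Z) - f(x)\right| \le 2$ this gives $\|g^1_s - g^2_s\|_\infty \le 2b\, d_H(s)$, whence $\left|\langle g^1_s - g^2_s, \mu^2(s)\rangle\right| \le 2b\, d_H(s)$. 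Inserting both bounds into the difference identity and taking the supremum over $f \in H$ on the left, with $C := 2a + 2b$, yields
\begin{equation*}
d_H(t) \le d_H(0) + C \int_0^t d_H(s)\, ds .
\end{equation*}

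It then remains to apply Gr\"onwall's inequality, for which one checks that $s \mapsto d_H(s)$ is measurable and locally bounded. Measurability holds because $d_H(\cdot)$ is a supremum of the continuous functions $s \mapsto \left|\langle f, \mu^1(s)\rangle - \langle f, \mu^2(s)\rangle\right|$ (continuity of $s \mapsto \langle f, \mu^i(s)\rangle$ for $f \in H$ follows from \eqref{eq:Atf}, which in fact makes $m^i(\cdot)$ Lipschitz with constant $a$), hence $d_H(\cdot)$ is lower semicontinuous; and $d_H(s) \le \max\{2,\, |m^1(s) - m^2(s)|\}$ is bounded on compact time intervals. Gr\"onwall then gives $d_H(t) \le d_H(0)\, e^{Ct}$, which is \eqref{eq:cont_dep_init_cond} with $c = C$. (If one prefers to sidestep the measurability point, one may first replace $d_H$ by its nondecreasing envelope $u(t) := \sup_{0 \le s \le t} d_H(s)$ in the integral inequality and then apply Gr\"onwall to $u$.)

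There is no deep obstacle; the one step that genuinely matters — and that motivates the particular choice of $H$ in \eqref{eq:H_def} — is the estimate for $\langle g^1_s - g^2_s, \mu^2(s)\rangle$, i.e.\ the contribution of the difference of the two center-of-mass-dependent drifts $w(\cdot - m^1(s))$ and $w(\cdot - m^2(s))$. Controlling it requires \emph{both} the boundedness of $w'$ \emph{and} the inclusion $Id \in H$, so that the metric $d_H$ itself dominates $|m^1(s) - m^2(s)|$; this is precisely what renders the nonlinearity through the center of mass harmless and closes the Gr\"onwall loop.
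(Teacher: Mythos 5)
Your argument is correct and is essentially the same as the paper's: the same decomposition into a ``measure difference'' term and a ``drift difference'' term after subtracting the two weak formulations, the same $2a$ and $2a'$ (your $2b$) bounds using $Id\in H$ to dominate $|m^1(s)-m^2(s)|$ by $d_H(s)$, and the same Gr\"onwall closing step. The only difference is cosmetic: you unify the $f=Id$ and $|f|\le 1$ cases by normalizing $g^1_s$ into $H$, and you make the measurability/local boundedness of $s\mapsto d_H(s)$ explicit, which the paper leaves implicit.
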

\begin{corollary}\label{cor:uniqueness}
Suppose $\mu^{1} \left( \cdot \right)$ and $\mu^{2} \left( \cdot \right)$ are solutions to the mean field equation with the same initial condition. Then $\mu^{1} \left( \cdot \right) = \mu^{2} \left( \cdot \right)$, i.e.\ the solution to the mean field equation with a given initial condition is unique.
\end{corollary}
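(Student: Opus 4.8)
The plan is to obtain Corollary~\ref{cor:uniqueness} as an immediate consequence of Theorem~\ref{thm:cont_dep_init_cond}. Suppose $\mu^1(\cdot)$ and $\mu^2(\cdot)$ are two solutions of the mean field equation \eqref{eq:Atf} with the same initial condition, i.e.\ $\mu^1(0)=\mu^2(0)$. Then $d_H(0)=d_H(\mu^1(0),\mu^2(0))=0$, because $d_H$ — the distance \eqref{eq:prob_metric} with $\mathcal H=H$ — is a genuine metric on $\mathcal P_1(\mathbb R)$: it dominates the Radon metric, which already separates points, as recorded in the Preliminaries. Applying Theorem~\ref{thm:cont_dep_init_cond} now gives $0\le d_H(t)\le d_H(0)\,e^{ct}=0$ for every $t\ge 0$, so $d_H(\mu^1(t),\mu^2(t))=0$, and using once more that $d_H$ separates points we conclude $\mu^1(t)=\mu^2(t)$ for all $t\ge 0$. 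Finally, two cadlag paths in $\mathcal P_1(\mathbb R)$ that agree at every time are equal as elements of $D([0,\infty),\mathcal P_1(\mathbb R))$, hence $\mu^1(\cdot)=\mu^2(\cdot)$, which is the assertion.

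So for the corollary itself there is essentially no obstacle; all the content has been pushed into Theorem~\ref{thm:cont_dep_init_cond}, and that is where the real work lies. The natural route there is a Grönwall estimate for $t\mapsto d_H(t)$. For $f\in H$ one writes $\langle f,\mu^1(t)\rangle-\langle f,\mu^2(t)\rangle$, via \eqref{eq:Atf} for both solutions, as the time integral of the difference $L\langle f,\mu^1(s)\rangle - L\langle f,\mu^2(s)\rangle$ of the two generator terms. This difference decomposes into a part obtained by integrating the \emph{common} test function $g_s(x)=\bigl(\E f(x+Z)-f(x)\bigr)\,w(x-m^1(s))$ against $\mu^1(s)-\mu^2(s)$, and a part coming from the discrepancy $w(x-m^1(s))-w(x-m^2(s))$ of the jump rates, where $m^i(s)=\langle \mathrm{Id},\mu^i(s)\rangle$. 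The first part is controlled by $\|g_s\|$ times $d_H(s)$, using that $w$ is bounded; the second is Lipschitz in $m^1(s)-m^2(s)$ because $w$ has a bounded derivative, and $|m^1(s)-m^2(s)|\le d_H(s)$ since $\mathrm{Id}\in H$. Taking $\sup_{f\in H}$ should yield $d_H(t)\le c\int_0^t d_H(s)\,ds$, and Grönwall's inequality gives \eqref{eq:cont_dep_init_cond}.

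The delicate point in that argument — and what I expect to be the main obstacle — is the presence of the unbounded test function $\mathrm{Id}$ in $H$: it serves simultaneously to define the center of mass and as an admissible test function, so one must bound integrands of at most linear growth against the signed measure $\mu^1(s)-\mu^2(s)$ in terms of $d_H(s)$, whereas $d_H$ a priori only controls bounded test functions. Handling this cleanly will require a truncation argument together with uniform control on the first absolute moments of $\mu^1(s)$ and $\mu^2(s)$ (which the assumptions on the initial data and the dynamics supply), and care that all constants entering the final Grönwall constant $c$ depend only on the bound $a$ on $w$, the bound on $w'$, and $\E Z$.
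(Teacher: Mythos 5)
Your first paragraph is exactly the paper's proof of the corollary: $d_H(0)=0$, the Gr\"onwall bound of Theorem~\ref{thm:cont_dep_init_cond} forces $d_H(t)=0$ for all $t$, and $d_H$ is a genuine metric (it dominates the Radon metric), so $\mu^1(t)=\mu^2(t)$ for every $t$. That part is correct and essentially identical to the paper's.

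One remark on the speculation in your last paragraph, since it could send you down a wrong path if you ever try to supply the Gr\"onwall estimate: the worry about bounding linear-growth integrands against $\mu^1(s)-\mu^2(s)$ is a red herring. After applying the generator, the test function $\mathrm{Id}$ never appears with linear growth: writing $g_f(x)=\E f(x+Z)-f(x)$ one has $g_{\mathrm{Id}}(x)=\E(x+Z)-x=\E Z=1$, a constant, so the corresponding integrand is just $w(x-m(s))$, which is bounded by $a$ and continuous. The only place $\mathrm{Id}$ is genuinely used is to convert $|m^1(s)-m^2(s)|$ into $d_H(s)$ (which you already noted), and the Lipschitz bound on $w$ then controls the rate discrepancy by $a'\,d_H(s)$. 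No truncation and no extra moment control are required; the constant in \eqref{eq:cont_dep_init_cond} is simply $c=2a+2a'$.
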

Let us now prove our main result.
\begin{proof}[Proof of Theorem \ref{thm:main}]
The sequence $\left\{ \mu_{n} \left( \cdot \right) \right\}_{n \geq 1}$ is tight in $D\left( [0,\infty), \mathcal{P}_{1}\left( \mathbb{R} \right) \right)$ due to Corollary \ref{cor:tightness_P1(R)}. Corollary \ref{cor:limit_solves_mfe} shows that any limit point satisfies the mean field equation. Due to Assumption 3, any limit point satisfies the mean field equation with deterministic initial condition $\nu$. We know that the solution to the mean field equation with a given initial condition is unique (Corollary \ref{cor:uniqueness}). Consequently, due to Prohorov's theorem, $\mu_{n} \left( \cdot \right) \Rightarrow_{n} \mu \left( \cdot \right)$, since we have proved tightness and the uniqueness of any limit point.
\end{proof}

%%%%%%%%%%%%%%%%%%%%%%%%%%%%%%%%%%%%%%%%%%%%%%%%%%%%%%%%%%%%%%%%%%%%%%%%%
%%%%%%%%%%%%%%%%%%%%%%%%%%%%%%%%%%%%%%%%%%%%%%%%%%%%%%%%%%%%%%%%%%%%%%%%%

\subsection{Tightness}\label{ch:Tightness}

In this subsection we prove Theorem \ref{thm:tightness_R} and Corollary \ref{cor:tightness_P1(R)}.

Before starting the proof, let us introduce a particle system $\left\{ \tilde{x}_{i}\left( \cdot \right) \right\}_{i=1}^{n}$, related to $\left\{ x_{i}\left( \cdot \right) \right\}_{i=1}^{n}$, which we use in the proof.  Initially, the two particle systems have the same configurations: $\tilde{x}_{i} \left( 0 \right) = x_{i} \left( 0 \right)$ for each $i$. Let $\left\{ \tilde{x}_{i}\left( \cdot \right) \right\}_{i=1}^{n}$ be a particle system in which all the particles are independent from each other, all of the particles jump with rate $a$ (recall that $a = \lim_{x \to -\infty} w\left( x \right) = \sup_x w\left( x \right)$), and whenever they jump, the jump length is a random variable from the same jump length distribution of the original system, independently of everything else. In other words, let $S_{i}\left( \cdot \right)$ be independent Poisson processes with rate $a$ for each $i$ , let $\xi_{1}^{i}, \xi_{2}^{i}, \dots$ be i.i.d.\ random variables from the jump length distribution, and define $\tilde{x}_{i} \left( t \right) = x_{i} \left( 0 \right) + \sum_{k=1}^{S_{i}\left( t \right)} \xi_{k}^{i}$. Moreover, $\left\{ x_{i}\left( \cdot \right) \right\}_{i=1}^{n}$ and $\left\{ \tilde{x}_{i}\left( \cdot \right) \right\}_{i=1}^{n}$ can be constructed jointly on the same probability space in the following way. Define $\left\{ \tilde{x}_{i}\left( \cdot \right) \right\}_{i=1}^{n}$ exactly as above, and whenever a particle $\tilde{x}_{i}$ jumps, let $x_{i}$ jump the same jump length with probability $w\left( x_{i} \left( t \right) - m_{n} \left( t \right) \right) / a$; with the remaining probability let $x_{i}$ stay in place. This joint construction leads to two advantageous monotonicity properties: first, \begin{equation*}
\tilde{x}_{i} \left( t \right) \geq x_{i} \left( t \right)
\end{equation*}
for any $i$ and $t\geq 0$, and second,
\begin{equation}\label{eq:xtilde2}
\tilde{x}_{i} \left( t \right) - \tilde{x}_{i} \left( s \right) \geq x_{i} \left( t \right) - x_{i} \left( s \right)
\end{equation}
for any $i$ and $t \geq s \geq 0$.

\begin{proof}[Proof of Theorem \ref{thm:tightness_R}]
Let $f \in C_b$, and denote by $K$ the upper bound of $\left|f\right|$, i.e.\ $\left| f\left( x \right) \right| \leq K$ for all $x$. Define $I_{n} \left( \cdot \right) := \left\langle f, \mu_{n} \left( \cdot \right) \right\rangle = \frac{1}{n} \sum_{i=1}^{n} f\left( x_{i} \left( \cdot \right) \right)$. Clearly $\left| I_{n} \left( t \right) \right| \leq K$ for any $t$. In order to prove tightness in $D\left( [0, \infty), \mathbb{R} \right)$ we need to show that \eqref{eq:GK1} and \eqref{eq:GK2} hold with $Y_n$ replaced with $I_n$. Since $f$ is bounded it is immediate that \eqref{eq:GK1} holds. Now let us show that \eqref{eq:GK2} holds. Our basic observation is that
\[
\left| I_{n} \left( t \right) - I_{n} \left( s \right) \right| = \frac{1}{n} \left| \sum\limits_{i=1}^{n} \left( f\left( x_{i} \left( t \right) \right) - f\left( x_{i} \left( s \right) \right) \right) \right| \leq \frac{1}{n} \sum\limits_{i=1}^{n} \left| f\left( x_{i} \left( t \right) \right) - f\left( x_{i} \left( s \right) \right) \right| \leq \frac{2K}{n} \sum\limits_{i=1}^{n} \mathbf{1}\left[ x_{i} \left( t \right) > x_{i} \left( s \right) \right].
\]
By monotonicity of $x_i \left( \cdot \right)$ and using \eqref{eq:xtilde2} we have
\[
\begin{aligned}
\p \left( \sup\limits_{\substack{0 < t - s < \delta \\ 0 \leq s \leq t \leq T}} \left| I_{n} \left( t \right) - I_{n} \left( s \right) \right| > \eps \right) &\leq \p \left( \sup\limits_{\substack{0 < t - s < \delta \\ 0 \leq s \leq t \leq T}} \frac{2K}{n} \sum\limits_{i=1}^{n} \mathbf{1} \left[ x_{i} \left( t \right) > x_{i} \left( s \right) \right] > \eps \right)\\
&\leq \p \left( \sup\limits_{0 \leq s \leq T} \frac{2K}{n} \sum\limits_{i=1}^{n} \mathbf{1} \left[ x_{i} \left( s + 2 \delta \right) > x_{i} \left( s \right) \right] > \eps \right)\\
&\leq \p \left( \sup\limits_{0 \leq s \leq T} \frac{2K}{n} \sum\limits_{i=1}^{n} \mathbf{1} \left[ \tilde{x}_{i} \left( s + 2 \delta \right) > \tilde{x}_{i} \left( s \right) \right] > \eps \right).
\end{aligned}
\]
To abbreviate notation, define $Y_{n,\delta} \left( s \right) := \frac{2K}{n} \sum_{i=1}^{n} \mathbf{1} \left[ \tilde{x}_{i} \left( s + \delta \right) > \tilde{x}_{i} \left( s \right) \right]$. Now if $\sup_{0 \leq s \leq T} Y_{n,2\delta} \left( s \right) > \eps$, then there exists an integer $k$, $0 \leq k \leq \left\lceil T / \delta \right\rceil - 1$, such that $Y_{n,\delta} \left( k \delta \right) \geq \eps/ 4$. Suppose the contrary, that for every $k$ the above expression is less than $\eps / 4$; then since an interval of length $2\delta$ can intersect at most three neighboring intervals of length $\delta$, it follows that $\sup_{0 \leq s \leq T} Y_{n,2\delta} \left( s \right) < 3 \eps / 4$. Therefore
\[
\begin{aligned}
\p \left( \sup\limits_{0 \leq s \leq T} Y_{n,2\delta} \left( s \right) > \eps \right) &\leq \p \left( \exists k, 0 \leq k \leq \left\lceil T / \delta \right\rceil - 1\ : Y_{n,\delta} \left( k \delta \right) \geq \eps/ 4 \right)\\
&= 1 - \p \left( \forall k, 0 \leq k \leq \left\lceil T / \delta \right\rceil - 1\ : Y_{n,\delta} \left( k \delta \right) < \eps/ 4 \right)\\
&= 1 - \left( \p \left( Y_{n,\delta} \left( 0 \right) < \eps/ 4 \right) \right)^{\left\lceil T / \delta \right\rceil}\\
&= 1 - \left( 1 - \p \left( Y_{n,\delta} \left( 0 \right) \geq \epsilon / 4 \right) \right)^{\left\lceil T / \delta \right\rceil},
\end{aligned}
\]
where we used that for every $i$, $\tilde{x}_{i} \left( \cdot \right)$ has independent and stationary increments. We bound the probability in the last line using Markov's inequality
\begin{equation*}
\p \left( \sum\limits_{i=1}^{n}  \mathbf{1} \left[ \tilde{x}_{i} \left( \delta \right) > \tilde{x}_{i} \left( 0 \right) \right] > \frac{n\eps}{8K} \right) \leq \E \left( \left( \sum\limits_{i=1}^{n} \mathbf{1} \left[ \tilde{x}_{i} \left( \delta \right) > \tilde{x}_{i} \left( 0 \right) \right] \right)^{2} \right) / \frac{n^{2} \eps^{2}}{64 K^{2}},
\end{equation*}
where
\[
\begin{aligned}
\E \left( \left( \sum\limits_{i=1}^{n} \mathbf{1} \left[ \tilde{x}_{i} \left( \delta \right) > \tilde{x}_{i} \left( 0 \right) \right] \right)^{2} \right) &= n \p \left( \tilde{x}_{1} \left( \delta \right) > \tilde{x}_{1} \left( 0 \right) \right) + \left( n^{2} - n \right) \p \left( \tilde{x}_{1} \left( \delta \right) > \tilde{x}_{1} \left( 0 \right) \text{ and } \tilde{x}_{2} \left( \delta \right) > \tilde{x}_{2} \left( 0 \right) \right)\\
&= n \left( 1 - e^{-a \delta} \right) + \left( n^{2} - n \right) \left( 1 - e^{-a \delta} \right)^{2}\\
&\approx na \delta + \left( n^{2} - n \right) \left( a \delta \right)^{2},
\end{aligned}
\]
from which \eqref{eq:GK2} follows immediately.
\end{proof}

\begin{proof}[Proof of Corollary \ref{cor:tightness_P1(R)}]
In order to prove this corollary we need to check the two conditions of Theorem \ref{thm:perkins_mod}. The second condition follows immediately from Theorem \ref{thm:tightness_R}. It remains to show the first condition of Theorem \ref{thm:perkins_mod}. Due to monotonicity of the $x_{i} \left( t \right)$ in $t$, we have
\[
\begin{aligned}
\sup_{0\leq t \leq T} \mu_{n} \left( t, (-\infty, -K) \cup (K, \infty) \right) &\leq \sup_{0\leq t \leq T} \mu_{n} \left( t, (-\infty, -K) \right) + \sup_{0\leq t \leq T} \mu_{n} \left( t, (K, \infty) \right)\\
&= \mu_{n} \left( 0, (-\infty, -K) \right) + \mu_{n} \left( T, (K, \infty) \right),
\end{aligned}
\]
and consequently
\begin{multline*}
\sup\limits_{n} \p \left( \sup\limits_{0\leq t \leq T} \mu_{n} \left( t, (-\infty, -K) \cup (K, \infty) \right) > \eps \right)\\
\begin{aligned}
&\leq \sup\limits_{n} \p \left( \mu_{n} \left( 0, (-\infty, -K) \right) + \mu_{n} \left( T, (K, \infty) \right) > \eps \right)\\
&\leq \sup\limits_{n} \p \left( \mu_{n} \left( 0, (-\infty, -K) \right) > \eps / 2 \right) + \sup\limits_{n} \p \left( \mu_{n} \left( T, (K, \infty) \right) > \eps / 2 \right).
\end{aligned}
\end{multline*}
Let us look at the second term:
\begin{align*}
\mu_{n} \left( T, (K, \infty) \right) &= \frac{1}{n} \sum \limits_{i=1}^{n} \mathbf{1} \left[ x_{i} \left( T \right) > K \right]\\
&\leq \frac{1}{n} \sum \limits_{i=1}^{n} \mathbf{1} \left[ x_{i} \left( 0 \right) > K / 2 \right] +  \frac{1}{n} \sum \limits_{i=1}^{n} \mathbf{1} \left[ x_{i} \left( T \right) - x_{i} \left( 0 \right) > K / 2 \right]\\
&\leq \frac{1}{n} \sum \limits_{i=1}^{n} \mathbf{1} \left[ x_{i} \left( 0 \right) > K / 2 \right] +  \frac{1}{n} \sum \limits_{i=1}^{n} \mathbf{1} \left[ \tilde{x}_{i} \left( T \right) - \tilde{x}_{i} \left( 0 \right) > K / 2 \right],
\end{align*}
and so consequently
\[
\begin{aligned}
\sup\limits_{n} \p \left( \mu_{n} \left( T, (K, \infty) \right) > \eps / 2 \right) &\leq \sup\limits_{n} \p \left( \mu_{n} \left( 0, \left( K / 2, \infty \right) \right) > \eps / 4 \right)\\
&\quad + \sup\limits_{n} \p \left( \frac{1}{n} \sum \limits_{i=1}^{n} \mathbf{1} \left[ \tilde{x}_{i} \left( T \right) - \tilde{x}_{i} \left( 0 \right) > K / 2 \right] > \eps / 4 \right).
\end{aligned}
\]
Let us look at the second term on the right hand side. Using Markov's inequality and the fact that the $\tilde{x}_i$'s are identically distributed, we have
\[
\p \left( \frac{1}{n} \sum \limits_{i=1}^{n} \mathbf{1} \left[ \tilde{x}_{i} \left( T \right) - \tilde{x}_{i} \left( 0 \right) > K / 2 \right] > \eps / 4 \right) \leq \frac{4}{\eps} \p \left( \tilde{x}_1 \left( T \right) - \tilde{x}_1 \left( 0 \right) > K/2 \right).
\]
Now using Markov's inequality again, we get that 
\[
\frac{4}{\eps} \p \left( \tilde{x}_1 \left( T \right) - \tilde{x}_1 \left( 0 \right) > K/2 \right) = \frac{4}{\eps} \p \left( \left( \tilde{x}_1 \left( T \right) - \tilde{x}_1 \left( 0 \right) \right)^{3} > K^{3} / 8 \right) \leq \frac{32}{K^3 \eps} \E \left( \left( \tilde{x}_1 \left( T \right) - \tilde{x}_1 \left( 0 \right) \right)^{3} \right) =: \frac{\tilde{c} \left( T \right) }{K^3 \eps}.
\]
Now putting everything together we get
\[
\begin{aligned}
\sup\limits_{n} \p \left( \sup\limits_{0\leq t \leq T} \mu_{n} \left( t, (-\infty, -K) \cup (K, \infty) \right) > \eps \right) &\leq \sup\limits_{n} \p \left( \mu_{n} \left( 0, (-\infty, -K) \right) > \eps / 2 \right)\\
&\quad+ \sup\limits_{n} \p \left( \mu_{n} \left( 0, \left( K / 2, \infty \right) \right) > \eps / 4 \right) + \frac{\tilde{c} \left( T \right)}{K^3 \eps},
\end{aligned}
\]
and we need the left hand side to be at most $\eps$, for which it is enough that the right hand side is at most $\eps$.

So let $T > 0$ and $\eps > 0$ be fixed, and let us choose $c = c\left( T \right)$, $\tau = \tau \left( T \right)$, and $K = K_{T, \eps}$ as follows:
\begin{align*}
c\left( T \right) &:= \max \left\{ 3 \tilde{c} \left( T \right), 8 \hat{c}^3 \right\}\\
\tau \left( T \right) &:= 1/3\\
K_{T,\eps} &:= c\left( T \right)^{1/3} \eps^{-2/3},
\end{align*}
where $\hat{c}$ is given by Assumption 2. Then $K^{3}  \geq 3 \tilde{c}\left( T \right) \eps^{-2}$, and so $ \tilde{c} \left( T \right) / \left( K^{3} \eps \right) \leq \eps / 3$. On the other hand $K \geq 2 \hat{c} \eps^{-2/3}$, and so using Assumption 2 we have
\begin{align*}
\sup\limits_{n} \p \left( \mu_{n} \left( 0, (-\infty, -K) \right) > \eps / 2 \right) &\leq \eps / 3\\
\sup\limits_{n} \p \left( \mu_{n} \left( 0, \left( K / 2, \infty \right) \right) > \eps / 4 \right) &\leq \eps / 3.
\end{align*}
Putting all these together concludes the proof.
\end{proof}

As mentioned before, there are other ways to check tightness, and here we sketch Thomas G. Kurtz's proof (see also Kotelenez and Kurtz \cite[Appendix]{kotelenez_kurtz2010macrolim}). First note that
\[
d_1 \left( \mu, \nu \right) \leq \E \left| X - Y \right|,
\]
where $X$ and $Y$ are any random variables with distributions $\mu$ and $\nu$ respectively. The paths of our particles $\left\{ x_i \left( \cdot \right) \right\}_{i=1}^{n}$ are exchangeable, and so for any $\left\{ \mathcal{F}_{t}^{\mu_n \left( \cdot \right)} \right\}$-stopping time $\tau$ we have
\begin{equation}\label{eq:Kurtz1}
\E \left( d_1 \left( \mu_n \left( \tau \right), \mu_n \left( \tau + u \right) \right) \right) \leq \E \left( x_1 \left( \tau + u \right) - x_1 \left( \tau \right) \right).
\end{equation}
For each $T > 0$, let $S_n \left( T \right)$ be the collection of $\left\{ \mathcal{F}_{t}^{\mu_n \left( \cdot \right)} \right\}$-stopping times $\tau$ satisfying $\tau \leq T$. Then by \eqref{eq:Kurtz1} and Aldous's criterion (see e.g.\ Ethier and Kurtz \cite[Theorem 8.6]{ethier1986markov}), $\left\{ \mu_n \left( \cdot \right) \right\}_{n \geq 1}$ is tight in $D \left( [0, \infty), \mathcal{P}_1 \left( \mathbb{R} \right) \right)$ if for each $T > 0$,
\[
\lim_{u \to 0} \limsup_{n \to \infty} \sup_{\tau \in S_{n} \left( T \right)} \E \left( x_1 \left( \tau + u \right) - x_1 \left( \tau \right) \right) = 0,
\]
and also condition (a) of \cite[Theorem 7.2]{ethier1986markov} holds. In our case, for a bounded jump rate function $w$, these conditions can be checked just like before, in the previous way of showing tightness.

%%%%%%%%%%%%%%%%%%%%%%%%%%%%%%%%%%%%%%%%%%%%%%%%%%%%%%%%%%%%%%%%%%%%%%%%%
%%%%%%%%%%%%%%%%%%%%%%%%%%%%%%%%%%%%%%%%%%%%%%%%%%%%%%%%%%%%%%%%%%%%%%%%%

\subsection{Identifying the limit}\label{ch:LimitSolvesMF}

In this section we prove Theorem \ref{thm:error_vanishes}, Theorem \ref{thm:weak_conv_of_eq}, and Corollary \ref{cor:limit_solves_mfe}.

\begin{proof}[Proof of Theorem \ref{thm:error_vanishes}]
Let $t > 0$ and $f \in H$ be fixed from now on. As in the previous section, define $I_{n} \left( \cdot \right) := \left\langle f, \mu_{n} \left( \cdot \right) \right\rangle = \frac{1}{n} \sum_{i=1}^{n} f\left( x_{i} \left( \cdot \right) \right)$. Calculation shows that
\begin{equation*}
L I_{n} \left( t \right) = \left\langle \left( \E \left( f \left( x + Z \right) \right) - f\left( x \right) \right) w\left( x - m_{n}\left( t \right) \right), \mu_{n} \left( t \right) \right\rangle,
\end{equation*}
which means that $M_{n} \left( t\right):= A_{t,f} \left( \mu_{n} \left( \cdot \right) \right)$ is a martingale in $t$ with $M_{n} \left( 0 \right) = 0$. Therefore by Doob's inequality (see e.g.\ Jacod and Shiryaev \cite[p. 11]{jacod1987limit}) we have that for any $c > 0$,
\begin{equation*}
\p\left( \sup\limits_{0 \leq s \leq t} \left| M_{n} \left( s \right) \right| \geq c \right) \leq \frac{\E\left( \sup\limits_{0 \leq s \leq t}  M_{n}^{2} \left( s\right) \right)}{c^{2}} \leq \frac{4 \E \left( M_{n}^{2} \left( t\right) \right)}{c^{2}}.
\end{equation*}
So in order to show \eqref{eq:conv_to_0_inprob} we need to show that
\begin{equation}\label{eq:E_Mn2_to_0}
\lim\limits_{n \to \infty} \E \left( M_{n}^{2} \left( t \right) \right) = 0.
\end{equation}
We know that 
\begin{equation*}
N_{n} \left( t \right) := M_{n}^{2} \left( t \right) - \int\limits_{0}^{t} L M_{n}^{2} \left( s \right) ds
\end{equation*}
is a martingale with $N_{n} \left( 0 \right) = 0$, and therefore
\begin{equation}
\E\left( M_{n}^{2} \left( t \right) \right) = \E \left( \int\limits_{0}^{t} L M_{n}^{2} \left( s \right) ds \right) = \int\limits_{0}^{t} \E \left( L M_{n}^{2} \left( s \right) \right) ds.\label{eq:fubini}
\end{equation}
The order of the expectation and the integral can be switched due to the Fubini-Tonelli theorem, because $L M_{n}^{2} \left( s \right) \geq 0$ as we will see shortly. Let us calculate $L M_{n}^{2} \left( t \right)$:
\begin{align*}
L M_{n}^{2} \left( t \right) &= \lim_{dt \to 0} \frac{1}{dt} \E \left( M_{n}^{2} \left( t + dt \right) - M_{n}^{2} \left( t \right) \middle| \mathcal{F}_{t} \right)\\
&= \lim_{dt \to 0} \frac{1}{dt} \E \left( \left( M_{n} \left( t + dt \right) - M_{n} \left( t \right) \right)^{2} \middle| \mathcal{F}_{t} \right),
\end{align*}
where we used that $M_{n} \left( t \right)$ is a martingale. Since $L I_{n} \left( s \right)$ is bounded, we have
\begin{align*}
M_{n}\left( t + dt \right) - M_{n} \left( t \right) &= \left( I_{n} \left( t + dt \right) - I_{n} \left( t \right) \right) - \int\limits_{t}^{t + dt} L I_{n} \left( s \right) ds\\
&= \left( I_{n} \left( t + dt \right) - I_{n} \left( t \right) \right) + O\left( dt \right)
\end{align*}
and so consequently
\begin{align*}
L M_{n}^{2} \left( t \right) &= \lim_{dt \to 0} \frac{1}{dt} \E \left( \left( M_{n} \left( t + dt \right) - M_{n} \left( t \right) \right)^{2} \middle| \mathcal{F}_{t} \right)\\
&= \lim_{dt \to 0} \frac{1}{dt} \E \left( \left( I_{n} \left( t + dt \right) - I_{n} \left( t \right) \right)^{2} \middle| \mathcal{F}_{t} \right)\\
&= \frac{1}{n^{2}}\lim_{dt \to 0} \frac{1}{dt} \E \left( \left( \sum\limits_{i=1}^{n} \left( f\left( x_{i}\left( t + dt \right) \right) - f \left( x_{i}\left( t \right) \right) \right) \right)^{2}  \middle| \mathcal{F}_{t} \right)\\
&= \frac{1}{n^{2}} \sum\limits_{i=1}^{n} w\left( x_{i}\left( t \right) - m_{n} \left( t \right) \right) \E \left( \left( f\left( x_{i}\left( t \right) + Z \right) - f\left( x_{i}\left( t \right) \right) \right)^{2} \right).
\end{align*}
Now if $f \in C_{b}$, $\left| f \right| \leq 1$, then $L M_{n}^{2} \left( t \right) \leq 4a / n$. If $f = Id$, then $L M_{n}^{2} \left( t \right) \leq a E\left( Z^{2} \right) / n$. In both cases this shows, via \eqref{eq:fubini}, that \eqref{eq:E_Mn2_to_0} holds.
\end{proof}

\begin{proof}[Proof of Theorem \ref{thm:weak_conv_of_eq}]
Recall that $w \leq a$ and that $\left| w' \right| \leq a'$.

We first show that for every $t\geq 0$ and every $f \in H$,
\begin{equation}\label{eq:conv1}
\left\langle f, \mu_{n}\left( t \right) \right\rangle \Rightarrow_{n} \left\langle f, \mu\left( t \right) \right\rangle
\end{equation}
in $\mathbb{R}$. First of all, we know that the projection $\pi_{t}\ :\ D\left( [0,\infty), \mathcal{P}_{1}\left( \mathbb{R} \right) \right) \to \mathcal{P}_{1}\left( \mathbb{R} \right)$, given by $\pi_{t} \left( \nu \left( \cdot \right) \right) = \nu\left( t \right)$, is continuous at $\nu \left( \cdot \right)$ if and only if $\nu \left( \cdot \right)$ is continuous at $t$. Since $\mu\left( \cdot \right) \in C\left( [0,\infty), \mathcal{P}_{1}\left( \mathbb{R} \right) \right)$ a.s. (by Corollary \ref{cor:tightness_P1(R)}), the continuous mapping theorem and the fact that $\mu_{n} \left( \cdot \right) \Rightarrow_{n} \mu\left( \cdot \right)$ in $D\left( [0,\infty), \mathcal{P}_{1}\left( \mathbb{R} \right) \right)$ together imply that for every $t \geq 0$,
\begin{equation*}
\mu_{n} \left( t \right) \Rightarrow_{n} \mu \left( t \right)
\end{equation*}
in $\mathcal{P}_{1}\left( \mathbb{R} \right)$. Now for any $f \in H$, the function $h_{f}\ :\ \mathcal{P}_{1}\left( \mathbb{R} \right) \to \mathbb{R}$, given by $h_{f} \left( \nu \right) = \left\langle f, \nu \right\rangle$, is continuous. (This follows easily from the equivalence of (1) and (2) in Lemma \ref{lem:was_metric}.) Using the continuous mapping theorem, this shows \eqref{eq:conv1}. 

We introduce $g_{f} \left( x \right) := \left( \E \left( f \left( x + Z \right) \right) - f\left( x \right) \right)$ in order to abbreviate notation. What remains to be shown is that for every $t\geq 0$ and every $f \in H$,
\begin{equation}\label{eq:conv2}
\int\limits_{0}^{t} \left\langle g_{f}\left( x \right) w \left( x - m_{n} \left( s \right) \right), \mu_{n} \left( s \right) \right\rangle ds \Rightarrow_{n} \int\limits_{0}^{t} \left\langle g_{f}\left( x \right) w \left( x - m \left( s \right) \right), \mu \left( s \right) \right\rangle ds
\end{equation}
in $\mathbb{R}$. First, let us show that for any $f \in H$, the function $h_{f}\ :\ \mathcal{P}_{1}\left( \mathbb{R} \right) \to \mathbb{R}$, given by\linebreak[4] $h_{f} \left( \nu \right) = \left\langle g_{f}\left( x \right) w \left( x - M \right), \nu \right\rangle$, is continuous, where $M = \int x \nu \left( dx \right)$. So suppose $\lim_{n \to \infty} d_{1} \left( \nu_{n}, \nu \right) = 0$, and denote by $M_{n}$ the mean of $\nu_{n}$, i.e.\ $M_{n} = \int x \nu_{n} \left( dx \right)$. We need to show that in this case
\begin{equation*}
\lim\limits_{n \to \infty} \left| \left\langle g_{f}\left( x \right) w \left( x - M_{n} \right), \nu_{n} \right\rangle - \left\langle g_{f}\left( x \right) w \left( x - M \right), \nu \right\rangle \right| = 0.
\end{equation*}
We use the triangle inequality to arrive at
\[
\begin{aligned}
\left| \left\langle g_{f}\left( x \right) w \left( x - M_{n} \right), \nu_{n} \right\rangle - \left\langle g_{f}\left( x \right) w \left( x - M \right), \nu \right\rangle \right| &\leq \left| \left\langle g_{f}\left( x \right) \left[ w \left( x - M_{n} \right) - w \left( x - M \right) \right], \nu_{n} \right\rangle \right|\\
&\quad+ \left| \left\langle g_{f}\left( x \right) w \left( x - M \right), \nu_{n} \right\rangle - \left\langle g_{f}\left( x \right) w \left( x - M \right), \nu \right\rangle \right|.
\end{aligned}
\]
The first term on the right hand side can be bounded above by $2a' \left| M_{n} - M \right|$. This term goes to zero as $n \to \infty$, due to $\lim_{n \to \infty} d_{1} \left( \nu_{n}, \nu \right) = 0$ and the equivalence of (1) and (2) in Lemma \ref{lem:was_metric}. For every $f \in H$, $g_{f}\left( x \right) w \left( x - M \right)$ is a bounded and continuous function, and consequently the second term goes to zero as $n \to \infty$ as well, due to $\lim_{n \to \infty} d_{1} \left( \nu_{n}, \nu \right) = 0$ and the equivalence of (1) and (2) in Lemma \ref{lem:was_metric}. Thus $h_{f}$ is continuous for every $f \in H$.

Consequently the continuous mapping theorem and Lemma \ref{lem:EKex1} together imply that
\begin{equation*}
\left\langle g_{f}\left( x \right) w \left( x - m_{n} \left( \cdot \right) \right), \mu_{n} \left( \cdot \right) \right\rangle \Rightarrow_{n} \left\langle g_{f}\left( x \right) w \left( x - m \left( \cdot \right) \right), \mu \left( \cdot \right) \right\rangle
\end{equation*}
in $D\left( [0,\infty), \mathbb{R} \right)$. Lemma \ref{lem:EKex2} then implies that
\begin{equation*}
\int\limits_{0}^{\cdot} \left\langle g_{f}\left( x \right) w \left( x - m_{n} \left( s \right) \right), \mu_{n} \left( s \right) \right\rangle ds \Rightarrow_{n} \int\limits_{0}^{\cdot} \left\langle g_{f}\left( x \right) w \left( x - m \left( s \right) \right), \mu \left( s \right) \right\rangle ds
\end{equation*}
in $D\left( [0,\infty), \mathbb{R} \right)$. The integral function on the right hand side is continuous, which implies that the one-dimensional distributions converge weakly in $\mathbb{R}$, i.e.\ \eqref{eq:conv2} holds.
\end{proof}

\begin{proof}[Proof of Corollary \ref{cor:limit_solves_mfe}]
Fix $t \geq 0$ and $f \in H$. Theorem \ref{thm:error_vanishes} implies that
\begin{equation*}
A_{t,f} \left( \mu_{n} \left( \cdot \right) \right) \Rightarrow_{n} 0
\end{equation*}
in $\mathbb{R}$. On the other hand, Theorem \ref{thm:weak_conv_of_eq} implies that
\begin{equation*}
A_{t,f} \left( \mu_{n} \left( \cdot \right) \right) \Rightarrow_{n} A_{t,f} \left( \mu \left( \cdot \right) \right)
\end{equation*}
in $\mathbb{R}$. However, we know that a sequence cannot converge weakly to two different limits (Billingsley \cite[Section 2]{billingsley1999convergence}), and hence $A_{t,f} \left( \mu \left( \cdot \right) \right) = 0$ almost surely.
\end{proof}

%%%%%%%%%%%%%%%%%%%%%%%%%%%%%%%%%%%%%%%%%%%%%%%%%%%%%%%%%%%%%%%%%%%%%%%%%
%%%%%%%%%%%%%%%%%%%%%%%%%%%%%%%%%%%%%%%%%%%%%%%%%%%%%%%%%%%%%%%%%%%%%%%%%

\subsection{Uniqueness of the solution to the mean field equation}\label{ch:Uniqueness}

In this subsection we prove Theorem \ref{thm:cont_dep_init_cond} and Corollary \ref{cor:uniqueness}.

\begin{proof}[Proof of Theorem \ref{thm:cont_dep_init_cond}]
We have to show that there exists a constant $c$ such that \eqref{eq:cont_dep_init_cond} holds. In order to do this we prove that there exists a constant $c$ such that
\begin{equation}\label{eq:Gronwall_condition}
d_H \left( t \right) \leq d_H\left( 0 \right) + c \int\limits_{0}^{t} d_H \left( s \right) ds,
\end{equation}
from which \eqref{eq:cont_dep_init_cond} follows using Gr\"{o}nwall's lemma.

So let us now prove \eqref{eq:Gronwall_condition}. Denote by $m^{1} \left( t \right)$ and $m^{2} \left( t \right)$ the mean of $\mu^{1}\left( t \right)$ and $\mu^{2}\left( t \right)$ respectively: $m^{i} \left( t \right) = \int x \mu^{i} \left( t , dx \right)$ for $i=1,2$. We know that $\mu^{1} \left( \cdot \right)$ and $\mu^{2} \left( \cdot \right)$ are solutions to the mean field equation, with initial conditions $\mu_{0}^{1}$ and $\mu_{0}^{2}$ respectively, and so
\begin{equation*}\label{eq:mean_field_gen}
A_{t,f} \left( \mu^{i} \left( \cdot \right) \right) = 0
\end{equation*}
holds for all $f \in H$, for $i=1,2$ (recall \eqref{eq:A_def}). Using this, the definition of the metric $d_H$, and simple inequalities, we have
\begin{align*}
d_H \left( t \right) &= \sup\limits_{f \in H} \left| \left\langle f, \mu^{1} \left( t \right) \right\rangle - \left\langle f, \mu^{2} \left( t \right) \right\rangle \right|\\
&\leq \sup\limits_{f \in H} \left| \left\langle f, \mu_{0}^{1} \right\rangle - \left\langle f, \mu_{0}^{2} \right\rangle \right| \\
&\quad+ \int\limits_{0}^{t} \sup\limits_{f \in H} \left|  \left\langle \left( \E \left( f \left( x + Z \right) \right) - f\left( x \right) \right) w\left( x - m^{1} \left( s \right) \right), \mu^{1} \left( s \right) \right\rangle  \right.\\
&\quad- \left.  \left\langle \left( \E \left( f \left( x + Z \right) \right) - f\left( x \right) \right) w\left( x - m^{2} \left( s \right) \right), \mu^{2} \left( s \right) \right\rangle \right| ds.
\end{align*}
The first term on the right hand side is exactly $d_H\left(0\right)$, and so what we have to show is that the supremum inside the integral is at most a constant multiple of $d_H\left( s \right)$. In order to abbreviate notation, we introduce
\[
 g_{f} \left( x \right) := \left( \E \left( f \left( x + Z \right) \right) - f\left( x \right) \right).
\]
We know that $g_{Id} = \E\left( Z \right) = 1$ and that for every $f \in C_{b}$, $\left| f \right| \leq 1$, $g_{f}$ is continuous and $\left|g_{f} \right| \leq 2$. First we use the triangle inequality:
\begin{multline*}
\sup\limits_{f \in H} \left|  \left\langle g_{f} \left( x \right) w\left( x - m^{1} \left( s \right) \right), \mu^{1} \left( s \right) \right\rangle - \left\langle g_{f} \left( x \right) w\left( x - m^{2} \left( s \right) \right), \mu^{2} \left( s \right) \right\rangle \right|\\
\begin{aligned}
&\leq \sup\limits_{f \in H} \left|  \left\langle g_{f} \left( x \right) w\left( x - m^{1} \left( s \right) \right), \mu^{1} \left( s \right) \right\rangle - \left\langle g_{f} \left( x \right) w\left( x - m^{1} \left( s \right) \right), \mu^{2} \left( s \right) \right\rangle \right|\\
&\quad+ \sup\limits_{f \in H} \left| \left\langle g_{f} \left( x \right)  w\left( x - m^{1} \left( s \right) \right) , \mu^{2} \left( s \right) \right\rangle - \left\langle g_{f} \left( x \right) w\left( x - m^{2} \left( s \right) \right) , \mu^{2} \left( s \right) \right\rangle \right|.
\end{aligned}
\end{multline*}
We bound the two terms separately by a constant multiple of $d_H\left( s \right)$. Let us start with the first term. If $f \in C_{b}$, $\left| f \right| \leq 1$, then $g_{f} \left( x \right) w\left( x - m^{1} \left( s \right) \right)$ is continuous in $x$ and its absolute value is at most $2a$, so consequently the expression in the supremum is at most $2a d_H\left( s \right)$. If $f = Id$, then the expression in the supremum is $\left| \left\langle w\left( x - m^{1} \left( s \right) \right), \mu^{2} \left( s \right) \right\rangle - \left\langle w\left( x - m^{1} \left( s \right) \right), \mu^{1} \left( s \right) \right\rangle \right|$ which is at most $a d_H\left( s \right)$ since $w\left( x - m^{1}\left( s \right) \right)$ is continuous in $x$ and $\left| w \right| \leq a$. Thus the first term is at most $2a d_H\left( s \right)$.

Now let us look at the second term. Using the fact that $w'$ is bounded we have
\[
\left| w\left( x - m^{1} \left( s \right) \right) - w\left( x - m^{2} \left( s \right) \right)  \right| \leq a' \left| m^{1} \left( s \right) - m^{2}\left( s \right) \right| \leq a' d_H\left( s \right),
\]
and since $\left| g_{f} \right| \leq 2$ for all $f\in H$, it follows that the second term is at most $2a' d_H\left( s \right)$. Therefore this shows \eqref{eq:Gronwall_condition} with $c:= 2a + 2a'$.
\end{proof}

\begin{proof}[Proof of Corollary \ref{cor:uniqueness}]
If $d_H\left( 0 \right) = 0$ then, due to Theorem \ref{thm:cont_dep_init_cond}, $d_H\left( t \right) = 0$ for all $t\geq 0$, and consequently $\mu^{1}\left( t \right) = \mu^{2} \left( t \right)$ for all $t\geq 0$, so $\mu^{1}\left( \cdot \right) = \mu^{2}\left( \cdot \right)$.
\end{proof}

%%%%%%%%%%%%%%%%%%%%%%%%%%%%%%%%%%%%%%%%%%%%%%%%%%%%%%%%%%%%%%%%%%%%%%%%%
%%%%%%%%%%%%%%%%%%%%%%%%%%%%%%%%%%%%%%%%%%%%%%%%%%%%%%%%%%%%%%%%%%%%%%%%%

\subsection{Proof of Theorem \ref{thm:perkins_mod}}\label{ch:Perkins}

Before proving Theorem \ref{thm:perkins_mod}, let us present a result for general Polish space state spaces that we use during the proof. Suppose $E$ is a Polish space and $d$ is a complete metric on $E$. For $x\left( \cdot \right) \in D\left( [0, \infty), E \right)$, $T > 0$, and $\delta > 0$, let
\begin{equation*}
w\left( x\left( \cdot \right), \delta, T \right) := \sup\limits_{0 \leq s,t \leq T, \left|s-t\right| \leq \delta} d \left( x\left( t \right), x\left( s \right) \right).
\end{equation*}
(We note that this $w$ should not be confused with our jump rate function $w$---the distinction will be obvious from the context.) Then:
\begin{theorem}\label{thm:general_C-rel_comp}
Suppose $\left\{ X_{n} \left( \cdot \right) \right\}_{n \geq 1}$ is a sequence of cadlag $E$-valued processes, where $E$ is Polish. $\left\{ X_{n} \left( \cdot \right) \right\}_{n \geq 1}$ is $C$-relatively compact in $D\left( [0,\infty), E \right)$ if and only if the following two conditions hold:
\begin{enumerate}[(a)]
\item For every $T > 0$ and every $\eps > 0$ there exists a compact set $K^{0} = K_{T,\eps}^{0} \subset E$ such that
\begin{equation*}
\sup\limits_{n} \p \left( X_{n} \left( t \right) \notin K^{0} \text{ for some } t \leq T \right) \leq \eps.
\end{equation*}
\item For every  $T > 0$ and every $\eps > 0$ there exists a $\delta > 0$ such that
\begin{equation*}
\limsup\limits_{n \to \infty} \p \left( w\left( X_{n} \left( \cdot \right), \delta, T \right) \geq \eps \right) \leq \eps.
\end{equation*}
\end{enumerate}
\end{theorem}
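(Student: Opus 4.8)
The plan is to prove the two implications of the equivalence separately. The ``if'' direction is the one used in the sequel (in the proof of Theorem \ref{thm:perkins_mod}), so I treat it first and in more detail; the converse is standard Skorohod-space lore and I only sketch it.

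\textbf{Sufficiency.} Assume (a) and (b). The first step is to deduce ordinary relative compactness of $\left\{X_{n}\left(\cdot\right)\right\}_{n\geq1}$ in $D\left([0,\infty),E\right)$. Since $E$ is Polish, by the criterion of Ethier and Kurtz \cite[Chapter 3]{ethier1986markov} it suffices to verify the compact containment condition together with the Skorohod-modulus condition, i.e.\ that for each $T,\eta>0$ there is $\delta>0$ with $\limsup_{n}\p\left(w'\left(X_{n}\left(\cdot\right),\delta,T\right)\geq\eta\right)\leq\eta$, where $w'\left(x\left(\cdot\right),\delta,T\right)=\inf_{\{t_{i}\}}\max_{i}\sup_{s,t\in[t_{i-1},t_{i})}d\left(x\left(s\right),x\left(t\right)\right)$, the infimum over partitions with minimal block length exceeding $\delta$. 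Condition (a) is exactly compact containment. For the modulus condition I use the elementary bound $w'\left(x\left(\cdot\right),\delta,T\right)\leq w\left(x\left(\cdot\right),2\delta,T\right)$ (take a partition into blocks of length in $(\delta,2\delta]$, so that any two times in a block are within $2\delta$): applying (b) to $T$ and $\eta$ gives $\delta_{0}$ with $\limsup_{n}\p\left(w\left(X_{n}\left(\cdot\right),\delta_{0},T\right)\geq\eta\right)\leq\eta$, and then $\delta:=\delta_{0}/2$ does the job. Hence $\left\{X_{n}\left(\cdot\right)\right\}_{n\geq1}$ is relatively compact.

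The second step is to show that every weak limit point is a.s.\ continuous. By the general state-space version of the jump criterion (the $E$-valued analogue of Theorem \ref{thm:weak_limit_cont}; see \cite[Theorem 3.10.2]{ethier1986markov}), with $J\left(x\left(\cdot\right)\right)=\int_{0}^{\infty}e^{-u}\min\left\{J_{u}\left(x\left(\cdot\right)\right),1\right\}du$ and $J_{u}\left(x\left(\cdot\right)\right)=\sup_{t\leq u}d\left(x\left(t\right),x\left(t-\right)\right)$, it is enough to prove $J\left(X_{n}\left(\cdot\right)\right)\Rightarrow_{n}0$. For any cadlag path $x\left(\cdot\right)$, two times straddling a discontinuity realize its size, so $J_{T}\left(x\left(\cdot\right)\right)=\lim_{\delta\downarrow0}w\left(x\left(\cdot\right),\delta,T\right)$ and, since $w\left(x\left(\cdot\right),\cdot,T\right)$ is nondecreasing, $J_{T}\left(x\left(\cdot\right)\right)\leq w\left(x\left(\cdot\right),\delta,T\right)$ for every $\delta>0$; using also $J_{u}\leq J_{T}$ for $u\leq T$ and $\min\{\cdot,1\}\leq1$ one gets
\begin{equation*}
J\left(x\left(\cdot\right)\right)\leq w\left(x\left(\cdot\right),\delta,T\right)+e^{-T}.
\end{equation*}
Now fix $\eta>0$ and let $\eps_{0}\in\left(0,\eta\right]$ be arbitrary; choose $T$ with $e^{-T}<\eps_{0}/2$ and then, by (b), $\delta$ with $\limsup_{n}\p\left(w\left(X_{n}\left(\cdot\right),\delta,T\right)\geq\eps_{0}/2\right)\leq\eps_{0}/2$. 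The displayed bound gives $\left\{J\left(X_{n}\left(\cdot\right)\right)\geq\eta\right\}\subseteq\left\{J\left(X_{n}\left(\cdot\right)\right)\geq\eps_{0}\right\}\subseteq\left\{w\left(X_{n}\left(\cdot\right),\delta,T\right)\geq\eps_{0}/2\right\}$, whence $\limsup_{n}\p\left(J\left(X_{n}\left(\cdot\right)\right)\geq\eta\right)\leq\eps_{0}/2$; letting $\eps_{0}\downarrow0$ yields $\p\left(J\left(X_{n}\left(\cdot\right)\right)\geq\eta\right)\to0$, i.e.\ $J\left(X_{n}\left(\cdot\right)\right)\Rightarrow_{n}0$. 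Combined with relative compactness this gives $C$-relative compactness.

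\textbf{Necessity.} Assume $\left\{X_{n}\left(\cdot\right)\right\}_{n\geq1}$ is $C$-relatively compact, hence relatively compact, hence tight (Prohorov, as $D\left([0,\infty),E\right)$ is Polish). For $\eps>0$ pick a compact $\Gamma_{\eps}\subseteq D\left([0,\infty),E\right)$ with $\inf_{n}\p\left(X_{n}\left(\cdot\right)\in\Gamma_{\eps}\right)\geq1-\eps$; by the Arzel\`{a}--Ascoli-type description of compact subsets of Skorohod space (\cite[Theorem 3.6.3]{ethier1986markov}), for each $T$ the set $\left\{x\left(t\right):x\left(\cdot\right)\in\Gamma_{\eps},\,0\leq t\leq T\right\}$ has compact closure $K^{0}$ in $E$, and then $\p\left(X_{n}\left(t\right)\notin K^{0}\text{ for some }t\leq T\right)\leq\eps$, which is (a). For (b) argue by contradiction: if (b) fails at some $T_{0},\eps_{0}$, then for each $m$ there is $n_{m}>n_{m-1}$ with $\p\left(w\left(X_{n_{m}}\left(\cdot\right),1/m,T_{0}\right)\geq\eps_{0}\right)>\eps_{0}$; pass to a weakly convergent sub-subsequence $X_{n_{m_{j}}}\left(\cdot\right)\Rightarrow_{j}X\left(\cdot\right)$, where $X\left(\cdot\right)$ is a.s.\ continuous by hypothesis, and realize the convergence a.s.\ via the Skorohod representation theorem. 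Since Skorohod convergence to a continuous limit is locally uniform, $\Delta_{j}:=\sup_{t\leq T_{0}}d\left(X_{n_{m_{j}}}\left(t\right),X\left(t\right)\right)\to0$ a.s.; hence for $m_{j}\geq M$, using $w\left(x\left(\cdot\right),1/m_{j},T_{0}\right)\leq w\left(x\left(\cdot\right),1/M,T_{0}\right)$ and $w\left(X_{n_{m_{j}}}\left(\cdot\right),1/M,T_{0}\right)\leq2\Delta_{j}+w\left(X\left(\cdot\right),1/M,T_{0}\right)$,
\begin{equation*}
\eps_{0}<\p\left(w\left(X_{n_{m_{j}}}\left(\cdot\right),1/M,T_{0}\right)\geq\eps_{0}\right)\leq\p\left(\Delta_{j}\geq\eps_{0}/4\right)+\p\left(w\left(X\left(\cdot\right),1/M,T_{0}\right)\geq\eps_{0}/2\right).
\end{equation*}
Letting $j\to\infty$ kills the first term, giving $\eps_{0}\leq\p\left(w\left(X\left(\cdot\right),1/M,T_{0}\right)\geq\eps_{0}/2\right)$; letting $M\to\infty$ and using $w\left(X\left(\cdot\right),1/M,T_{0}\right)\to J_{T_{0}}\left(X\left(\cdot\right)\right)=0$ a.s.\ forces $\eps_{0}\leq0$, a contradiction. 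Thus (b) holds.

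\textbf{Main obstacle.} The delicate point is the interplay between the \emph{two-sided uniform} modulus $w$ and the continuity of limit points. In the ``if'' direction one must wedge $J$ between $w$ and $e^{-T}$ and then overcome the fact that (b) supplies only a ``$\limsup\leq\eps$'' bound rather than a vanishing one---this is handled by the auxiliary parameter $\eps_{0}\downarrow0$. In the ``only if'' direction the obstacle is transferring a modulus estimate across a weak limit, which is what forces the Skorohod representation together with the (standard) fact that Skorohod convergence to a continuous path is locally uniform, plus a diagonal extraction to retain the bad behaviour. Everything else is routine bookkeeping with Skorohod-space facts from \cite{ethier1986markov}.
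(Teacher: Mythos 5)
Your proof is correct, and the logic matches exactly what the paper intends: the paper does not reproduce a proof of this theorem but merely cites Corollary 3.7.4, Remark 3.7.3, and Theorem 3.10.2 of Ethier and Kurtz, and your argument is precisely the one those references supply---relative compactness via compact containment together with the $w'$-modulus condition (using $w'(x,\delta,T)\leq w(x,2\delta,T)$ to pass between the two-sided modulus of (b) and the Skorohod modulus), followed by $C$-ness via the jump criterion $J(X_n)\Rightarrow_n 0$. You have also written out the necessity direction, which the paper's citation covers but which is not actually used downstream in the proof of Theorem \ref{thm:perkins_mod}. The only minor imprecision is the claim that the partition realizing $w'(x,\delta,T)\leq w(x,2\delta,T)$ can be taken with all blocks of length in $(\delta,2\delta]$: the final block $[t_{v-1},T)$ may be shorter than $\delta$, but since the lower-bound constraint in the definition of $w'$ applies only to the blocks $i<v$, this is harmless and the inequality stands.
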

This theorem follows from Corollary 3.7.4, Remark 3.7.3, and Theorem 3.10.2 of Ethier and Kurtz \cite{ethier1986markov}. Note: (a) is called the \textit{compact containment condition}.

\begin{proof}[Proof of Theorem \ref{thm:perkins_mod}]
To prove that $\left\{P_{n} \left( \cdot \right) \right\}_{n \geq 1}$ is C-relatively compact in $D\left( [0, \infty), \mathcal{P}_1 \left( \mathbb{R} \right) \right)$ we need to show that conditions (a) and (b) of Theorem \ref{thm:general_C-rel_comp} follow from conditions (i) and (ii) of Theorem \ref{thm:perkins_mod}. Notice that it is enough to demonstrate this for small $\eps$'s. First, let us start with the compact containment condition.
Let $T > 0$ and $\eps \in \left( 0,\frac13 \right)$ be fixed. To this $T > 0$, there exists a $c = c \left( T \right)$ and a $\tau = \tau\left( T \right) \in \left(0, 1\right)$ given by condition (i). For every $m \geq 1$, let $\eps_m := \tilde{c} \eps / m^{2/\tau}$, where $\tilde{c}^{-1} = \sum_{m \geq 1} m^{-2/\tau}$ so that $\sum_{m \geq 1} \eps_m = \eps$. Now by condition (i), to the fixed $T$ and $\eps_{m}$ there exists a $K_{m} := K_{T, \eps_m} \in \mathbb{R}$ such that 
\begin{equation*}
K_m \leq \frac{c}{\eps_{m}^{1-\tau}} = \frac{c}{\left( \tilde{c} \eps \right)^{1-\tau}} m^{2/\tau - 2}
\end{equation*}
and
\begin{equation*}
\sup\limits_{n} \p \left( \sup\limits_{0 \leq t \leq T} P_{n} \left( t, (-\infty, -K_{m}) \cup (K_{m}, \infty) \right) > \eps_m \right) < \eps_m.
\end{equation*}
This implies that 
\begin{equation*}
\sup\limits_{n} \p \left( \sup\limits_{0 \leq t \leq T} P_{n} \left( t, (-\infty, -K_{m}) \cup (K_{m}, \infty) \right) > 1 / m^{2/\tau} \right) < \eps_m,
\end{equation*}
since $\eps_m < 1 / m^{2/\tau}$. Now given the $K_m$'s, we define the following set of probability measures:
\begin{equation*}
C^0 := \left\{ \mu \in \mathcal{P}_{1} \left( \mathbb{R} \right) : \mu \left( (-\infty, -K_{m}) \cup (K_{m}, \infty) \right) \leq 1 / m^{2/\tau} \text{ for every } m \geq 1 \right\}.
\end{equation*}
(Of course $C^0 = C_{T, \eps}^0$.) The choice of the $K_m$'s implies that
\begin{equation*}
\sup\limits_{n} \p \left( P_{n} \left( t \right) \notin C^{0} \text{ for some } t \leq T \right) \leq \eps,
\end{equation*}
since
\[
\begin{aligned}
\p \left( P_{n} \left( t \right) \notin C^{0} \text{ for some } t \leq T \right) &= \p \left( \exists m \geq 1, \exists t \geq T : P_{n} \left( t, (-\infty, -K_{m}) \cup (K_{m}, \infty) \right) > 1 / m^{2/\tau} \right)\\
&= \p \left( \exists m \geq 1 : \sup\limits_{0 \leq t \leq T} P_{n} \left( t, (-\infty, -K_{m}) \cup (K_{m}, \infty) \right) > 1 / m^{2/\tau} \right)\\
&\leq \sum\limits_{m=1}^{\infty} \p \left( \sup\limits_{0 \leq t \leq T} P_{n} \left( t, (-\infty, -K_{m}) \cup (K_{m}, \infty) \right) > 1 / m^{2/\tau} \right)\\
&\leq \sum\limits_{m=1}^{\infty} \eps_m = \eps.
\end{aligned}
\]
Consequently,
\begin{equation*}
\sup\limits_{n} \p \left( P_{n} \left( t \right) \notin K^{0} \text{ for some } t \leq T \right) \leq \eps,
\end{equation*}
for $K^0 := \overline{C^0}$. This $K^0$ will be good as the compact set needed in condition (a)---what is left to show is that $K^0$ is compact in $\mathcal{P}_{1} \left( \mathbb{R} \right)$. For this, it is enough to show that $C^0$ is relatively compact in $\mathcal{P}_{1} \left( \mathbb{R} \right)$. We know (see, for instance, Feng and Kurtz \cite[Appendix D]{feng2006large}) that $C^0 \subset \mathcal{P}_{1} \left( \mathbb{R} \right)$ is relatively compact if and only if
\begin{equation}\label{eq:rel_comp_feltetel}
\lim\limits_{N \to \infty} \sup\limits_{\mu \in C^0} \int\limits_{\left|x\right| > N} \left|x\right| \mu \left( dx \right) = 0.
\end{equation}
So let us now check \eqref{eq:rel_comp_feltetel}. Note that this is where the original form of Perkins' theorem \cite[Theorem II.4.1]{perkins1999dawson} is not enough and we have to use our modified version, Theorem \ref{thm:perkins_mod}. If $N > K_m$, then
\begin{align*}
\sup\limits_{\mu \in C^0} \int\limits_{\left|x\right| > N} \left|x\right| \mu \left( dx \right) &\leq \sup\limits_{\mu \in C^0} \sum\limits_{l = m}^{\infty} \int\limits_{[-K_{l+1}, -K_{l}) \cup ( K_{l}, K_{l+1} ] } \left|x\right| \mu\left(dx\right)\\
&\leq \sup\limits_{\mu \in C^0} \sum\limits_{l = m}^{\infty} K_{l+1} \mu\left( \left( - \infty, -K_{l}\right) \cup \left( K_{l}, \infty \right) \right)\\
&\leq \sum\limits_{l = m}^{\infty} \frac{c}{\left( \tilde{c} \eps \right)^{1-\tau}} \left( l + 1 \right)^{2/\tau - 2} \frac{1}{l^{2/\tau}}.
\end{align*}
This last expression goes to 0 as $m \to \infty$, which shows \eqref{eq:rel_comp_feltetel}, and consequently we have shown that condition (i) implies condition (a).

Now let us show that conditions (i) and (ii) imply condition (b). First we show that for every $f \in C_{b} \left( \mathcal{P}_{1} \left( \mathbb{R} \right) \right)$, $\left\{ f \circ P_{n} \left( \cdot \right) \right\}_{n \geq 1}$ is $C$-relatively compact in $D\left( [0,\infty), \mathbb{R} \right)$. By the characterization of $C$-relatively compactness we need to check conditions (a) and (b) of Theorem \ref{thm:general_C-rel_comp}. Due to the boundedness of $f$, (a) is immediate. Now let us show (b). Fix $f \in C_{b} \left( \mathcal{P}_{1} \left( \mathbb{R} \right) \right)$, $T > 0$, and $\eps > 0$. Choose $K^0 = K_{T, \eps}^{0}$ as in (a). Define
\[
A = \Bigl\{ h : \mathcal{P}_1 \left( \mathbb{R} \right) \to \mathbb{R}\,\Big| h \left( \mu \right) = \sum\limits_{i = 1}^{k} a_i \prod\limits_{j=1}^{m_i} \left\langle f_{i,j}, \mu \right\rangle, a_i \in \mathbb{R}, f_{i,j} \in C_{b}, k, m_i \in \mathbb{Z}_{+} \Bigr\} \subset C_{b} \left( \mathcal{P}_1 \left( \mathbb{R} \right) \right).
\]
Then $A$ is an algebra containing the constant functions and separating points in $\mathcal{P}_1 \left( \mathbb{R} \right)$. So by the Stone-Weierstrass theorem (see for instance Willard \cite{willard2004general}) there exists an $h \in A$ such that
\begin{equation*}
\sup\limits_{\mu \in K^0} \left| h\left( \mu \right) - f \left( \mu \right) \right| < \eps.
\end{equation*}
If $\left\{ Y_{n} \left( \cdot \right) \right\}_{n \geq 1}$ and $\left\{ Z_{n} \left( \cdot \right) \right\}_{n \geq 1}$ are $C$-relatively compact in $D\left( [0,\infty), \mathbb{R} \right)$ then so are $\left\{ a Y_{n} \left( \cdot \right) + b Z_{n} \left( \cdot \right) \right\}_{n \geq 1}$ and $\left\{ Y_{n} \left( \cdot \right) Z_{n} \left( \cdot \right) \right\}_{n \geq 1}$ for any $a,b \in \mathbb{R}$ (see the characterization of Theorem \ref{thm:general_C-rel_comp}). Therefore condition (ii) of the theorem implies that $\left\{ h \circ P_{n} \left( \cdot \right) \right\}_{n \geq 1}$ is $C$-relatively compact in $D\left( [0, \infty), \mathbb{R} \right)$. Now using the characterization of Theorem \ref{thm:general_C-rel_comp} again, there exists a $\delta > 0$ such that
\begin{equation*}
\limsup\limits_{n \to \infty} \p \left( w \left( h \circ P_{n} \left( \cdot \right), \delta, T \right) \geq \eps \right) \leq \eps.
\end{equation*}
To abbreviate notation, we introduce $P_n \left( \left[0,T\right] \right) := \left\{ P_n \left( t \right) : t \in \left[0,T\right) \right\}$. If $s,t \leq T$, then
\begin{align*}
\left| f\left( P_{n} \left( t \right) \right)  - f\left( P_{n} \left( s \right) \right) \right| &= \left| f\left( P_{n} \left( t \right) \right)  - f\left( P_{n} \left( s \right) \right) \right| \mathbf{1} \left[ P_{n} \left( \left[0,T\right] \right) \subseteq K^0 \right]\\
&+ \left| f\left( P_{n} \left( t \right) \right)  - f\left( P_{n} \left( s \right) \right) \right| \mathbf{1} \left[ P_{n} \left( \left[0,T\right] \right) \nsubseteq K^0 \right]\\
&\leq 2 \sup\limits_{\mu \in K^0} \left| h\left( \mu \right) - f\left( \mu \right) \right| + \left| h \left( P_{n} \left( t \right) \right) - h \left( P_{n} \left( s \right) \right) \right|\\
&+ 2 \left\| f \right\|_{\infty} \mathbf{1} \left[ P_{n} \left( \left[0,T\right] \right) \nsubseteq K^0 \right]
\end{align*}
and so by the definition of $w \left( f \circ P_{n} \left( \cdot \right), \delta, T \right)$ we have
\begin{equation*}
w \left( f \circ P_{n} \left( \cdot \right), \delta, T \right) \leq 2 \eps + w \left( h \circ P_{n} \left( \cdot \right), \delta, T \right) + 2 \left\| f \right\|_{\infty} \mathbf{1} \left[ P_{n} \left( \left[0,T\right] \right) \nsubseteq K^0 \right].
\end{equation*}
Therefore
\[
\begin{aligned}
\limsup\limits_{n \to \infty} \p \left( w \left( f \circ P_{n} \left( \cdot \right), \delta, T \right) \geq 3 \eps \right) &\leq \limsup\limits_{n \to \infty} \p \left( P_{n} \left( \left[0,T\right] \right) \nsubseteq K^0 \right) + \limsup\limits_{n \to \infty} \p \left( w \left( h \circ P_{n} \left( \cdot \right), \delta, T \right) \geq \eps \right)\\
&\leq \eps + \eps = 2 \eps < 3 \eps,
\end{aligned}
\]
by the choice of $K^0$ and the fact that $\left\{ h \circ P_{n} \left( \cdot \right) \right\}_{n \geq 1}$ is $C$-relatively compact in $D\left( [0, \infty), \mathbb{R} \right)$.

So let us recap what we have done up until now: we have shown that (i) implies the compact containment condition and that (i) and (ii) imply that for every $f \in C_{b} \left( \mathcal{P}_{1} \left( \mathbb{R} \right) \right)$, $\left\{ f \circ P_{n} \left( \cdot \right) \right\}_{n \geq 1}$ is $C$-relatively compact in $D\left( [0, \infty), \mathbb{R} \right)$. These imply that $\left\{ P_{n} \left( \cdot \right) \right\}_{n \geq 1}$ is relatively compact in $D\left( [0, \infty), \mathbb{R} \right)$ (see Theorem 3.9.1 of Ethier and Kurtz \cite{ethier1986markov}). What remains is to show that it is not only relatively compact, but also $C$-relatively compact.

We now show that (i) and (ii) imply (b). Let us define the metric $d := \min \left\{ d_{1}, 1 \right\}$ and let 
\begin{equation*}
w_{d} \left( P_{n} \left( \cdot \right), \delta, T \right) := \sup\limits_{0 \leq s,t \leq T, \left|s-t\right| \leq \delta} d \left( P_{n}\left( t \right), P_{n}\left( s \right) \right).
\end{equation*}
Let $T > 0$ and $\eps > 0$ be fixed, and choose $K^0$ as in (a) again. Since $K^{0}$ is compact, we can choose $\mu_i \in K^0$, $i = 1, \dots, M$, so that $K^0 \subseteq \cup_{i=1}^{M} B\left( \mu_{i}, \eps \right)$, where
\[
 B\left( \mu, \eps \right) = \left\{ \nu \in \mathcal{P}_{1} \left( \mathbb{R} \right) : d_{1} \left( \mu, \nu \right) < \eps \right\}.
\]
Since $\eps < 1$,
\[
 B\left( \mu, \eps \right) = \left\{ \nu \in \mathcal{P}_{1} \left( \mathbb{R} \right) : d \left( \mu, \nu \right) < \eps \right\}.
\]
Now let $f_{i} \left( \mu \right) := d\left( \mu_{i}, \mu \right)$. Clearly $f_{i} \in C_{b} \left( \mathcal{P}_{1} \left( \mathbb{R} \right) \right)$.  We have proved that for every $f \in C_{b} \left( \mathcal{P}_{1} \left( \mathbb{R} \right) \right)$, $\left\{ f \circ P_{n} \left( \cdot \right) \right\}_{n \geq 1}$ is $C$-relatively compact in $D\left( [0, \infty), \mathbb{R} \right)$, which implies that there exists a $\delta > 0$ such that
\begin{equation}\label{eq:utolso}
\sum\limits_{i=1}^{M} \limsup\limits_{n \to \infty} \p \left( w \left( f_{i} \circ P_{n} \left( \cdot \right), \delta, T \right) \geq \eps \right) \leq \eps.
\end{equation}
If $\mu, \nu \in K^0$, choose $\mu_j$ so that $d\left( \nu, \mu_j \right) \leq \eps$. Then
\[
d\left( \mu, \nu \right) \leq d\left( \mu, \mu_j \right) + d\left( \mu_j, \nu \right) \leq \left| d\left( \mu, \mu_j \right) - d\left( \mu_j, \nu \right) \right| + 2 d\left( \mu_j, \nu \right) \leq \max\limits_{i} \left| f_{i} \left( \mu \right) - f_{i} \left( \nu \right) \right| + 2 \eps.
\]
Now let $s,t \leq T$; the above inequality implies
\begin{align*}
d\left( P_{n} \left( t \right), P_{n} \left( s \right) \right) &= d\left( P_{n} \left( t \right), P_{n} \left( s \right) \right) \mathbf{1} \left[ P_{n} \left( \left[0,T\right] \right) \subseteq K^0 \right] + d\left( P_{n} \left( t \right), P_{n} \left( s \right) \right) \mathbf{1} \left[ P_{n} \left( \left[0,T\right] \right) \nsubseteq K^0 \right]\\
&\leq \max\limits_{i} \left| f_{i} \circ P_{n} \left( t \right) - f_{i} \circ P_{n} \left( s \right) \right| + 2 \eps + \mathbf{1} \left[ P_{n} \left( \left[0,T\right] \right) \nsubseteq K^0 \right],
\end{align*}
where we used that $d \leq 1$. Consequently
\begin{equation*}
w_{d} \left( P_{n} \left( \cdot \right), \delta, T \right) \leq \max\limits_{i} w \left( f_{i} \circ P_{n} \left( \cdot \right), \delta, T \right) + 2 \eps + \mathbf{1} \left[ P_{n} \left( \left[0,T\right] \right) \nsubseteq K^0 \right].
\end{equation*}
Now note that $\left\{ w \left( P_{n} \left( \cdot \right), \delta, T \right) \geq 3 \eps \right\}$ and $\left\{ w_d \left( P_{n} \left( \cdot \right), \delta, T \right) \geq 3 \eps \right\}$ are exactly the same events, and consequently
\begin{multline*}
\limsup\limits_{n \to \infty} \p \left( w \left( P_{n} \left( \cdot \right), \delta, T \right) \geq 3 \eps \right) = \limsup\limits_{n \to \infty} \p \left( w_d \left( P_{n} \left( \cdot \right), \delta, T \right) \geq 3 \eps \right)\\
\begin{aligned}
&\leq \limsup\limits_{n \to \infty} \p \left( \max\limits_{i} w \left( f_{i} \circ P_{n} \left( \cdot \right), \delta, T \right) \geq \eps \right) + \limsup\limits_{n \to \infty} \p \left( P_{n} \left( \left[0,T\right] \right) \nsubseteq K^0 \right)\\
&\leq \limsup\limits_{n \to \infty} \sum\limits_{i=1}^{M} \p \left(  w \left( f_{i} \circ P_{n} \left( \cdot \right), \delta, T \right) \geq \eps \right) + \limsup\limits_{n \to \infty} \p \left( P_{n} \left( \left[0,T\right] \right) \nsubseteq K^0 \right)\\
&\leq \eps + \eps = 2 \eps < 3 \eps,
\end{aligned}
\end{multline*}
using \eqref{eq:utolso} and the choice of $K^0$. This shows condition (b), and therefore completes the proof.
\end{proof}

\section{Simulation Results}\label{sec:sim}

As mentioned before, we believe that the empirical measure of the particle system for a large number of particles is well approximated by the mean field equation for a wide class of jump rate functions $w$. In this section we support this conjecture via simulation results in cases when Theorem \ref{thm:main} does not apply due to the assumptions made on the jump rate function $w$. We test the simulated particle distribution around the mean position against the mean field stationary formulas calculated in Examples \ref{ex:exp} and \ref{ex:step}. We see that in both cases the particle system moves as a traveling wave with speed very close to the one predicted by the mean field equation, and the empirical density of the particle system fluctuates around the stationary density \eqref{eq:stationary_dist}.

Suppose the stationary density derived from the mean field approximation is $\rho$. We fix the number of particles ($n = 10^{4}$ in the examples) and run the simulation for the finite number of particles for time $t \in [0,T]$. At $t = 0$ we suppose that every particle is at $0$. For every time $t$ there exists an empirical density histogram $\hat{\rho}_{n}\left(\cdot,t\right)$ of the particles around their empirical mean position, and from this we derive the time average for the empirical density histograms
\begin{equation*}
\tilde{\rho}_{n,T}\left(x\right) = \frac{1}{T} \int\limits_{0}^{T} \hat{\rho}_{n}\left(x,t\right) dt.
\end{equation*}
Since after a certain relaxation time $t_{0}$ the distribution of the particles around their mean position is very close to the stationary distribution, if we choose $T$ big enough compared to $t_{0}$ then we do not need to know the exact value of $t_{0}$, and $\tilde{\rho}_{n}$ will be very close to the stationary distribution of the $n$ particles around their mean position. In both examples considered, we show plots of the time average $\tilde{\rho}_{n,T}$, and we also give a snapshot of the empirical density histogram $\hat{\rho}_{n}\left(\cdot,t\right)$ at a given time $t$ to give the reader an idea of how this fluctuates around the stationary density.

One question remains: how do we create the empirical density histogram? We choose an interval $I = \left[a_{0},a_{1}\right]$ so that $\int_{I} \rho\left(x\right)dx \approx 1$ and divide $I$ into $N = \frac{a_{1} - a_{0}}{h_{n}}$ intervals of length $h_{n}$. In our cases the tails of the distribution are always at least exponential, so if $I$ is chosen big enough then we do not have to deal with what happens outside $I$. From this the empirical density histogram of the particles around their mean position at time $t$ is
\[
\hat{\rho}_{n}\left(x,t\right) = \frac{1}{n h_{n}} \left|\{i : x_{i}\left(t\right) - m_{n}\left(t\right) \in \Delta_{j}, i = 1, \dots, n\}\right|,\qquad \text{if } x\in \Delta_{j}, j = 1, \dots, n,
\]
where $\Delta_{j} = [a_{0} + \left(j-1\right)h_{n}, a_{0} + j h_{n})$. We know that if $x_{1}\left(t\right),\dots,x_{n}\left(t\right)$ are independent identically distributed random variables, then if $\lim_{n\to\infty} h_{n} = 0$ and  $\lim_{n\to\infty} n h_{n} = \infty$ then the empirical density histogram converges at every continuity point of the density as $n\to\infty$. However, in our case the $x_{i}\left(t\right)$ random variables are not independent. Nonetheless, we choose $N = 2 \sqrt{n}$ and therefore $h_{n} = \frac{a_{1} - a_{0}}{2 \sqrt{n}}$ so that the criteria above is satisfied.

\subsection{Jump rate function is exponential}

Let $w\left(x\right) = e^{-\beta x}$ as in Example \ref{ex:exp}, in this case the stationary mean field density is given by \eqref{eq:rhobeta1.1}. Figure \ref{fig:exp_time_avg} shows a plot of $\rho_{\beta}$ in blue and the time average of the empirical density histograms $\tilde{\rho}_{n,T,\beta}$ in red with the following parameters: $\beta = 1, n = 10^{4}, a_{0} = -10, a_{1} = 10, T = 1000$. We conclude that the two fit extremely well and note that simulations show the same for different values of the parameter $\beta$. Figure \ref{fig:exp_pillanat} shows the empirical density histogram $\hat{\rho}_{n,\beta}\left(\cdot,t\right)$ at time $t = 500$ with the other parameters the same as in Figure \ref{fig:exp_time_avg}. Finally, Figure \ref{fig:speed_exp} demonstrates that the particle system moves at a constant velocity $c$ predicted by the mean field model.
\begin{figure}[ht]
\centering
	\includegraphics[width=0.8\textwidth]{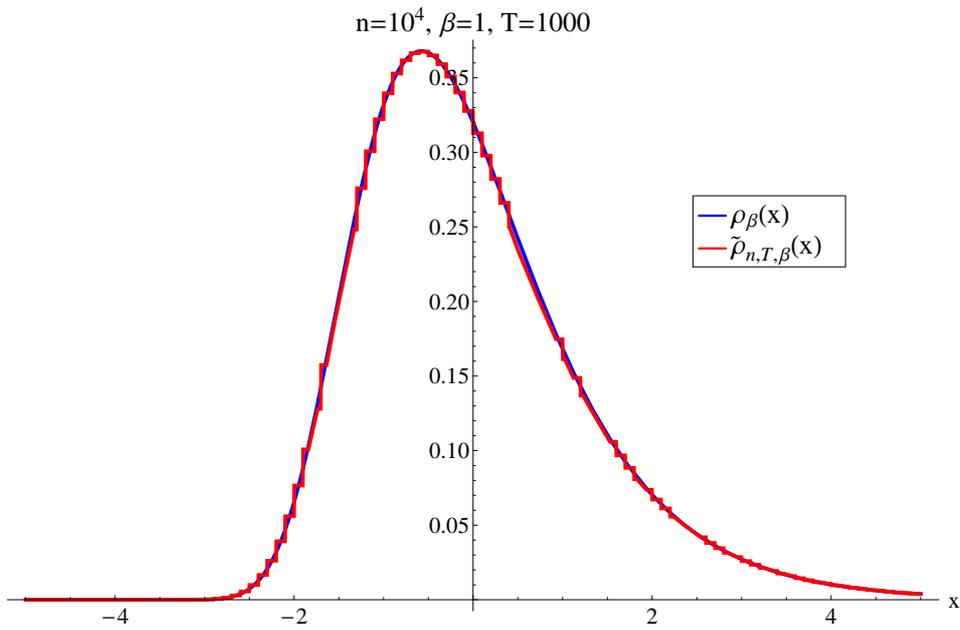}
	\caption{Time average of empirical density histograms (red curve) in the case when $w$ is exponential. Mean field stationary distribution is shown in blue.}\label{fig:exp_time_avg}
\end{figure}
\begin{figure}[ht]
\centering
	\includegraphics[width=0.8\textwidth]{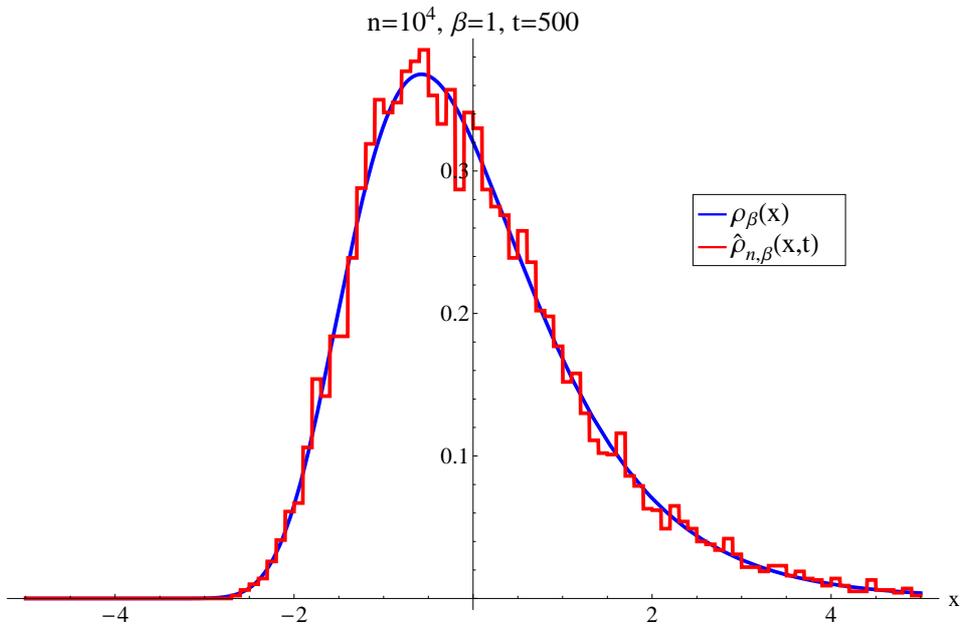}
	\caption{Snapshot of empirical density histogram at time $t = 500$ (red curve) in the case when $w$ is exponential. Mean field stationary distribution is shown in blue.}\label{fig:exp_pillanat}
\end{figure}
\begin{figure}[ht]
\centering
	\includegraphics[width=0.8\textwidth]{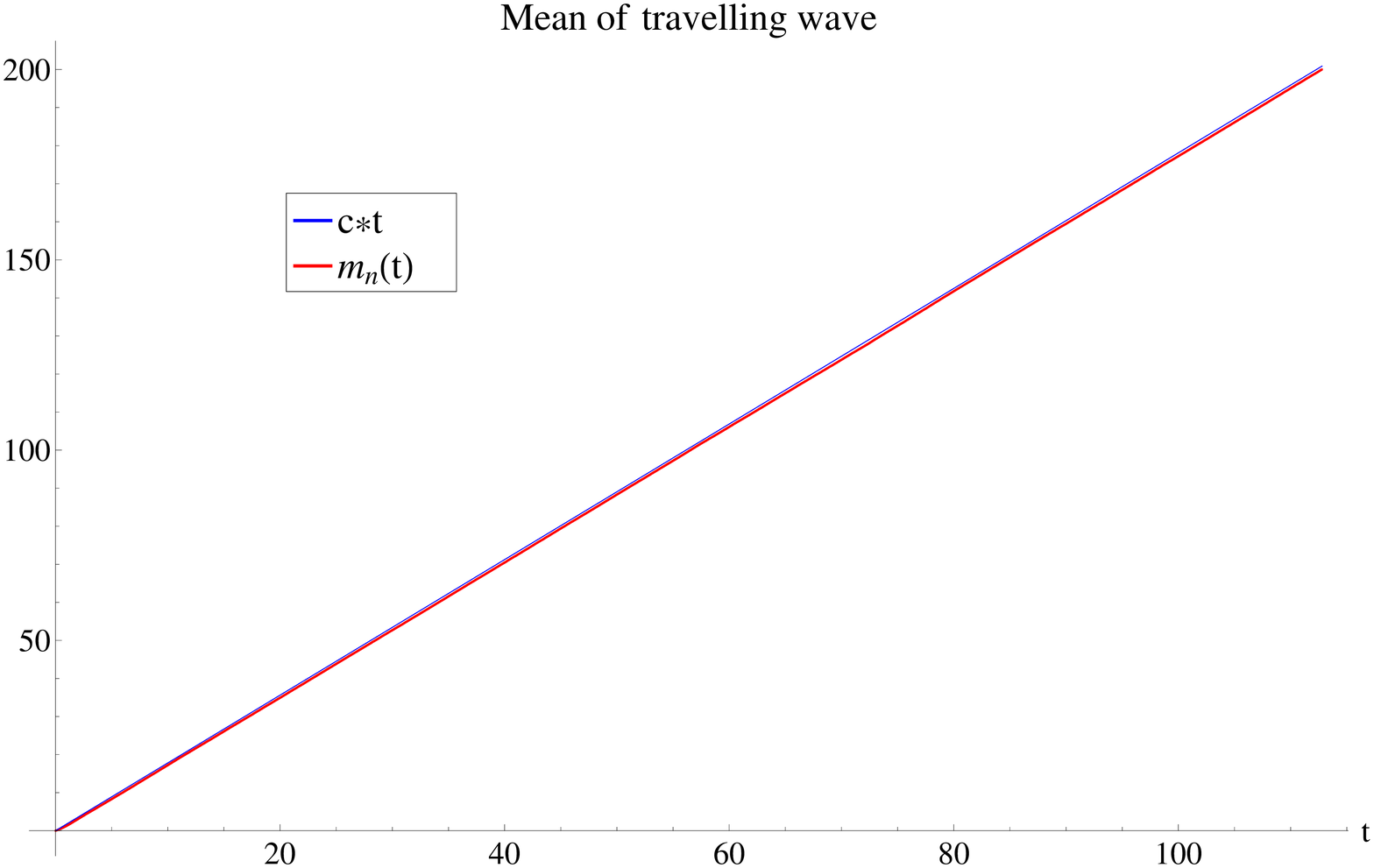}
	\caption{Mean position of the particles in the case when $w$ is exponential. Blue line is from the mean field result, red line is from simulation with $n=10^{4}$ particles.}\label{fig:speed_exp}
\end{figure}
\afterpage{\clearpage}

\subsection{Jump rate function is a step function}

Let us consider the jump rate function analyzed in Example \ref{ex:step} when $w$ is a step function. In this case the stationary mean field density is given by \eqref{eq:stac_step}. Figure \ref{fig:step_time_avg} shows a plot of $\rho_{a,b}$ in blue and the time average of the empirical density histograms $\tilde{\rho}_{n,T,a,b}$ in red with the following parameters: $a = 2, b = 1, n = 10^{4}, a_{0} = -10, a_{1} = 10, T = 1000$. Once again we conclude that the two fit extremely well and note that simulations show the same for different values of the parameters $a$ and $b$. Figure \ref{fig:step_pillanat} shows the empirical density histogram $\hat{\rho}_{n,a,b}\left(\cdot,t\right)$ at time $t = 500$ with the other parameters the same as in Figure \ref{fig:step_time_avg}. Finally, Figure \ref{fig:speed_step} demonstrates that the particle system moves at a constant velocity $c$ predicted by the mean field model.
\begin{figure}[ht]
\centering
	\includegraphics[width=0.8\textwidth]{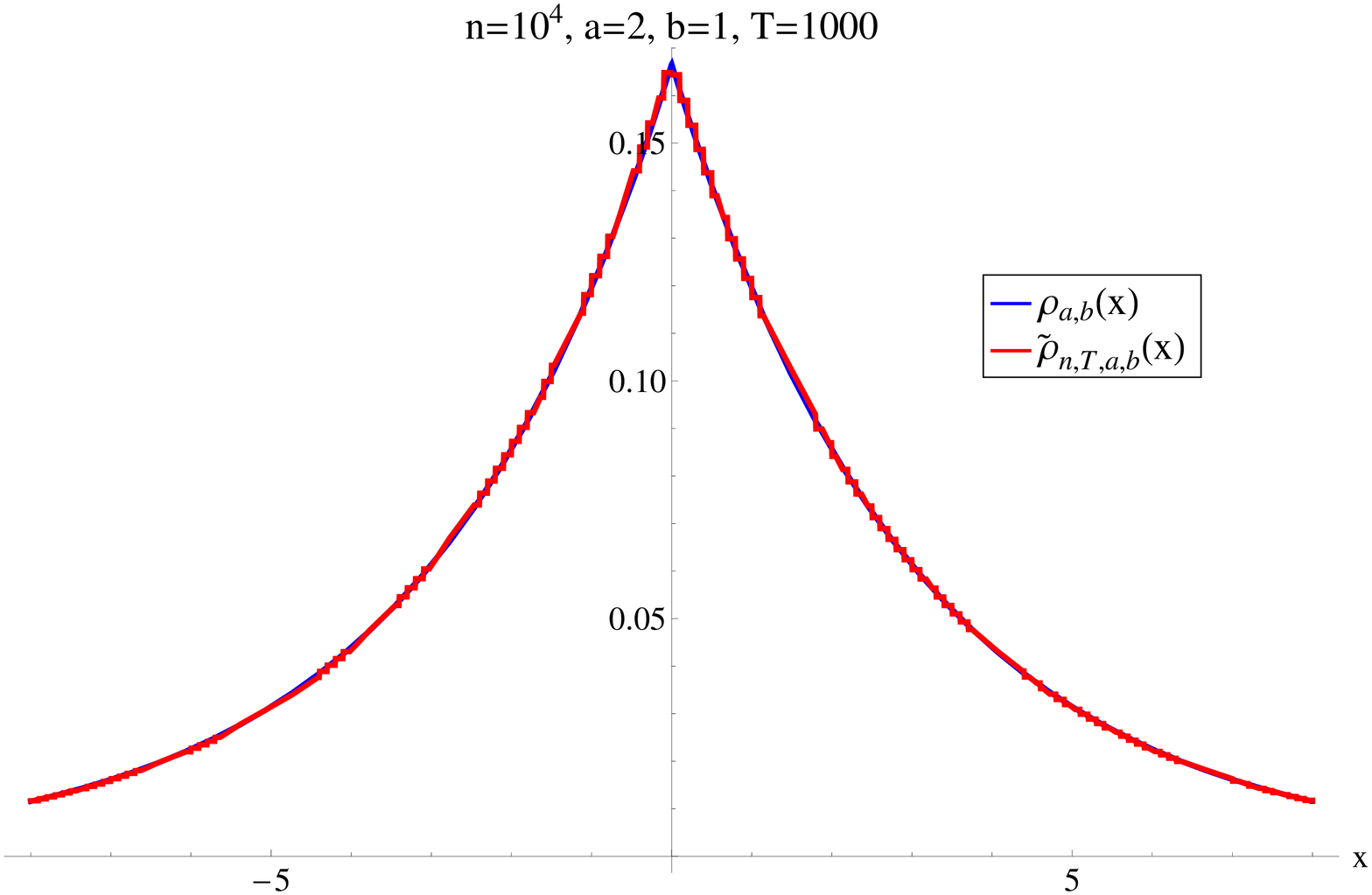}
	\caption{Time average of empirical density histograms (red curve) in the case when $w$ is a step function. Mean field stationary distribution is shown in blue.}\label{fig:step_time_avg}
\end{figure}
\begin{figure}[ht]
\centering
	\includegraphics[width=0.8\textwidth]{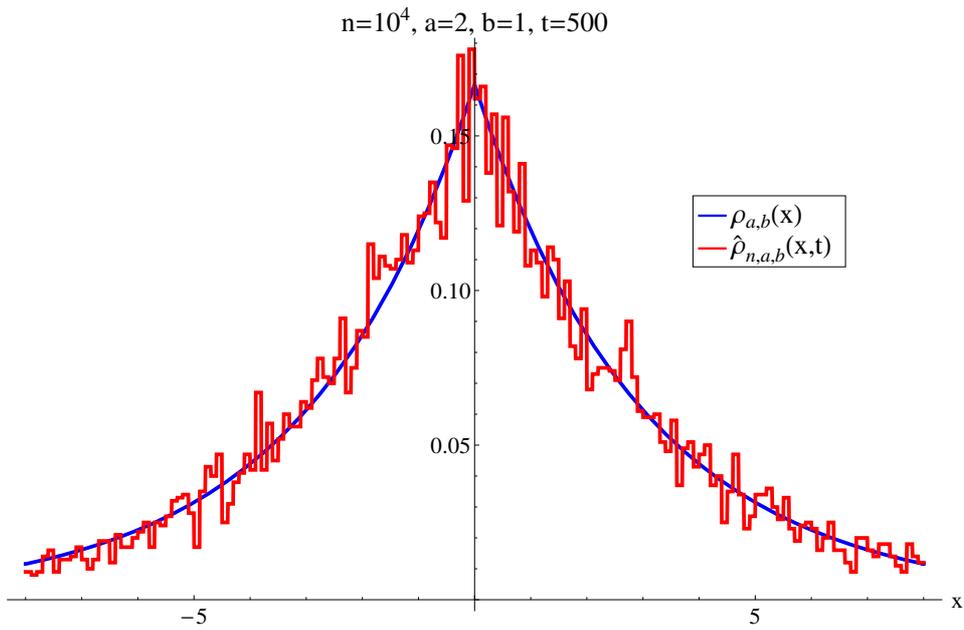}
	\caption{Snapshot of empirical density histogram at time $t = 500$ (red curve) in the case when $w$ is a step function. Mean field stationary distribution is shown in blue.}\label{fig:step_pillanat}
\end{figure}
\begin{figure}[ht]
\centering
	\includegraphics[width=0.8\textwidth]{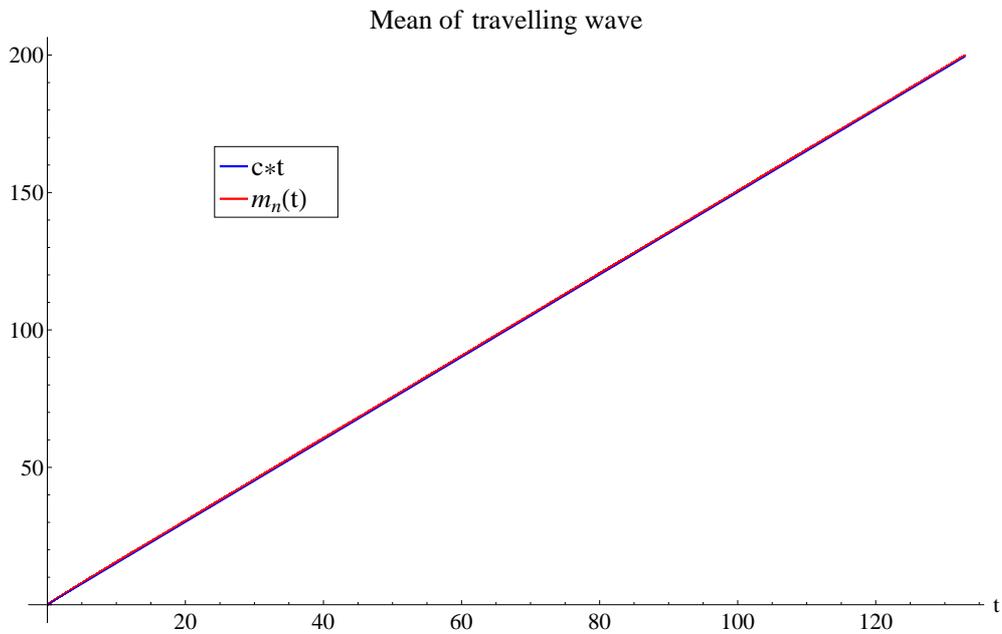}
	\caption{Mean position of the particles in the case when $w$ is a step function. Blue line is from the mean field result, red line is from simulation with $n=10^{4}$ particles.}\label{fig:speed_step}
\end{figure}
\afterpage{\clearpage}

\section*{Acknowledgments}

The authors wish to thank J\'anos Engl\"ander, Attila R\'akos, Bal\'azs R\'ath and Imre P\'eter T\'oth for stimulating discussions and their continuous interest in this project. We also thank Thomas G. Kurtz for bringing to our attention other ways of proving tightness, and Marianna Bolla for helping us with statistics-related problems we encountered during simulations.

% Bibliography
\bibliographystyle{plain}
\bibliography{biblio}

\begin{thebibliography}{10}

\bibitem{arguin2008competing}
L.P. Arguin.
\newblock {Competing particle systems and the Ghirlanda-Guerra identities}.
\newblock {\em Electronic Journal of Probability}, 13:2101--2117, 2008.

\bibitem{arguin2009structure}
L.P. Arguin and M.~Aizenman.
\newblock {On the structure of quasi-stationary competing particle systems}.
\newblock {\em Annals of Probability}, 37(3):1080--1113, 2009.

\bibitem{ballerini2008cir}
M.~Ballerini, N.~Cabibbo, R.~Candelier, A.~Cavagna, E.~Cisbani, I.~Giardina,
  V.~Lecomte, A.~Orlandi, G.~Parisi, A.~Procaccini, M.~Viale, and
  V.~Zdravkovic.
\newblock {Interaction ruling animal collective behavior depends on topological
  rather than metric distance: Evidence from a field study}.
\newblock {\em Proceedings of the National Academy of Sciences}, 105(4):1232,
  2008.

\bibitem{banner2005atlas}
A.D. Banner, R.~Fernholz, and I.~Karatzas.
\newblock {Atlas models of equity markets}.
\newblock {\em Annals of Applied Probability}, 15(4):2296--2330, 2005.

\bibitem{benavraham2007tmr}
D.~Ben-Avraham, S.N. Majumdar, and S.~Redner.
\newblock {A toy model of the rat race}.
\newblock {\em Journal of Statistical Mechanics: Theory and Experiment},
  2007:L04002, 2007.

\bibitem{bertin2005gfgs}
E.~Bertin.
\newblock {Global Fluctuations and Gumbel Statistics}.
\newblock {\em Physical Review Letters}, 95(17):170601, 2005.

\bibitem{billingsley1999convergence}
P.~Billingsley.
\newblock {\em {Convergence of Probability Measures, Second Edition}}.
\newblock Wiley Series in Probability and Statistics, 1999.

\bibitem{chatterjee2008phase}
S.~Chatterjee and S.~Pal.
\newblock {A phase transition behavior for Brownian motions interacting through
  their ranks}.
\newblock {\em Probability Theory and Related Fields}, 147:123--159, 2009.

\bibitem{clusel2008global}
M.~Clusel and E.~Bertin.
\newblock {Global fluctuations in physical systems: a subtle interplay between
  sum and extreme value statistics}.
\newblock {\em International Journal of Modern Physics B}, 22(20):3311--3368,
  2008.

\bibitem{czirok1999cms}
A.~Czir{\'o}k, A.L. Barab{\'a}si, and T.~Vicsek.
\newblock {Collective Motion of Self-Propelled Particles: Kinetic Phase
  Transition in One Dimension}.
\newblock {\em Physical Review Letters}, 82(1):209--212, 1999.

\bibitem{englander2010center}
J.~Engl{\"a}nder.
\newblock {The center of mass for spatial branching processes and an
  application for self-interaction}.
\newblock {\em Electronic Journal of Probability}, 15(63):1938--1970, 2010.

\bibitem{ethier1986markov}
S.N. Ethier and T.G. Kurtz.
\newblock {\em {Markov Processes: Characterization and Convergence}}.
\newblock Wiley, 1986.

\bibitem{feng2006large}
J.~Feng and T.G. Kurtz.
\newblock {\em {Large deviations for stochastic processes}}, volume 131 of {\em
  Mathematical Surveys and Monographs}.
\newblock American Mathematical Society, 2006.

\bibitem{fernholz2002stochastic}
E.R. Fernholz.
\newblock {\em {Stochastic Portfolio Theory}}.
\newblock Springer, 2002.

\bibitem{fernholz2008stochastic}
E.R. Fernholz and I.~Karatzas.
\newblock {Stochastic Portfolio Theory: an Overview}.
\newblock {\em Handbook of Numerical Analysis}, 2008.

\bibitem{gibbs2002choosing}
A.L. Gibbs and F.E. Su.
\newblock {On choosing and bounding probability metrics}.
\newblock {\em International Statistical Review}, 70(3):419--435, 2002.

\bibitem{greenberg1995stochastic}
A.G. Greenberg, V.A. Malyshev, and S.Y. Popov.
\newblock {Stochastic models of massively parallel computation}.
\newblock {\em Markov Processes and Related Fields}, 1(4):473--490, 1995.

\bibitem{greven2007phase}
A.~Greven and F.D. Hollander.
\newblock {Phase transitions for the long-time behaviour of interacting
  diffusions}.
\newblock {\em Annals of Probability}, 35(4):1250--1306, 2007.

\bibitem{grigorescu2008steady}
I.~Grigorescu and M.~Kang.
\newblock {Steady state and scaling limit for a traffic congestion model}.
\newblock {\em ESAIM: Probability and Statistics}, 14:271--285, 2010.

\bibitem{gumbel}
E.J. Gumbel.
\newblock {\em {Statistics of Extremes}}.
\newblock Dover, New York, 1958.

\bibitem{jacod1987limit}
J.~Jacod and A.N. Shiryaev.
\newblock {\em {Limit Theorems for Stochastic Processes, Second Edition}}.
\newblock Springer Berlin, 2003.

\bibitem{kotelenez_kurtz2010macrolim}
P.M. Kotelenez and T.G. Kurtz.
\newblock {Macroscopic limits for stochastic partial differential equations of
  McKean-Vlasov type}.
\newblock {\em Probability Theory and Related Fields}, 146:189--222, 2010.

\bibitem{manita2005asymptotic}
A.~Manita and V.~Shcherbakov.
\newblock {Asymptotic analysis of a particle system with mean-field
  interaction}.
\newblock {\em Markov Processes and Related Fields}, 11:489--518, 2005.

\bibitem{pal2008one}
S.~Pal and J.~Pitman.
\newblock {One-dimensional Brownian particle systems with rank dependent
  drifts}.
\newblock {\em The Annals of Applied Probability}, 18(6):2179--2207, 2008.

\bibitem{perkins1999dawson}
E.A. Perkins.
\newblock {Dawson-Watanabe superprocesses and measure-valued diffusions}.
\newblock {\em Lectures on Probability Theory and Statistics (Saint-Flour,
  1999)}, 1781:125--324, 2002.

\bibitem{ruzmaikina2005characterization}
A.~Ruzmaikina and M.~Aizenman.
\newblock {Characterization of invariant measures at the leading edge for
  competing particle systems}.
\newblock {\em Annals of Probability}, 33(1):82--113, 2005.

\bibitem{shkolnikov2009competing}
M.~Shkolnikov.
\newblock {Competing Particle Systems Evolving by IID Increments}.
\newblock {\em Electronic Journal of Probability}, 14:728--751, 2009.

\bibitem{shkolnikov2010competing}
M.~Shkolnikov.
\newblock {Competing Particle Systems Evolving by Interacting Levy Processes}.
\newblock {\em Arxiv preprint arXiv:1002.2811}, 2010.

\bibitem{shkolnikov2010large}
M.~Shkolnikov.
\newblock {Large systems of diffusions interacting through their ranks}.
\newblock {\em Arxiv preprint arXiv:1008.4611}, 2010.

\bibitem{shkolnikov2011large}
M.~Shkolnikov.
\newblock {Large volatility-stabilized markets}.
\newblock {\em Arxiv preprint arXiv:1102.3461}, 2011.

\bibitem{vicsek1995ntp}
T.~Vicsek, A.~Czir{\'o}k, E.~Ben-Jacob, I.~Cohen, and O.~Shochet.
\newblock {Novel Type of Phase Transition in a System of Self-Driven
  Particles}.
\newblock {\em Physical Review Letters}, 75(6):1226--1229, 1995.

\bibitem{willard2004general}
S.~Willard.
\newblock {\em {General topology}}.
\newblock Dover Publications, 2004.

\end{thebibliography}

\end{document}